\documentclass[a4paper, 12pt]{amsart}
\usepackage{amsmath, amsthm, amscd, amssymb, amsfonts, amsxtra, amssymb, latexsym}
\usepackage{enumerate}
\usepackage{verbatim}
\usepackage{float}
\usepackage{graphicx,epsfig,tikz}
\usepackage{pb-diagram}
\usepackage{scalerel}
\usepackage{tikz}
\usetikzlibrary{arrows.meta}

\usepackage{array}
\usepackage{hyperref}
\hypersetup{colorlinks = true,	allcolors  = blue}

\newcommand{\G}{\Gamma}
\newcommand{\Z}{\mathbb{Z}}

\newcommand{\C}{\mathbb{C}}
\newcommand{\N}{\mathbb{N}}
\newcommand{\ff}{\mathbb{F}}

\newcommand{\sk}{\smallskip}
\newcommand{\msk}{\medskip}

\newtheorem{thm}{Theorem}[section]
\newtheorem{prop}[thm]{Proposition}
\newtheorem{lem}[thm]{Lemma}
\newtheorem{coro}[thm]{Corollary}

\theoremstyle{definition}
\newtheorem{rem}[thm]{Remark}
\newtheorem{exam}[thm]{Example}

\theoremstyle{remark}

\usepackage{color}

\begin{document} \sloppy
	\numberwithin{equation}{section}
	\title[On $k$-UCG's over finite commutative rings]{On $k$-th unitary Cayley graphs over finite \\ commutative rings: 
		structure and decompositions}
	\author[R.A.\@ Podest\'a, D.E.\@ Videla]{Ricardo A.\@ Podest\'a, Denis E.\@ Videla}
	\dedicatory{\today}
	\keywords{Unitary Cayley graphs, finite commutative rings, blow-up, Kronecker product, bipartite, connected}
	\thanks{2020 {\it Mathematics Subject Classification.} Primary 05C25;\, Secondary 05C50, 05C75.}
	\thanks{Partially supported by CONICET and SECyT-UNC}
	
	\address{Ricardo A.\@ Podest\'a, FaMAF--CIEM (CONICET), Universidad Nacional de C\'ordoba, \newline
		Av.\@ Medina Allende 2144, Ciudad Universitaria, (5000) C\'ordoba, Argentina. 
		\newline {\it E-mail: podesta@famaf.unc.edu.ar}}
	\address{Denis E.\@ Videla, FaMAF--CIEM (CONICET), Universidad Nacional de C\'ordoba, \newline
		Av.\@ Medina Allende 2144, Ciudad Universitaria,  (5000) C\'ordoba, Argentina. 
		\newline {\it E-mail: devidela@famaf.unc.edu.ar}}

	\begin{abstract}
		Given $R$ a finite commutative ring with identity and $k\in \N$, we consider the \textit{$k$-th unitary Cayley graph} $G_R(k)=Cay(R,U_{R,k})$ with $U_{R,k}=\{x^k: x \in R^*\}$, and its symmetrized version $\mathcal{G}_R(k) = Cay(R,T_{R,k})$, with $T_{R,k}=U_{R,k} \cup (-U_{R,k})$. 
		If $R$ is a local ring with maximal ideal $\frak m$, we give the blow-up decompositions for the graphs: namely, we have 
		$G_R(k)= (G_{R/\frak m}(k))^{(|\frak m|)}$ and $\mathcal{G}_R(k)= (\mathcal{G}_{R/\frak m}(k))^{(|\frak m|)}$ for any $k$ such that $(k,|R|)=1$. 
		If the ring 
		$R$ has Artin decomposition $R=R_1 \times \cdots \times R_s$  in local rings $R_i$, we give the Kronecker product decompositions $G_R(k) = G_{R_1}(k) \otimes \cdots \otimes G_{R_s}(k)$ and $\mathcal{G}_R(k) = \mathcal{G}_{R_1}(k) \otimes \cdots \otimes \mathcal{G}_{R_s}(k)$. 
		In further $(k,|R|)=1$, these  
		decompositions can be given in terms of generalized Paley (GP) graphs over finite fields, that is  $G_{R_i}(k) = \G(k_i,q_i)$ and similarly for $\mathcal{G}_{R_i}(k)$, for $i=1,\ldots,s$. Also, the reduced graphs correspond to the graphs of the reduced rings, i.e.\@ 
		$\big(G_{R}(k)\big)_{red}\simeq G_{R_{red}}(k)$ and $\big(\mathcal{G}_{R}(k)\big)_{red} \simeq \mathcal{G}_{R_{red}}(k)$.
		By using these decompositions in terms of GP-graphs, we study some basic structural properties of the graphs such as directedness, bipartiteness and connectedness. 
	\end{abstract}

	\maketitle

	\section{Introduction: preliminaries and results} \label{sec: introduction}
	In this work, we focus on a generalization of GP-graphs over finite fields to finite rings, the so-called $k$-th power unitary Cayley graphs over rings, denoted $G_R(k)$, and their symmetrized versions $\mathcal{G}_R(k)$. We study their basic structure (directedness, bipartiteness, connectedness) and decompositions  (blow-ups and Kronecker products).

	\subsection{Preliminaries} \label{subsec: preliminares}
	Here we recall the basic definitions and results of Cayley graphs over rings and of products and blow-ups on arbitrary graphs.

	\subsubsection*{Cayley graphs over rings}
	Let $G$ be a finite commutative group and $S$ a subset of $G$ with $0\notin S$. The \textit{Cayley graph} 
	\begin{equation} \label{eq: XGS}
		X(G,S)
	\end{equation}  
	is the directed graph whose vertex set is $G$ and $v, w \in G$ form a directed edge (or arc) of $\Gamma$ from $v$ to $w$ if $w-v \in S$. Since $0\notin S$, $\Gamma$ has no loops.  Notice that if $S$ is symmetric, that is $-S=S$, then $X(G,S)$ is undirected (and conversely), and hence $X(G,S)$ is $|S|$-regular. In the case that $S$ is not symmetric, one can also consider the undirected graph 	
	$$ X(G,T) \qquad \text{ where } \qquad T = S\cup (-S) $$ 
	is the symmetrization of $S$. 
	The relation between the graphs $X(G,S)$ and $X(G, S \cup (-S))$, even for non-commutative groups $G$, can be found in Theorem 2.4 of \cite{PV19}.

	One interesting instance of this graph is when $G$ is a finite field. In particular,
	if we let $q=p^m$ with $p$ a prime number and $k$ a positive integer, 
	the \textit{generalized Paley graph} (\textit{GP-graph} for short) is the Cayley graph
	\begin{equation} \label{eq: Gkq def}
		\G(k,q) = X(\ff_{q},R_{k}) \qquad \text{with} \qquad R_{k} = \{ x^{k} : x \in \ff_{q}^*\}.
	\end{equation} 
	It is well-known that 
	\begin{equation} \label{eq: Gkq red}
		\G(k,q)=\G(k',q) \qquad \text{where} \qquad k'=\gcd(k,q-1),
	\end{equation} 
	so it is usual to assume that $k\mid q-1$.
	From now on, we will use $(a,b)$ in place of $\gcd(a,b)$ to denote the greatest common divisor of integers $a$ and $b$.
	
	Notice that, for $k\mid q-1$, the graph $\G(k,q)$ is an $n$-regular graph with $n=\tfrac{q-1}{k}$.
	The following properties of $\G(k,q)$ will be useful in the last sections:
	\begin{itemize}
		\item $\G(k,q)$ is undirected either if $q$ is even or if $k \mid \tfrac{q-1}2$ (i.e., $n$ is even) when $p$ is odd. 
		\medskip 
		
		\item $\G(k,q)$ is connected if $n$ is a primitive divisor of $q-1$.  
	\end{itemize}
	When $k=1$ we get the complete graph $\G(1,q)=K_q$ and when $k=2$ we get the classic Paley graph $\Gamma(2,q) = P(q)$. The graphs $\G(3,q)$ and $\G(4,q)$ are also of interest (see \cite{PV8}).
	
	GP-graphs have been extensively studied in the few past years. 
	Lim and Praeger studied their automorphism groups and characterized all GP-graphs which are Hamming graphs 
	(\cite{LP}). 
	Also, Pearce and Praeger characterized all GP-graphs which are Cartesian decomposable (\cite{PP}); some extensions to the directed case can be found in \cite{PV7}.
	The number of walks in GP-graphs is related with the number of solutions of 
	diagonal equations over finite fields (\cite{V}). 
	Under some mild restrictions, the spectrum of GP-graphs determines the weight distribution of their associated irreducible
	codes (\cite{PV4}). Furthermore, GP-graphs can also be seen as particular regular maps in Riemann surfaces 
	(\cite{J}, \cite{JW}).

	Another important special case of Cayley graphs is obtained when $G$ is a finite commutative ring with identity $R$ and $S$ is its group of units $R^*$. That is 
	\begin{equation} \label{GR} 
		G_R = X(R,R^*),
	\end{equation} 
	called the \textit{unitary Cayley graphs}.
	Unitary Cayley graphs were studied for instance in \cite{Ak+}, \cite{Minac}, \cite{Il}, \cite{Ki+}, \cite{LZ}, \cite{Nguyen} and \cite{PV5}, where some invariants (diameter, girth, chromatic numbers) and both structural aspects (automorphisms, connectedness, bipartiteness, primeness) and spectral features (spectrum, energy, equienergy, isospectrality) were treated. 
	Recently, Liu and Zhou \cite{LZ2} defined the \textit{quadratic unitary Cayley graphs} 
	\begin{equation} \label{GR quadratic} 
		\mathcal{G}_R = X(R,T_{R}),
	\end{equation} 
	where 
	$T_R=Q_{R} \cup (-Q_R)$ and $Q_{R}=\{x^2: x\in R^*\}$ 
	with $R$ a finite commutative ring with identity of odd characteristic. Under some conditions, they showed that in most of the cases these graphs are Kronecker products of classic Paley graphs $P(q)$ and the pseudograph $\mathring{K}_m$ resulting by attaching loops to all of the vertices of a complete graph (see Theorems 2.3 and 2.5 in \cite{LZ2}). This was shown by de Beaudrap in the case $R=\mathbb{Z}_n$ (see \cite{Be}).
	This decomposition allowed the author to determine the spectra, energy, and other interesting properties of $\mathcal{G}_R$.
	
	Here, we will extend the definition and results of Liu and Zhou to all powers $k\ge 2$.
	\goodbreak 
	
	\subsubsection*{Kronecker products of graphs and blow-ups} \label{subsec: blowups}
	The \textit{Kronecker product} of graphs $\G_1,\ldots, \G_s$, 
	$$\G=\G_1\otimes \cdots\otimes \G_s,$$ 
	is the graph with vertex set $V(\Gamma)=V(\G_1) \times \cdots \times V(\G_s)$ 
	where two vertices $v=(v_1,\ldots,v_s)$ and $w=(w_1,\ldots,w_s)$ 
	are neighbors in $\Gamma$ if and only if $v_i$ and $w_i$ are neighbors in the graph $\G_i$ for all $i=1,\ldots,s$.
	This product is associative and commutative, that is
	$$ \G_1 \otimes (\G_2 \otimes \G_3) = \G_1 \otimes \G_2 \otimes \G_3 = (\G_1 \otimes \G_2) \otimes \G_3 \qquad \text{and}
	\qquad \G_1 \otimes \G_2 = \G_2 \otimes \G_1 $$ 
	for any graphs $\G_1, \G_2, \G_3$.
	The Kronecker product extends naturally to pseudographs (directed graphs or graphs with loops). 
	In this case, notice that $\G_1 \otimes \G_2$ is loopless if one of $\G_1$, $\G_2$, is loopless, and $\G_1\otimes \G_2$ is directed if one of $\G_1$, $\G_2$, is directed.

	For any graph $\G$ and $t\in \N$, the \textit{(balanced) blow-up of order $m$} of $\G$, denoted 
	$\G^{(m)}$, 
	is the graph obtained by replacing each vertex $x$ of $\G$ by a set $V_x$ of $m$ independent vertices and every edge $\{x, y\}$ of $\G$ by a complete bipartite graph $K_{m,m}$ with parts $V_x$ and $V_y$. 
	Note that, we have the natural isomorphism 
	\begin{equation} \label{eq: blowup}
		\G \otimes \mathring{K}_m \simeq \G^{(m)},
	\end{equation}
	where $\mathring{K}_m$ is the graph obtained from the complete graph $K_m$ by attaching a loop to each of its vertices.

	\subsection{Two families of Cayley graphs}
	Here we introduce two new families of Cayley graphs over rings whose connection sets are defined in terms of the $k$-th powers of units. 
	
	\subsubsection*{Unitary Cayley graphs of $k$-th powers}
	Let $k\in\mathbb{N}$ and let $R$ be any commutative ring with identity. We consider the \textit{$k$-th unitary Cayley graph} or (\textit{$k$-UCG} for short) by 
	\begin{equation} \label{eq: GRk}
		G_R(k) = X(R,U_{R,k}) \qquad \text{where} \qquad U_{R,k} = \{x^k: x\in R^*\}
	\end{equation} 
	(previously defined in \cite{PV9}). We also define the \textit{symmetrized $k$-th unitary Cayley graph} as
	\begin{equation} \label{uGRk}
		\mathcal{G}_R(k) = X(R,T_{R,k}) \qquad \text{where} \qquad T_{R,k} = U_{R,k} \cup (-U_{R,k}),
	\end{equation}
	generalizing the definition given by Liu and Zhou for $k=1,2$ in \cite{LZ} and \cite{LZ2}.
	
	Although in 
	\eqref{eq: GRk} and \eqref{uGRk} the ring $R$ is general, from now on we will only consider finite commutative rings $R$ (except for an isolated example).
	Clearly, $G_R(1)=\mathcal{G}_R(1)$ is the unitary Cayley graph $G_R$ in \eqref{GR} and $\mathcal{G}_R(2)$ is the quadratic unitary Cayley graph $\mathcal{G}_R$ in \eqref{GR quadratic}. 
	Notice that $G_R$ is a spanning subgraph of $\mathcal{G}_R(k)$, i.e.\@ a subgraph having all the vertices. 
	We will sometimes refer to the graphs $G_R(k)$ and $\mathcal{G}_R(k)$ indistinctly as $k$-UCGs.
	For simplicity, when no confusion can arise, we will write $U_{k}$, $T_{k}$ instead of $U_{R,k}$, $T_{R,k}$, respectively.
	Also, we will refer to the graphs $G_R$ and $\mathcal{G}_R$ where we want to consider the whole family $G_R(k)$ and $\mathcal{G}_R(k)$ for all $k\in \N$.
	
	In general, $G_{R}(k)$ is directed and $\mathcal{G}_{R}(k)$ is undirected.
	Notice that $U_{R,k}$ is a symmetric set if and only if $-1\in U_{R,k}$, and in this case we clearly have that $G_{R}(k)=\mathcal{G}_{R}(k)$. 
	In particular, when $R$ is a finite field $\ff_q$ of cardinality $q=p^n$ then $G_R(k)$ is the generalized Paley graph 
	\begin{equation} \label{eq: GPkq}
		G_{\ff_q}(k) = \G(k,q) := X(\ff_q,\{x^k:x\in \ff_q^*\}).
	\end{equation}
	We will use the notation $G_{\ff_q}(k)$ in the general situation, while the notation $\G(k,q)$ will be used when it is understood that $k \mid q-1$.
	See \cite{PV3} (also \cite{PV4}, \cite{PV5}) for more information and results on the graphs $\G(k,q)$.

	\subsection{Outline and main results}
	Briefly, in Sections \ref{sec: reduction} and \ref{sec: rel entre GRk y mathcalGRk} we study reductions of the power $k$ and how to express the symmetrized (undirected) graphs $\mathcal{G}_R$ in terms of the graphs $G_R$. In Section \ref{sec: blowups} to \ref{sec: products of symmetrized GRks} we study two decompositions of the graphs $\mathcal{G}_R(k)$ and $G_R(k)$: blow-ups decompositions when $R$ is local, and Kronecker product decompositions when $R=R_1 \times \cdots \times R_s$ a product of local rings. Finally, in Sections \ref{sec: bipartiteness} and \ref{sec: connectivity} we study bipartiteness and connectedness of $G_R(k)$ and $\mathcal{G}_R(k)$ for any finite commutative ring $R$.

	We now explain in more detail what we do in this work. 
	In Sections \ref{sec: reduction}-\ref{sec: blowups}, $R$ will be a finite commutative local ring, while in Sections \ref{sec: product decompositions for GRk}-\ref{sec: connectivity}, a general finite commutative ring. 
	
	In the first two sections after the Introduction, we study the graphs $G_R(k)$ for $R$ local. In Section \ref{sec: reduction}, 
	we reduce the power $k$. More precisely, in Proposition \ref{prop: reduction GRk GRk'}, we show how to choose $k' \le k$ such that 		
	$$G_R(k)=G_R(k').$$  
	In Section \ref{sec: directedness}, we give necessary and sufficient conditions for $G_R(k)$ to be (un)directed when $R$ is of odd characteristic. 
	
	The aim of Section \ref{sec: rel entre GRk y mathcalGRk} is to express the graphs $\mathcal{G}_R$ in terms of the graphs $G_R$, when $R$ is of odd characteristic. First, in \S \ref{subsec:4.1} we obtain a 2-adic reduction of $G_R(k)$ by dividing $k$ by powers of 2 (see Proposition \ref{prop: Urk/2 = 2URk}). Then, in Theorem \ref{teo: Gk and Gk2} (our first main result) we show that
	$$ \mathcal{G}_R(k)=G_R(k) \qquad \text{or} \qquad \mathcal{G}_R(k) = G_R(\tfrac{k}{2^N}) $$ 
	if $G_R(k)$ is undirected or directed, respectively, where 
	$$ N=v_2(k)-v_2(q-1)+1$$ 
	and $v_2 $ denotes the 2-adic valuation of an integer.
	
	In Section 5 we study balanced blow-up decompositions for $G_R(k)$ and $\mathcal{G}_R(k)$ for $R$ a local ring, under the extra assumption $(k,|R|)=1$. 
	In Theorem \ref{teo: blowup GRk}, we recall the result 
	$$ G_R(k) \simeq {G_{\ff_q}(k)}^{(m)} $$ 
	obtained in \cite{PV9}. In Theorem \ref{teo: blowup mathcalGRk} (our second main result) we obtain the balanced blow-up decomposition 
	$$ \mathcal{G}_R(k) \simeq \mathcal{G}_{\ff_{q}}(k)^{(m)}.$$ 
	Furthermore, if $R$ has odd characteristic we have  
	$\mathcal{G}_R(k) \simeq {G_{\ff_q}(k)}^{(m)}$ if $v_{2}(k)< v_{2}(q-1)$ and 
	$\mathcal{G}_R(k) \simeq {G_{\ff_q}(\tfrac{k}{2^N})}^{(m)}$ if $v_{2}(k)\ge v_{2}(q-1)$, where $N$ is as above.
	
	In the next two sections we give Kronecker product decompositions for the graphs $G_R$ and $\mathcal{G}_R$ in the case $R$ is a finite commutative ring having Artin's decomposition 
	$$ R=R_1 \times \cdots \times R_s$$ 
	with $R_i$ local rings for $i=1,\ldots, s$. 
	We first show in Proposition \ref{prop: Kronecker GRk} that the Artin's decomposition of $R$ pass straight through a product of graphs, that is
	$$ G_{R_1 \times \cdots \times R_s}(k) \simeq G_{R_1}(k) \otimes \cdots \otimes G_{R_s}(k). $$ 
	However, the same product decomposition for the symmetrized graphs $\mathcal{G}_R$ is only valid with an extra assumption: in Theorem \ref{thm: conditions for mathcalGRk Kronecker} we show that 
	$$ \mathcal{G}_{R}(k) \simeq \mathcal{G}_{R_1}(k) \otimes \cdots \otimes \mathcal{G}_{R_s}(k) \quad \Leftrightarrow \quad -1\not \in U_{R_j,k} \quad \text{for at most one $j\in I_s$}$$
	where 	
	$$ I_s = \{1,\ldots,s\}. $$ 
	Furthermore, under the extra simple hypothesis $(k,|R|)=1$, which is not too restrictive, we can give product decompositions of the graphs $G_R$ and $\mathcal{G}_R$. These are the contents of Theorems \ref{teo: GRk tensor GRki's caso no local} and \ref{thm: product decomposition for mathcal GRk} (third and fourth main results) where we obtain that
	$$	G_{R}(k) \simeq \big( \bigotimes_{i=1}^s \G(k_i,q_i) \big) \otimes \mathring{K}_m \simeq G_{R_{red}}(k) \otimes 			\mathring{K}_m  \simeq (G_{R_{red}}(k))^{(m)},$$
	where $k_i=(k,q_i-1)$ for $i\in I_s$, $m$ is the size of the nilradical $\mathcal{N}_R$ of $R=R_1 \times \cdots \times R_s$ 
	and where $R_{red} = \ff_{q_1} \times \cdots \times \ff_{q_s}$ is the reduced ring of $R$; and also that 
	$$ 	\mathcal{G}_{R}(k) \simeq \big( \bigotimes_{i=1}^s \mathcal{G}_{\ff_{q_i}}(k) \big) \otimes \mathring{K}_m \simeq \mathcal{G}_{R_{red}}(k) \otimes \mathring{K}_m \simeq (\mathcal{G}_{R_{red}}(k))^{(m)}$$ 
	if and only if there is at most one $j \in I_s$ such that $-1 \notin U_{R_j,k}$. 
		Moreover, in Theorems \ref{thm: GR_red=G_Rred} and \ref{thm: symmetrized GR_red=G_Rred}, we show that if $(k,|R|)=1$, the reduced $k$-th unitary Cayley graphs associated to $R$ are the $k$-th unitary graphs associated to the reduced ring $R_{red}$, that is 
		$$ (G_R(k))_{red} \simeq G_{R_{red}}(k) \qquad \text{and} \qquad (\mathcal{G}_R(k))_{red} \simeq \mathcal{G}_{R_{red}}(k)	.$$ 
	
	In the last two sections, using the previous decompositions, we study conditions for the graphs $G_R$ and $\mathcal{G}_R$ to be bipartite and connected. The results in these sections are more laborious to obtain.  
	
	In Section \ref{sec: bipartiteness}, using the Kronecker product decompositions above, in Theorem \ref{teo: GR bip} (our fifth main result) we give conditions for $G_R(k)$ and $\mathcal{G}_R(k)$ to be bipartite.  
	In fact, the theorem shows that these graphs are almost always non-bipartite. More precisely, $G_R(k)$ is bipartite if and only if there is at least one index $i\in I_s$ such that  $G_{\ff_{q_i}}(k)$ equals 
	\begin{equation} \label{eq: bip condition intro}
		\G(2^{t_i}-1,2^{t_i}) \simeq K_2 \sqcup \cdots \sqcup K_2 \quad (\text{$2^{t_i-1}$ copies}),	
	\end{equation} 
	where $q_i=2^{t_i}$ and $(k,q_i-1)=2^{t_i}-1$.
	If further, there is at most one index $j \in I_s$ such that $-1\not \in U_{R_j,k}$ then, $\mathcal{G}_R(k)$ is bipartite if and only if  there is some index $i \in I_s$, such that 
	$\mathcal{G}_{\ff_{q_i}}(k)$ equals \eqref{eq: bip condition intro}.
	
	To conclude, in Section \ref{sec: connectivity}, after a detailed study of connectedness of Kronecker products of arbitrary graphs (considering separately the cases undirected, directed and mixed) and the Kronecker product decompositions above, we can give results on the connectedness of the graphs $G_R$ and $\mathcal{G}_R$. 
	In Theorem \ref{teo: GR connected} (our last main result) we show that under the hypothesis $(k,|R|)=1$ and some extra mild arithmetic conditions,  
	$G_{R}(k)$ is (strongly) connected if and only if there is no two indices $i,j\in I_s$ such that 
	$$G_{R_{i}/\frak m_i}(k) \simeq \G(p-1,p) \simeq G_{R_{j}/\frak m_j}(k)$$ 
	for some prime $p$. 
	Finally, in Theorem \ref{teo: GR sim conn} 
	we relate the connectedness of $G_R$ with the one of $\mathcal{G}_R$. 
	Under the same hypothesis of Theorem \ref{teo: GR connected}, we get that 
	$$ \mathcal{G}_{R}(k) \text{ is connected} \qquad \Leftrightarrow \qquad G_{R}(k) \text{ is (strongly) connected} $$
	if there is at most one index $j \in I_s$ such that $-1 \notin U_{R_j,k}$.
	
	As a final comment, we want to point out the fact that although the symmetrized graphs $\mathcal{G}_R$ are always undirected, all the results for $\mathcal{G}_R$ are more involved than the corresponding ones for $G_R$, needing more hypotheses --as a general rule-- to get a similar result.

	\section{Reduction of $k$ in $G_R(k)$, for $R$ finite local}  \label{sec: reduction}
	In this section, we show that for the graphs $G_R(k)$ with 
	$$ \text{$R$ a \textsl{finite commutative local ring}}$$ 
	one can always reduce the power $k$, that is, there exist a $k'\le k$ such that $G_R(k) = G_R(k')$. 
	The details are the content of the next proposition.
	
	\begin{prop} \label{prop: reduction GRk GRk'}
		Let $(R, \frak{m})$ be a finite commutative local ring with residue field $R/\frak{m}~\simeq~\ff_{q}$ with
		$q$ a power of a prime $p$ and let 
		$$k=p^{h}\ell \in \N$$ 
		with $h\ge 0$ and $\ell \ge 1$ such that $(\ell,p)=1$.
		If $|\frak{m}|=p^{t}$, then 
		\begin{equation} \label{eq: GRk=GRk'}
			G_R(k) = G_R(k') \qquad \text{with} \qquad k'=p^{\min\{t,h\}}\ell' \le k
		\end{equation}	
		where $\ell' = (\ell,q-1)$. 
	\end{prop}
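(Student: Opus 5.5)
The plan is to show directly that the connection sets coincide, $U_{R,k}=U_{R,k'}$, by looking at the abstract structure of the finite abelian group $R^*$ and at how power maps act on it. Since $G_R(k)$ and $G_R(k')$ are Cayley graphs on the same group $(R,+)$, equality of connection sets gives equality of graphs. The inequality $k'\le k$ is immediate: $\min\{t,h\}\le h$ and $\ell'=(\ell,q-1)\le \ell$.

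\emph{Step 1: the structure of $R^*$.} Reduction modulo $\frak m$ gives a surjective homomorphism $R^*\to \ff_q^*$. It is surjective because $R$ is local, so any lift of a nonzero residue is a unit. Its kernel is $1+\frak m$, which has $|\frak m|=p^t$ elements. Hence $|R^*|=(q-1)p^t$ with $(q-1,p)=1$. By the primary decomposition of finite abelian groups,
$$R^*=C\times P,$$
where $P$ is the subgroup of elements of $p$-power order and $C$ is the subgroup of elements of order dividing $q-1$. Thus $|P|=p^t$ and $|C|=q-1$. The group $1+\frak m$ has order $p^t$, so it lies in $P$, and therefore $P=1+\frak m$. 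Moreover $C\simeq R^*/P\simeq \ff_q^*$ is cyclic of order $q-1$. This avoids any need for Teichmüller lifts. I expect this identification to be the only step requiring care, and the coprimality of the orders handles it.

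\emph{Step 2: computing the images of the power maps.} For $n\in\N$ the map $x\mapsto x^n$ respects the decomposition, so
$$U_{R,n}=C^n\times P^n,$$
where $C^n=\{c^n : c\in C\}$ and similarly for $P$. Now compare the two components for $n=k$ and $n=k'$.

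For the cyclic group $C$ of order $q-1$, we have $C^n=C^{(n,q-1)}$. Since $(p,q-1)=1$, both $(k,q-1)$ and $(k',q-1)$ equal $(\ell,q-1)=\ell'$. Hence $C^k=C^{k'}$.

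For the $p$-group $P$, raising to a power coprime to $p$ is an automorphism. So $P^k=P^{p^h}$ and $P^{k'}=P^{p^{\min\{t,h\}}}$, since $\ell$ and $\ell'$ are both prime to $p$. If $h\le t$ these are literally the same set. If $h>t$, the exponent of $P$ divides $|P|=p^t$, so $P^{p^h}=\{1\}=P^{p^t}$.

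Combining the two components gives $U_{R,k}=U_{R,k'}$, and hence $G_R(k)=G_R(k')$.
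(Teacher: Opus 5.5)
Your proof is correct. Its core is the same splitting the paper uses, $R^*\simeq(R/\frak{m})^*\times(1+\frak{m})$ (the map $\Psi$ in \eqref{eq: Psi isomorfismo}), but you deploy and justify it differently.

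The paper treats the two parts of $k$ separately. It first gets $U_{R,\ell}=U_{R,\ell'}$ by appealing to the blow-up theorem of \cite{PV9} (Theorem~\ref{teo: blowup GRk}), which ultimately rests on the coset bijection of Lemma~\ref{lem: m-cosets}. It then pushes this through the $p$-power map to obtain $U_{R,k}=U_{R,p^h\ell'}$, which settles $h<t$. Only for $h\ge t$ does it bring in $\Psi$, built from a multiplicative section $s$ of $R\to R/\frak{m}$ whose existence is asserted without proof.

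You instead run everything through one primary decomposition $R^*=C\times P$. Both the prime-to-$p$ reduction $\ell\mapsto\ell'$ and the truncation of the $p$-exponent then fall out componentwise. Your coprimality argument ($|1+\frak{m}|=p^t$, $(p,q-1)=1$) is exactly what produces the section the paper takes for granted: $s$ is the inverse of the isomorphism $C\to\ff_q^*$. You also never need the normalization $s([-1])=-1$, $s([1])=1$ that the paper imposes. That normalization cannot hold when $q$ is even but $-1\neq 1$ in $R$ (e.g.\ $R=\Z_4$), although it is harmless for this proposition.

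The paper's route ties the reduction to the coset structure behind the blow-up results of Section~\ref{sec: blowups}. Your computation recovers that structure too: for $(\ell,p)=1$ one has $P^{\ell}=P=1+\frak{m}$, so $U_{R,\ell}=C^{\ell'}(1+\frak{m})$ is a union of cosets of $\frak{m}$. Your version is therefore shorter and fully self-contained, at no cost in information.
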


	\begin{proof}
		It is enough to show that the connection sets of the graphs are equal, that is 
		$$ U_{R,k} = U_{R,k'}. $$ 
		
		Let $k=p^{h}\ell$, with $(p,\ell)=1$.
		Since $(\ell,|R|)=1$, as a consequence of Theorem 2.3 in \cite{PV9}, 
		if we put $\ell'=(\ell,q-1)$ then we have that 
		\begin{equation}\label{eq: Ul Ul'}
			U_{R,\ell} = U_{R,\ell'}. 
		\end{equation}
		
		Consider the map $F_p$ on $R$ defined by 
		$F_{p}(x) = x^{p}$.
		Thus, it is clear that 
		$$ (F_{p})^{h}(x) = x^{p^{h}}. $$ 
		Also, for any $u\in R$ we have that $u^{k}=(u^{\ell})^{p^{h}}$. 
		Therefore, by using \eqref{eq: Ul Ul'} we obtain
		\begin{equation} \label{eq: URk=URphl}
			U_{R,k} = U_{R,p^{h} \ell} = (F_{p})^{h} \big( U_{R,\ell}\big) = (F_{p})^{h} \big( U_{R,\ell'}\big) = U_{R,p^{h}\ell'}.
		\end{equation}
		
		Now, by \eqref{eq: URk=URphl} the statement in the proposition is obvious when $h<t$.
		Thus, let us assume that $h\ge t$. Hence, it is enough to show that $G_{R}(k)=G_{R}(p^{t} \ell')$ in this case. 
		
		Let $s: R/\frak{m} \rightarrow R$ be a section of the quotient map $\pi: R \rightarrow R/\frak{m}$, such that 
		$$ s([-1]) = -1 \qquad \text{and} \qquad s([1])=1. $$ 
		Since $\pi \circ s =Id$, the short exact sequence 
		$$
		1 \rightarrow (R/\frak{m})^* \stackrel{s}{\rightarrow} R^* \stackrel{r}{\rightarrow} 1+\frak{m} \rightarrow 1 
		$$
		splits, where the morphism $r$ is given by $r(u) = s([u])^{-1}u$. 
		In particular, we  have that $s([u])^{-1} u \in 1+ \frak{m}$.
		Thus, the splitting short exact sequence induces the isomorphism 
		\begin{equation} \label{eq: Psi isomorfismo}
			\Psi: R^* \rightarrow (R/\frak{m})^* \times (1+\frak{m}), \qquad \Psi(u)=([u],s([u])^{-1}u).
		\end{equation}	
		
		Notice that $1+\frak{m}$ is a commutative ring of size $p^{t}$. 
		By taking into account that $u^{k}=(u^{p^h})^{\ell}$, if $h\ge t$, then $c^{k}=1$ for any $c\in 1+\frak{m}$ due to the Lagrange Theorem. In particular, we have that  
		$$ (s([u])^{-1}u)^{k} = 1 $$ 
		for any $u \in R$. 
		Then, the image of $U_{R,k}$ under the isomorphism $\Psi$ is $U_{\ff_q,k}\times\{1\}$, and in the same way we get that $\Psi(U_{R,p^{t}\ell}) = U_{\ff_{q},p^{t}\ell}\times \{1\}$. 
		
		Next, since $\Psi_{p}(x)=x^p$ is a field automorphism on $\ff_q$, by \eqref{eq: URk=URphl} we have that 	
		$$ U_{\ff_q,k} = U_{\ff_q,\ell} = U_{\ff_{q},p^{t}\ell} \qquad \text{and} \qquad U_{\ff_q,\ell}=U_{\ff_q,\ell'}.$$  
		Hence $U_{\ff_q,k} = U_{\ff_q,\ell'}$, showing that $G_{R}(k)=G_{R}(p^{t} \ell')$.
		
		Putting the cases $h<t$ and $h\ge t$ together, we have shown that $G_R(k) = G_R(k')$ with $k'=p^{h'} \ell'$,
		where $h'=\min\{t,h\}$, as asserted. The fact that $k'\le k$ is clear by the definition of $k'$, and we are done.	
	\end{proof}

	\begin{rem}
		In the proposition, we have that $R=\ff_q$ is a finite field if and only if $|\frak{m}|=1$, that is, if and only if $t=0$. In this case, we re-obtain the known reduction in \eqref{eq: Gkq red} for GP-graphs $\G(k,q)$.
	\end{rem}

	It is interesting to study this reduction in more detail for small values of $p$, and we leave the details of this to the reader. 
	However, we consider a case where the reduction can be done easily in general, i.e., for 
	commutative local rings having order $p^2$.

	\begin{coro}\label{coro: red res p} 
		Let $R$ be a commutative local ring of order $p^2$ with $p$ prime, i.e.\@ $R=\Z_{p^2}$ or $R=\ff_p[x]/(x^2)$.
		Then, given $k\in \N$ we have that 
		$$ G_R(k)=G_R(k') \quad \text{for some} \quad k'\le p(p-1). $$ 
		More precisely, if $\ell \in \N$ is such that $(\ell,p)=1$ and we put $\ell'=(\ell,p-1)$, then we have 
		$$G_R(\ell) =G_R(\ell') \qquad \text{and} \qquad G_R(p^h \ell) =G_R(p\ell') \quad \text{for any } h\in \N.$$
		In particular, for any $k$ coprime with $p$ and $p-1$ we have that $G_R(k) = G_R(1)$. 
	\end{coro}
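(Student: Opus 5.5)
This is a direct specialization of Proposition \ref{prop: reduction GRk GRk'}, so the plan is to identify the parameters $q$ and $t$ for a local ring of order $p^2$ and read off the reduced exponent.

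First I justify the parenthetical claim that $R=\Z_{p^2}$ or $R=\ff_p[x]/(x^2)$. A finite local ring of order $p^2$ that is not a field has maximal ideal $\frak m$ with $|\frak m|=p$, because $|R|=|R/\frak m|\,|\frak m|$. Hence the residue field is $\ff_p$ and $\frak m^2=0$. If the characteristic is $p^2$ we get $\Z_{p^2}$; otherwise $R=\ff_p[\varepsilon]$ with $\varepsilon^2=0$. This classification is standard and is not really needed for the argument below.

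The only thing that matters is that $q=p$ and $t=1$. The case $R=\ff_{p^2}$ is a field with $t=0$; I would exclude it, since the statement is clearly meant for the two non-field rings.

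Now write $k=p^h\ell$ with $(\ell,p)=1$. Proposition \ref{prop: reduction GRk GRk'} gives $G_R(k)=G_R(p^{\min\{1,h\}}\ell')$ with $\ell'=(\ell,p-1)$.

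If $h=0$, this reads $G_R(\ell)=G_R(\ell')$.

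If $h\ge 1$, then $\min\{1,h\}=1$, so $G_R(p^h\ell)=G_R(p\ell')$.

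In both cases $k'\le p\ell'\le p(p-1)$, since $\ell'\mid p-1$. This gives the first assertion.

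For the final claim, suppose $k$ is coprime to both $p$ and $p-1$. Then $h=0$ and $\ell=k$, so $\ell'=(k,p-1)=1$, and therefore $G_R(k)=G_R(1)$.

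There is no real obstacle here. The only point needing care is checking that $t=1$, so that the $p$-part of the exponent collapses to at most $p^1$.
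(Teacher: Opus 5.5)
Your proof is correct and follows the paper's own argument: specialize Proposition~\ref{prop: reduction GRk GRk'} with $q=p$ and $t=1$, so that $k'=p^{\min\{1,h\}}\ell'\le p(p-1)$. Two points in your write-up are more careful than the paper's. For the last claim you use $(k,p-1)=1$ to get $\ell'=1$, whereas the paper only cites $(k,p)=1$. You also explicitly exclude the field $\ff_{p^2}$, where the bound genuinely fails; for instance $k=p^2-1$ admits no $k'\le p(p-1)$.
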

	
	\begin{proof}
		Since $R$ has order $p^2$, the residue field $R/\frak m$ of $R$ is $\ff_p$. Thus, we have that $t=1$ in \eqref{eq: GRk=GRk'}. 
		If we write $k=p^h \ell$ with $(\ell,p)=1$, then we have that 
		$$ k'= p^{\min\{1,h\}} \ell'$$ 
		with $\ell'=(\ell,p-1) \le p-1$. 
		Hence, we get that $k'\le p(p-1)$. 
		
		For the remaining assertions, notice that if $(k,p)=1$ then $v_p(k)=0$ (where $v_p$ is the $p$-adic valuation of $k$) and hence $\ell'=1$, and so the result follows from Proposition \ref{prop: reduction GRk GRk'}.
	\end{proof}
	
	We now show how to use the reduction of the proposition.
	\begin{exam}
		Take $p=3$ and let $R_9$ be a finite commutative local ring of order $9$ (i.e., $\Z_9$ or $\Z_3[x]/(x^2)$). 
		In this case $t=1$ in \eqref{eq: GRk=GRk'}.
		If $k=3^h \ell$ with $(\ell,3)=1$, then $\ell'=(\ell,3-1) \in \{1,2\}$.
		So, $k'=3^{\min\{1,h\}} \ell'$ 
		can only take the values 
		$$ k'=1,2,3,6;$$ 
		namely $k'=1,2,$ if $h=0$ and $k'=3,6,$ if $h\ge 1$.
		
		Now, since $\ell$ is coprime with $3$, we have that $\ell = 3r+j$ with $j=1,2$ and $r\ge 0$. Thus, we get 
		$$ G_{R_9}(3^h(3r+j)) = \begin{cases}
			G_{R_9}(1) \qquad \quad \text{if $h=0$, $3r+j$ odd}, \\[1mm]
			G_{R_9}(2) \qquad \quad \text{if $h=0$, $3r+j$ even}, \\[1mm]
			G_{R_9}(3) \qquad \quad \text{if $h\ge 1$, $3r+j$ odd}, \\[1mm]
			G_{R_9}(6) \qquad \quad \text{if $h\ge 1$, $3r+j$ even}, \\
		\end{cases} $$
		for any $r,h \in \N_0$.

		Moreover, these are known graphs. Indeed, any $G_{R_9}(k)$ is one of the graphs $K_{3 \times 3}$, $\vec{K}_{3 \times 3}$, $C_{9}$ or $\vec{C}_9$ (see the figures below).
		In fact, take $R_9=\Z_9$ (the graphs $G_{R_9}(k)$ for $R_9=\Z_9$ and $R_9=\Z_3[x]/(x^2)$ are isomorphic, but it is easier to work with $\Z_9$), hence $\Z_9^* = \{1,2,4,5,7,8\}$ and thus 
		$$ U_{\Z_9,1} = \{1,2,4,5,7,8\}, \quad U_{\Z_9,2} = \{1,4,7\}, \quad 
		U_{\Z_9,3} = \{1,8\}, \quad \text{and} \quad U_{\Z_9,1} = \{1\}.$$ 
		This implies that the graphs $G_{\Z_9}(k)$ are $\frac{6}k$-regular with $k=1,2,3,6$. 
		One can show (or see Example \ref{exam: mulipatito}) 
		that 
		$$ G_{R_9}(1) = K_{3 \times 3} = K_{3,3,3} $$ 
		is the complete tripartite graph with $3$ independent sets of size $3$.
		In a similar way, it can be seen that $G_{R_9}(2) = \vec{K}_{3 \times 3}$ is a directed version of $K_{3 \times 3}$. In fact, $\vec{K}_{3 \times 3}$ is obtained from $K_{3 \times 3}$ by orienting all its edges in the same direction.
		Also, since $U_{{R_9},6}=\{1\}$ and $U_{{R_9},3}=\{1,-1\}$, it is clear that $G_{R_9}(3)=C_{9}$ is the $9$-cycle graph and $G_{R_9}(6)=\vec{C}_9$ is the directed $9$-cycle graph. 
		
		\begin{figure}[H]
			\begin{minipage}{0.45\textwidth}
				\centering
				
				\begin{tikzpicture}[scale=0.75, auto, 
					thick, main node/.style={fill=black, circle, inner sep=2pt}, 
					label distance=1mm] 
					
					\foreach \i/\name/\pos in {
						0/$0$/right, 
						1/$3$/above right, 
						2/$6$/above, 
						3/$1$/above, 
						4/$4$/above left, 
						5/$7$/left, 
						6/$2$/below left, 
						7/$5$/below , 
						8/$8$/below right} 
					{
						\node[main node] (n\i) at ({360/9 * \i}:3cm) [label=\pos:\name] {}; 
					}
					
					
					
					\path[thick, draw=black] (n0) edge (n3); 
					\path[thick, draw=black] (n0) edge (n4); 
					\path[thick, draw=black] (n0) edge (n5); 
					\path[thick, draw=black] (n0) edge (n6);
					\path[thick, draw=black] (n0) edge (n7);
					\path[thick, draw=black] (n0) edge (n8);
					
					\path[thick, draw=black] (n1) edge (n3); 
					\path[thick, draw=black] (n1) edge (n4); 
					\path[thick, draw=black] (n1) edge (n5); 
					\path[thick, draw=black] (n1) edge (n6);
					\path[thick, draw=black] (n1) edge (n7);
					\path[thick, draw=black] (n1) edge (n8);
					
					\path[thick, draw=black] (n2) edge (n3); 
					\path[thick, draw=black] (n2) edge (n4); 
					\path[thick, draw=black] (n2) edge (n5); 
					\path[thick, draw=black] (n2) edge (n6);
					\path[thick, draw=black] (n2) edge (n7);
					\path[thick, draw=black] (n2) edge (n8);
					
					\path[thick, draw=black] (n3) edge (n0); 
					\path[thick, draw=black] (n3) edge (n1); 
					\path[thick, draw=black] (n3) edge (n2); 
					\path[thick, draw=black] (n3) edge (n6); 
					\path[thick, draw=black] (n3) edge (n7);
					\path[thick, draw=black] (n3) edge (n8);

					\path[thick, draw=black] (n4) edge (n0); 
					\path[thick, draw=black] (n4) edge (n1); 
					\path[thick, draw=black] (n4) edge (n2); 
					\path[thick, draw=black] (n4) edge (n6); 
					\path[thick, draw=black] (n4) edge (n7);
					\path[thick, draw=black] (n4) edge (n8);
					
					\path[thick, draw=black] (n5) edge (n0); 
					\path[thick, draw=black] (n5) edge (n1); 
					\path[thick, draw=black] (n5) edge (n2); 
					\path[thick, draw=black] (n5) edge (n6); 
					\path[thick, draw=black] (n5) edge (n7);
					\path[thick, draw=black] (n5) edge (n8);
					
					\path[thick, draw=black] (n6) edge (n0); 
					\path[thick, draw=black] (n6) edge (n1); 
					\path[thick, draw=black] (n6) edge (n2); 
					\path[thick, draw=black] (n6) edge (n3); 
					\path[thick, draw=black] (n6) edge (n4);
					\path[thick, draw=black] (n6) edge (n5);
					
					\path[thick, draw=black] (n7) edge (n0); 
					\path[thick, draw=black] (n7) edge (n1); 
					\path[thick, draw=black] (n7) edge (n2); 
					\path[thick, draw=black] (n7) edge (n3); 
					\path[thick, draw=black] (n7) edge (n4);
					\path[thick, draw=black] (n7) edge (n5);
					
					\path[thick, draw=black] (n8) edge (n0); 
					\path[thick, draw=black] (n8) edge (n1); 
					\path[thick, draw=black] (n8) edge (n2); 
					\path[thick, draw=black] (n8) edge (n3); 
					\path[thick, draw=black] (n8) edge (n4);
					\path[thick, draw=black] (n8) edge (n5);
					
				\end{tikzpicture}
			\end{minipage}
			\hspace{.5cm} 
			\begin{minipage}{0.4\textwidth}
				\centering
				
				\begin{tikzpicture}[scale=0.75, auto, 
					thick, main node/.style={fill=black, circle, inner sep=1pt}, 
					label distance=1mm] 
					
					\foreach \i/\name/\pos in {
						0/$0$/right, 
						1/$3$/above right, 
						2/$6$/above, 
						3/$1$/above, 
						4/$4$/above left, 
						5/$7$/left, 
						6/$2$/below left, 
						7/$5$/below , 
						8/$8$/below right} 
					{
						\node[main node] (n\i) at ({360/9 * \i}:3cm) [label=\pos:\name] {}; 
					}
					
					
					
					\path[thick, draw=black, -{Stealth}] (n0) edge (n3); 
					\path[thick, draw=black, -{Stealth}] (n0) edge (n4); 
					\path[thick, draw=black, -{Stealth}] (n0) edge (n5);

					\path[thick, draw=black, -{Stealth}] (n1) edge (n3); 
					\path[thick, draw=black, -{Stealth}] (n1) edge (n4); 
					\path[thick, draw=black, -{Stealth}] (n1) edge (n5); 
					
					\path[thick, draw=black, -{Stealth}] (n2) edge (n3); 
					\path[thick, draw=black, -{Stealth}] (n2) edge (n4); 
					\path[thick, draw=black, -{Stealth}] (n2) edge (n5); 
					
					\path[thick, draw=black, -{Stealth}] (n3) edge (n6); 
					\path[thick, draw=black, -{Stealth}] (n3) edge (n7);
					\path[thick, draw=black, -{Stealth}] (n3) edge (n8);

					\path[thick, draw=black, -{Stealth}] (n4) edge (n6); 
					\path[thick, draw=black, -{Stealth}] (n4) edge (n7);
					\path[thick, draw=black, -{Stealth}] (n4) edge (n8);
					
					\path[thick, draw=black, -{Stealth}] (n5) edge (n6); 
					\path[thick, draw=black, -{Stealth}] (n5) edge (n7);
					\path[thick, draw=black, -{Stealth}] (n5) edge (n8);
					
					\path[thick, draw=black, -{Stealth}] (n6) edge (n0); 
					\path[thick, draw=black, -{Stealth}] (n6) edge (n1); 
					\path[thick, draw=black, -{Stealth}] (n6) edge (n2); 
					
					\path[thick, draw=black, -{Stealth}] (n7) edge (n0); 
					\path[thick, draw=black, -{Stealth}] (n7) edge (n1); 
					\path[thick, draw=black, -{Stealth}] (n7) edge (n2); 
					
					\path[thick, draw=black, -{Stealth}] (n8) edge (n0); 
					\path[thick, draw=black, -{Stealth}] (n8) edge (n1); 
					\path[thick, draw=black, -{Stealth}] (n8) edge (n2); 
				\end{tikzpicture}
			\end{minipage}
			\caption{The graphs $G_{R_9}(1) \simeq K_{3,3,3}$ and $G_{R_9}(2) \simeq \vec{K}_{3,3,3}$.}
		\end{figure}
		
		\begin{figure}[H]
			\begin{minipage}{0.45\textwidth}
				\centering
				
				\begin{tikzpicture}[scale=0.75, auto, 
					thick, main node/.style={fill=black, circle, inner sep=2pt}, 
					label distance=1mm] 
					
					\foreach \i/\name/\pos in {
						0/$0$/right, 
						1/$1$/above right, 
						2/$2$/above, 
						3/$3$/above, 
						4/$4$/above left, 
						5/$5$/left, 
						6/$6$/below left, 
						7/$7$/below , 
						8/$8$/below right} 
					{
						\node[main node] (n\i) at ({360/9 * \i}:3cm) [label=\pos:\name] {}; 
					}
					

					\path[thick, draw=black] (n0) edge (n1); 
					\path[thick, draw=black] (n0) edge (n8); 
					
					\path[thick, draw=black] (n1) edge (n2); 
					\path[thick, draw=black] (n1) edge (n0); 
					
					\path[thick, draw=black] (n2) edge (n3); 
					\path[thick, draw=black] (n2) edge (n1); 
					
					\path[thick, draw=black] (n3) edge (n4); 
					\path[thick, draw=black] (n3) edge (n2); 
					
					\path[thick, draw=black] (n4) edge (n5); 
					\path[thick, draw=black] (n4) edge (n3); 
					
					\path[thick, draw=black] (n5) edge (n6); 
					\path[thick, draw=black] (n5) edge (n4); 
					
					\path[thick, draw=black] (n6) edge (n7); 
					\path[thick, draw=black] (n6) edge (n5); 
					
					\path[thick, draw=black] (n7) edge (n8); 
					\path[thick, draw=black] (n7) edge (n6); 
					
					\path[thick, draw=black] (n8) edge (n0); 
					\path[thick, draw=black] (n8) edge (n7); 
					
				\end{tikzpicture}
			\end{minipage}
			\hspace{.5cm} 
			\begin{minipage}{0.45\textwidth}
				\centering
				
				\begin{tikzpicture}[scale=0.75, auto, 
					thick, main node/.style={fill=black, circle, inner sep=2pt}, 
					label distance=1mm] 
					
					\foreach \i/\name/\pos in {
						0/$0$/right, 
						1/$1$/above right, 
						2/$2$/above, 
						3/$3$/above, 
						4/$4$/above left, 
						5/$5$/left, 
						6/$6$/below left, 
						7/$7$/below , 
						8/$8$/below right} 
					{
						\node[main node] (n\i) at ({360/9 * \i}:3cm) [label=\pos:\name] {}; 
					}
					
					
					\path[thick, draw=black, -{Stealth}] (n0) edge (n1); 
					
					\path[thick, draw=black, -{Stealth}] (n1) edge (n2); 
					
					\path[thick, draw=black, -{Stealth}] (n2) edge (n3); 
					
					\path[thick, draw=black, -{Stealth}] (n3) edge (n4); 
					
					\path[thick, draw=black, -{Stealth}] (n4) edge (n5); 
					
					\path[thick, draw=black, -{Stealth}] (n5) edge (n6); 
					
					\path[thick, draw=black, -{Stealth}] (n6) edge (n7); 
					
					\path[thick, draw=black, -{Stealth}] (n7) edge (n8); 
					
					\path[thick, draw=black, -{Stealth}] (n8) edge (n0); 
					
				\end{tikzpicture}
			\end{minipage}
			\caption{The graphs $G_{R_9}(3) \simeq C_9$ and $G_{R_9}(6) \simeq \vec{C}_9$.}
		\end{figure}
		The independent sets for $G_{\Z_9}(1) \simeq K_{3,3,3}$ and $G_{\Z_9}(2) \simeq \vec{K}_{3,3,3}$ are $V_1=\{0,3,6\}$, $V_2=\{1,4,7\}= V_1+1$ and $V_3=\{2,5,8\}=V_1+2$.
		\hfill $\diamond$
	\end{exam}

	To finish the section, we note that Proposition \ref{prop: reduction GRk GRk'} does not hold in general for an infinite local ring, as the next example shows.
	
	\begin{exam}
		Consider the ring of formal series with coefficients in $\ff_3$, that is 
		$$ R = \ff_{3}[\![x]\!] = \big\{ \sum\nolimits_{i=0}^{\infty} a_i x^i : a_i \in\ff_3 \big\} .$$ 
		Then, $R$ is an infinite local ring with maximal ideal $\frak m = \{ \sum_{i=1}^{\infty}a_i x^i : a_i \in\ff_3\}$ and residue field $R/\frak{m} = \ff_3$,  and hence $t=1$ in \eqref{eq: GRk=GRk'}.
		
		Take any even number $k>2$ which is not divisible by 3 (for instance $k=2^r$). Hence $h=0$ in \eqref{eq: GRk=GRk'} and we have 
		$ k'= 3^{\min\{1,0\}} (k,3-1)= (k,2)=2$.
		However, 
		$$U_{R,2} \ne U_{R,k}$$ 
		since there is no polynomial of degree $2$ in the set $U_{R,k}$, but the polynomial $(x+1)^{2}\in U_{R,2}$. 
		Thus, we have that $G_R(2) \ne G_R(k)$, although $k'=2$.
		\hfill $\diamond$
	\end{exam}

	\section{Directedness of $G_R(k)$, $R$ local} \label{sec: directedness}
	In this section, we show that for 
	$$ (R,\frak{m}) \text{\textsl{ a commutative local ring of odd characteristic}}, $$ 
	the graph $G_R(k)$ is (un)directed if and only if $G_{R/\frak{m}}(k)$ is (un)directed.
	That is, to study the directedness of the graphs $G_R$ one can change the local ring by the residue field, which is smaller. This is another kind of reduction that is useful.

	In what follows we will need the \textit{$2$-adic valuation} of an integer $k$, denoted by $v_2(k)$, which is the maximum non-negative integer $t$ such that $2^{t}\mid k$ and $2^{t+1}\nmid k$.  That is, 
	$$v_2(2^t m) = t \qquad (\text{$m$ odd}). $$
	
	We now give necessary and sufficient conditions for $-1$ to be a $k$-th power of $R$ in terms of 2-adic valuations of $k$ and $q-1$.
	
	\begin{lem} \label{lem: condition for -1 in URk, R local}
		If $(R, \frak{m})$ is a commutative local ring of odd characteristic with $R / \frak{m} \simeq \ff_{q}$, then we have: 
		
		\begin{enumerate}[$(a)$]
			\item $-1\in U_{R,k}$ if and only if $-1\in U_{R/\frak{m}, k}$. \smallskip
			
			\item $-1\in U_{R,k}$ if and only if $v_{2}(k)< v_{2}(q-1)$. \smallskip
			
			\item If $-1 \not \in U_{R,k}$, then $k \in 2\Z$  and $U_{R,k}\cap (-U_{R,k})=\varnothing$.
		\end{enumerate}
	\end{lem}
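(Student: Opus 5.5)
The plan is to work through the splitting isomorphism $\Psi\colon R^*\to (R/\frak m)^*\times(1+\frak m)$ constructed in the proof of Proposition \ref{prop: reduction GRk GRk'}. Under it, $-1$ corresponds to $([-1],1)$, since the section satisfies $s([-1])=-1$. Because $R$ is finite local of odd characteristic, $|R|$ is a power of an odd prime $p$, so $|1+\frak m|=|\frak m|=p^t$ is odd. Also $\Psi$ turns $k$-th powers componentwise into $k$-th powers, so $\Psi(U_{R,k})=U_{\ff_q,k}\times (1+\frak m)^k$.

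For (a), suppose first $-1\in U_{R,k}$, say $-1=x^k$. Reducing mod $\frak m$ gives $[-1]=[x]^k$, so $-1\in U_{R/\frak m,k}$. Conversely, suppose $[-1]=a^k$ with $a\in\ff_q^*$. Then $\Psi^{-1}(a,1)=s(a)$ satisfies $\Psi(s(a)^k)=(a^k,1)=([-1],1)=\Psi(-1)$, hence $-1=s(a)^k\in U_{R,k}$. The only thing to check is that the second coordinate of $-1$ is $1$, and this comes directly from the choice $s([-1])=-1$ (so $s([-1])^{-1}(-1)=1$).

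For (b), by (a) it suffices to work in the cyclic group $\ff_q^*$ of even order $q-1$. There $-1$ is the unique element of order $2$, and the set of $k$-th powers is the subgroup of index $d=(k,q-1)$, i.e.\ of order $(q-1)/d$. So $-1$ is a $k$-th power iff $2\mid (q-1)/d$, i.e.\ iff $v_2(d)<v_2(q-1)$. Since $v_2(d)=\min\{v_2(k),v_2(q-1)\}$, this holds iff $v_2(k)<v_2(q-1)$.

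For (c), if $-1\notin U_{R,k}$ then by (b) $v_2(k)\ge v_2(q-1)\ge1$, so $k$ is even. Now suppose $x^k=-y^k$ with $x,y\in R^*$. Then $-1=(xy^{-1})^k\in U_{R,k}$, a contradiction, so $U_{R,k}\cap(-U_{R,k})=\varnothing$.

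The main subtlety is only in the converse direction of (a), namely lifting a $k$-th root from the residue field. The section $s$ with $s([-1])=-1$ handles this cleanly, because it means no root needs to be extracted inside $1+\frak m$. Even without that choice, one could still take roots there: $(1+\frak m)$ has odd order, so a $k$-th root exists when $k$ is odd, while for even $k$ one would need the $2$-part argument, which the section choice avoids.
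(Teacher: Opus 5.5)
Your proof is correct and follows essentially the paper's argument: (a) uses the splitting isomorphism $\Psi$ with $s([-1])=-1$, so that $s(a)^k=-1$ whenever $a^k=[-1]$; (b) uses the cyclic group $\ff_q^*$ and $(k,q-1)$; (c) uses the same $(xy^{-1})^k$ contradiction. The only cosmetic differences are that you get the forward direction of (a) by reducing mod $\frak{m}$, and you get evenness of $k$ from (b) rather than from $(-1)^k=-1$ for odd $k$.

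Drop your closing aside, though, because it is inaccurate. Since $\Psi$ is a homomorphism, $s$ is multiplicative on units, so $s([-1])^2=1$ forces $s([-1])=-1$ and there is no other choice. Moreover, in the $p$-group $1+\frak{m}$ the obstruction to extracting $k$-th roots is $p\mid k$, not the parity of $k$.
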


	\begin{proof}
		($a$) 
		Recall that we have the isomorphism $\Psi: R^* \rightarrow (R/\frak{m})^* \times (1+\frak{m})$ defined by $\Psi(u)=([u],(s([u]))^{-1}u)$, where $s: R/\frak{m} \rightarrow R$ is a section of the quotient map $\pi: R \rightarrow R/\frak{m} $ such that $s([-1])=-1$ and $s([1])=1$ (see \eqref{eq: Psi isomorfismo} in Proposition~\ref{prop: reduction GRk GRk'}). 
		
		Notice that $\Psi(-1)=(-[1],1)$ and, if $\Psi(a)=(b,c)$ then $\Psi(a^{k})=(b^{k},c^{k})=(b,c)^{k}$, which implies that 
		$$
		-1\in U_{R,k} \quad \Rightarrow \quad -1\in U_{R/\frak{m}, k}.
		$$ 
		
		On the other hand, suppose that there exists $\alpha \in (R/\frak{m})^*$ such that $\alpha^{k}=[-1]$. 
		Now, let $u=s(\alpha)$, since $s$ is a section we have that $s(\alpha \cdot \beta )= s(\alpha) s(\beta)$ which implies that  
		$$	
		u^{k}=s(\alpha^k) = s([-1]) = -1 \qquad \text{and} \qquad 
		(s(\alpha)^{-1})^{k}=s(\alpha^{k})^{-1}=(-1)^{-1}=-1. 
		$$
		Thus, 
		$$
		(\alpha,s(\alpha)^{-1}u)^{k}=(\alpha^{k}, (s(\alpha)^{-1})^{k} u^{k})=([-1],1).
		$$
		Hence, $([-1],1)$ is a $k$-th power in $(R/\frak{m})^* \times (1+\frak{m})$ and so $-1 \in U_{R,k}$ and therefore 
		\begin{equation} \label{eq: -1's}
			-1\in U_{R,k} \quad \Leftrightarrow \quad -1\in U_{R/\frak{m}, k}.
		\end{equation}
		
		\noindent ($b$) 
		Recall that in general, for an odd $q$ prime power we have that $U_{\mathbb{F}_{q},t}=U_{\mathbb{F}_{q},t'}$ where $t'=(t,q-1)$. 
		Moreover, $-1\in U_{\mathbb{F}_{q},t'}$ if and only if $t'\mid \frac{q-1}{2}$.  
		Hence, these two statements imply that $-1 \in U_{(R/\frak{m}), k}$ if and only if $v_{2}(k)\le v_{2}(\frac{q-1}{2})$. 
		Since 
		$$ v_{2}(\tfrac{q-1}{2}) = v_{2}(q-1)-1.$$
		The above statement is equivalent to 
		$-1 \in U_{(R/\frak{m}), k}$ if and only if $v_{2}(k)< v_{2}(q-1)$. Therefore, equation \eqref{eq: -1's} implies that
		$-1\in U_{R,k}$ if and only if $v_{2}(k)<v_{2}(q-1)$, as asserted.
		
		\smallskip 
		
		\noindent ($c$) 
		Assume that $-1 \not \in U_{R,k}$. Thus, $k$ must be even since $-1=(-1)^{k}\in U_{R,k}$ if $k$ is odd. 
		Now, suppose that $U_{R,k}\cap (-U_{R,k}) \ne \varnothing$. That is, there exists $r\in R^*$ such that
		$$ x^{k} = r = -y^{k} $$
		for some $x,y \in R^*$. 
		This implies that $-1=(xy^{-1})^{k} \in U_{R,k}$, which is a clear contradiction, and so $U_{R,k} \cap (-U_{R,k}) = \varnothing$, as we wanted to show.
	\end{proof}
	
	As a direct consequence of the previous result, we get that the UCGs over $R$ and its residue field $R/\frak m$ have the same directedness behavior. 
	
	\begin{prop}
		Let $(R, \frak{m})$ be a commutative local ring of odd characteristic with $R / \frak{m} \simeq \ff_{q}$. 
		Then, $G_{R}(k)$ is (un)directed if and only if $G_{R/\frak{m}}(k)$ is (un)directed. 
	\end{prop}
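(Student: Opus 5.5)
The plan is to reduce both directedness statements to the membership of $-1$ in the connection set, and then apply Lemma \ref{lem: condition for -1 in URk, R local}$(a)$. Recall from Section \ref{subsec: preliminares} that a Cayley graph $X(G,S)$ is undirected if and only if $S=-S$. So the first step is to show that, for any finite commutative ring $A$ (in particular for $A=R$ and for $A=R/\frak m\simeq\ff_q$), the set $U_{A,k}$ is symmetric if and only if $-1\in U_{A,k}$.

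\emph{Step 1: symmetry is equivalent to $-1\in U_{A,k}$.} The set $U_{A,k}$ is a subgroup of $A^*$, being the image of the homomorphism $x\mapsto x^k$ of the abelian group $A^*$.
\begin{itemize}
\item If $-1=y^k\in U_{A,k}$, then for every $x^k\in U_{A,k}$ we have $-x^k=(yx)^k\in U_{A,k}$. Hence $-U_{A,k}\subseteq U_{A,k}$, and equality follows since both sets have the same size.
\item Conversely, if $U_{A,k}=-U_{A,k}$, then $1=1^k\in U_{A,k}$ forces $-1\in U_{A,k}$.
\end{itemize}
This is exactly the remark already made in the paper after \eqref{uGRk}, so it can essentially be cited.

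\emph{Step 2: chain of equivalences.} Combining Step 1 (applied once to $R$ and once to $R/\frak m$) with Lemma \ref{lem: condition for -1 in URk, R local}$(a)$ gives
$$G_R(k)\ \text{undirected} \iff -1\in U_{R,k}\iff -1\in U_{R/\frak m,k}\iff G_{R/\frak m}(k)\ \text{undirected}.$$
Taking negations yields the directed case. As an optional addition, Lemma \ref{lem: condition for -1 in URk, R local}$(b)$ gives the explicit criterion: undirectedness holds exactly when $v_2(k)<v_2(q-1)$.

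\emph{Main obstacle.} The only point requiring care is a convention. One must confirm that ``directed'' here means ``not undirected'', i.e.\ $S\neq -S$, rather than the stronger condition that no arc has a reverse. If the stronger notion were intended, I would invoke Lemma \ref{lem: condition for -1 in URk, R local}$(c)$: when $-1\notin U_{R,k}$ we have $U_{R,k}\cap(-U_{R,k})=\varnothing$, so no arc has its reverse, and the same holds for the residue field. Under either convention, the two graphs then have identical directedness behavior.
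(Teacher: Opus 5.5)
Your proposal is correct and follows the paper's proof, which is the one-line observation that undirectedness of $G_R(k)$ (resp.\ $G_{R/\frak m}(k)$) amounts to $-1$ lying in the connection set, combined with Lemma~\ref{lem: condition for -1 in URk, R local}$(a)$. One small adjustment: this section does not assume $R$ finite, so in Step~1 replace the cardinality argument with the observation that $-U_{A,k}\subseteq U_{A,k}$ implies $U_{A,k}\subseteq -U_{A,k}$ upon negating both sides, which works for any commutative ring.
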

	
	\begin{proof}
		It is straightforward from $(a)$ of Lemma \ref{lem: condition for -1 in URk, R local}.
	\end{proof}
	
	We now illustrate the proposition with a general example.
	
	\begin{exam}
		Consider the finite local ring $R= \Z_{p^t}$ with $p$ an odd prime and $t\in \N$. Then, $R/\frak{m}=\ff_p$. 
		In this way, $G_{\Z_{p^t}}(k)$ is (un)directed if and only if $G_{\ff_p}(k)$ is (un)directed. 
		If $k\mid p-1$, then $G_{\ff_p}(k) = \Gamma(k,p)$; and it is known that this graph is undirected if and only if $k\mid \frac{p-1}{2}$. 
		In particular, we have that 
		$$ G_{\Z_{p^t}}(\tfrac{p-1}2) \text{ is undirected} \qquad \text{and} \qquad G_{\Z_{p^t}}(p-1) \text{ is directed},$$ 
		since we have 
		$G_{\ff_p}(\frac{p-1}2) = \G(\frac{p-1}2,p) = C_p$ and $G_{\ff_p}(p-1) = \G(p-1,p) = \vec{C}_p$, by the proposition. 
		\hfill $\diamond$
	\end{exam}

	However, the previous proposition is not true in even characteristic. 
	
	\begin{exam} \label{exam: not true -1 GRk}
		Consider the local ring $R=\mathbb{Z}_{2^t}$. Then, its units are $R^*=\{1,3,\ldots,2^t-1\}$ and its residue field is $\ff_2$. 
		In this case $-1\in U_{\ff_2,2}$ trivially since $-1=1$ in $\ff_2$, but 
		$$ -1 = 2^t-1 \not \in U_{R,2}. $$ 
		Indeed, the unique unit square of $R$ is $1$. In this way, we have that $G_{\Z_{2^t}}(2)$ is directed, but $G_{\ff_2}(2)$ is undirected.
		\hfill $\diamond$
	\end{exam}

	\section{The graphs $\mathcal{G}_R$ as graphs $G_R$, in odd characteristic} \label{sec: rel entre GRk y mathcalGRk}
	Our next goal is to express the graph $\mathcal{G}_R(k)$ for a given $k$ in terms of a graph $G_R(l)$ for certain well-determined $l$, thus reducing the study of the symmetrized graphs $\mathcal{G}_{R}$ to the graphs $G_R$.
	We will be able to do this in odd characteristic (or 2, but not even). We do not require here that $R$ be finite.

	\subsection{2-adic reductions of $k$ for $G_R(k)$} \label{subsec:4.1}
	In Section \ref{sec: reduction}, we studied a reduction for the integer $k$ in $G_R(k)$. Here we will give another reduction for $k$, obtained by reducing by powers of~$2$, that will be useful for the next subsection.

	We begin with the following elemental result, which will be used in the proof of the next proposition. 
	It is probably well-known, but we include the proof for completeness.
	\begin{lem} \label{lem: U2}
		Let $(R, \frak{m})$ be a commutative local ring of odd characteristic and let $u\in R^*$. 
		Then, $u^{2}=1$ if and only if $u=\pm 1$.
	\end{lem}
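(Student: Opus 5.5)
The plan is to factor $u^2-1=(u-1)(u+1)$ and use the fact that, in a local ring, the sum of two non-units is a non-unit. The direction $u=\pm1 \Rightarrow u^2=1$ is trivial, so assume $u\in R^*$ with $u^2=1$. Then
$$ (u-1)(u+1)=0 .$$
The key observation is that $u-1$ and $u+1$ cannot both lie in $\frak m$. Indeed, if they did, their difference $(u+1)-(u-1)=2$ would lie in $\frak m$. But $R$ has odd characteristic, so $2$ is a unit. To justify this, note that if $\operatorname{char}R=n$ is odd then $(2,n)=1$, so there are integers $a,b$ with $2a+nb=1$; hence $2\cdot a=1$ in $R$. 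So $2\notin\frak m$, a contradiction.

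Consequently one of the factors, say $u-1$ or $u+1$, lies outside $\frak m$. Since $R$ is local, every element outside $\frak m$ is a unit, so that factor is invertible. Multiplying the equation $(u-1)(u+1)=0$ by its inverse forces the other factor to vanish, giving $u=1$ or $u=-1$.

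The only step needing any care is checking that $2\in R^*$. This is where the odd characteristic hypothesis is used, and it is exactly what fails in Example \ref{exam: not true -1 GRk}: in $\Z_{2^t}$ with $t\ge3$, the element $2^{t-1}+1$ is a square root of $1$ different from $\pm1$. If one wants to allow characteristic $0$ (the paper does not require finiteness here), the same argument works as long as $2$ is invertible, which is the intended reading of ``odd characteristic'' here. Otherwise, I would add the remark that the lemma uses only $2\in R^*$.
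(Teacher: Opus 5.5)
Your proof is correct and is essentially the paper's argument. The paper also factors $(u-1)(u+1)=0$ and argues that if both factors were nonzero they would be non-units, hence in $\frak m$, forcing $2=(u+1)-(u-1)\in\frak m$, which contradicts $2\in R^*$; you run the same reasoning in contrapositive form (one factor lies outside $\frak m$, so it is a unit and the other factor vanishes), and you additionally spell out why $2\in R^*$ and note that this is the only hypothesis actually used, both of which the paper leaves implicit.
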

	
	\begin{proof}
		Notice that since the characteristic of $R$ is odd, we have that $2$ is a unit of $R$.
		Assume that $u\in R^*$ such that $u^{2}=1$, then $(u-1)(u+1)=0$.
		Notice that if we assume that $u-1$ and $u+1$ are both non-zero, then $u-1,\, u+1 \in \frak{m}$, and hence $2=(u+1)-(u-1)\in \frak{m}$ since $\frak{m}$ is an ideal. 
		But this is absurd since $2 \in R^*$. 
		Hence, either $u-1=0$ or else $u+1=0$, which implies that $u=\pm 1$ as asserted.
	\end{proof}

	In the sequel, we will make repeatedly use the number 
	\begin{equation} \label{eq: N v2k-v2q}
		N = v_{2}(k)-v_{2}(q-1)+1, 
	\end{equation}
	where $k$ is a positive integer and $q$ a prime power.
	
	Consider the quadratic map in $R$,
	$$ \Phi: R \rightarrow R, \qquad \Phi(u)=u^{2}. $$ 
	Using the previous two lemmas we can study the properties of the connection sets $U_{R,k}$ and the relationship between  $U_{R,\frac{k}{2}}$ and $U_{R,k}$ and the map $\Phi$ restricted to these sets.

	\begin{prop} \label{prop: Urk/2 = 2URk}
		Let $(R, \frak{m})$ be a commutative local ring of odd characteristic with $R/\frak{m}\cong \ff_q$ and let $k \in 2\N$ such that $v_{2}(k)\ge v_{2}(q-1)$. 
		Then, the quadratic map $\Phi$ restricted to $U_{R, \frac k2}$,
		$$ \Phi_{R,\frac k2}: U_{R,\frac{k}{2}} \rightarrow U_{R,k}, \qquad u \mapsto u^2, $$ 
		is surjective and we have the following:
		\begin{enumerate}[$(a)$]
			\item If $v_{2}(k)>v_{2}(q-1)$, then $\Phi_{R,\frac k2}$ is a bijection and hence $U_{R,\frac k2} = U_{R,k}$. 
			Indeed, we have 
			\begin{equation} \label{eq: URk2N}
				U_{R,\frac{k}{2^j}}=U_{R,k} \qquad (0\le j \le N-1).
			\end{equation} 
			
			\item If $v_{2}(k)=v_{2}(q-1)$, then $\Phi_{R, \frac k2}$ is a $2$-to-$1$ map and we have that 
			\begin{equation}\label{eq: URk2 = URk -URk}
				U_{R,\frac{k}{2}}=U_{R,k}\cup (-U_{R,k}).
			\end{equation}
			
			\item Moreover, if $R$ is finite it holds 
			$$|U_{R,k}| =  \begin{cases}
				\tfrac 12 |U_{R,\frac{k}{2^{N}}}| & \qquad \text{if $v_{2}(k)>v_{2}(q-1)$},\\[1.75mm] 
				\frac 12 |U_{R,\frac{k}{2}}| 	  & \qquad \text{if $v_{2}(k)=v_{2}(q-1)$}. 
			\end{cases}$$ 
		\end{enumerate}
	\end{prop}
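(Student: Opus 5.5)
Surjectivity is immediate: any element of $U_{R,k}$ is $x^k=(x^{k/2})^2=\Phi(x^{k/2})$ with $x^{k/2}\in U_{R,\frac k2}$. So the plan is to compute the fibres of $\Phi_{R,\frac k2}$ and then translate this into the set identities and cardinalities.

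\textbf{Fibres via Lemma~\ref{lem: U2}.} If $u,w\in U_{R,\frac k2}$ satisfy $u^2=w^2$, then $(uw^{-1})^2=1$, so $uw^{-1}=\pm1$ by Lemma~\ref{lem: U2}, i.e.\ $w=\pm u$. Hence each fibre is $\{u\}$ or $\{u,-u\}$. Since $U_{R,\frac k2}$ is a subgroup of $R^*$ (the image of the homomorphism $x\mapsto x^{k/2}$), we have $-u\in U_{R,\frac k2}$ iff $-1\in U_{R,\frac k2}$. The map is therefore injective when $-1\notin U_{R,\frac k2}$ and exactly $2$-to-$1$ when $-1\in U_{R,\frac k2}$ (note $u\neq -u$ since $2\in R^*$). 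By Lemma~\ref{lem: condition for -1 in URk, R local}(b), $-1\in U_{R,\frac k2}$ iff $v_2(k/2)<v_2(q-1)$, i.e.\ iff $v_2(k)\le v_2(q-1)$. Under the standing hypothesis $v_2(k)\ge v_2(q-1)$, case (a) gives injectivity and case (b) gives the $2$-to-$1$ property.

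\textbf{Set identities.} In (a), $\Phi$ is a bijection $U_{R,\frac k2}\to U_{R,k}$, but this alone does not give equality of sets. For finite $R$ the sizes agree, and $U_{R,k}=\{x^k\}\subseteq U_{R,\frac k2}$ always holds, since $x^k=(x^2)^{k/2}$; so equality follows. In general I would argue directly: given $y=x^{k/2}$, one has $y^2\in U_{R,k}$, and I want $y$ itself to be a $k$-th power. Use the decomposition $R^*\simeq \ff_q^*\times(1+\frak m)$ given by $\Psi$. The group $1+\frak m$ has odd order in the finite case, so squaring there is bijective. On $\ff_q^*$ (cyclic of order $q-1$), the condition $v_2(k)>v_2(q-1)$ gives $(k,q-1)=(k/2,q-1)$, so the $k$-th and $(k/2)$-th powers coincide; combining the two components gives the claim.

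In the infinite case the $1+\frak m$ component is the delicate part. There I would instead use that $y^2=z^k$ implies $y=\pm z^{k/2}$ by Lemma~\ref{lem: U2}, and that $-1\in U_{R,\frac k2}$ fails, so I must rule out $y=-z^{k/2}$. This does not follow immediately, so I expect this to be the main obstacle, and I would restrict to the finite or $\Psi$-splitting setting used earlier in the paper. Iterating (a) yields \eqref{eq: URk2N}: for $0\le j\le N-2$ one has $v_2(k/2^j)>v_2(q-1)$, and each step preserves the set.

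For (b), $U_{R,\frac k2}$ is a group containing $-1$ and $U_{R,k}\subseteq U_{R,\frac k2}$, so $U_{R,k}\cup(-U_{R,k})\subseteq U_{R,\frac k2}$. Conversely, since $v_2(k/2)<v_2(q-1)$ and $v_2(k)=v_2(q-1)$, the quotient $U_{R,\frac k2}/U_{R,k}$ has order $2$ (by the $2$-to-$1$ count in the finite case, or the $\Psi$-decomposition in general), and $-1\notin U_{R,k}$ by Lemma~\ref{lem: condition for -1 in URk, R local}. Hence the two cosets are $U_{R,k}$ and $-U_{R,k}$.

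\textbf{Cardinalities (c).} These follow from counting fibres. In case (b), the $2$-to-$1$ map gives $|U_{R,k}|=\tfrac12|U_{R,\frac k2}|$. In case (a), $U_{R,k}=U_{R,k/2^{N-1}}$ and $v_2(k/2^{N-1})=v_2(q-1)$, so applying (b) to $k/2^{N-1}$ gives $|U_{R,k}|=\tfrac12|U_{R,k/2^N}|$.
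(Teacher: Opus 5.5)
For finite $R$ your argument is correct and is essentially the paper's. You compute the fibres of $\Phi_{R,\frac k2}$ from Lemma~\ref{lem: U2} together with the criterion for $-1\in U_{R,\frac k2}$. You get the equality in (a) from the bijection, the inclusion $U_{R,k}\subseteq U_{R,\frac k2}$ and counting. You get (c) by applying (b) to $k/2^{N-1}$. The one difference is in (b): you show that $U_{R,k}$ has index $2$ in $U_{R,\frac k2}$ and that $-1$ lies in the non-trivial coset. The paper instead chases fibres through $\Phi(U_{R,k})=U_{R,k}$, i.e.\ through (a) applied to $2k$. Your version is a bit more direct, and both depend on finiteness.

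That dependence on finiteness is the point to sharpen. The paper writes ``this clearly implies $U_{R,\frac k2}=U_{R,k}$'' and its section explicitly allows infinite $R$. In fact the set identities in (a) and (b) are false for infinite local rings. So your infinite case is not merely delicate but impossible, and your fallback ``or the $\Psi$-splitting setting'' does not rescue it.

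Take $R=\ff_3[x]_{(x)}$. It has characteristic $3$ and residue field $\ff_3$, and the constants give a multiplicative section, so $\Psi$ splits.
\begin{itemize}
\item For $k=4$, the map $\Phi\colon U_{R,2}\to U_{R,4}$ is a bijection, since $-1\notin U_{R,2}$. Yet $(1+x)^2\in U_{R,2}\smallsetminus U_{R,4}$. Indeed, if $(1+x)^2=z^4$, then Lemma~\ref{lem: U2} gives $z^2=\pm(1+x)$. This is impossible, because $\pm(1+x)$ has odd degree and so is not a square in $\ff_3(x)$.
\item For $k=2$, the unit $1+x$ lies in $U_{R,1}=R^*$ but not in $U_{R,2}\cup(-U_{R,2})$.
\end{itemize}
What your $\Psi$-argument really uses is that squaring is onto on $1+\frak m$. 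Finiteness supplies this, since $1+\frak m$ then has odd order, but it fails here. So state the hypothesis ``$R$ finite'' explicitly and delete the ``$\Psi$-splitting'' and ``in general'' alternatives. With that change, your proof is complete.
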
	
	
	\begin{proof}
		Fisrt, notice that if $x\in R^*$, then $x^{k}=(x^{\frac{k}{2}})^{2} \in \Phi(U_{R,\frac{k}{2}})$ and so the map $\Phi_{R, \frac k2}$ is surjective. 
		
		\smallskip 
		
		\noindent ($a$)	
		Assume that $v_{2}(k)> v_{2}(q-1)$. Then, $v_{2}(\frac{k}{2})\ge v_{2}(q-1)$ which implies that $|\ker \Phi|=1$. Thus, the map $\Phi$ is 1-1 and hence a bijection, as asserted. This clearly implies that 
		$$U_{R,\frac k2} = U_{R,k}.$$ 
		The identity $U_{R,\frac{k}{2^j}}=U_{R,k}$ for $j=1,\ldots, N-1$ follows from the previous identity by induction.
		
		\smallskip 
		
		\noindent ($b$) 	
		Now, assume that $v_{2}(k)=v_{2}(q-1)$. It is enough to show that $|\ker \Phi_{R, \frac k2}|=2$, since $\Phi$ is a group homomorphism.
		In this case $c\in \ker \Phi$ if and only if $c^{2}=1$ and, by Lemma \ref{lem: U2}, this in turn happens if and only if $c=\pm 1$. 
		Now, by Lemma \ref{lem: condition for -1 in URk, R local} we have that $\pm 1\in U_{R,\frac{k}{2}}$ which implies that $|\ker \Phi|=2$, as claimed.
		
		To see that equation \eqref{eq: URk2 = URk -URk} holds, it is enough that any element of $U_{R,\frac{k}{2}}$ belongs to $U_{R,k}$ or to $-U_{R,k}$, since $-1\in U_{R,\frac{k}{2}}$ and $U_{R,k}\subseteq U_{R,\frac{k}{2}}$. Notice that item ($a$) implies that $\Phi(U_{R,k})=U_{R,k}$, and since $-1\not \in U_{R,k}$, item ($c$) of Lemma \ref{lem: condition for -1 in URk, R local} implies that 
		$$ 
		U_{R,k} \cap (-U_{R,k})=\varnothing.
		$$	
		Moreover, since $(-1)^{2}=1$ we have that 
		$$ 
		\Phi(-U_{R,k})=\Phi(U_{R,k})=U_{R,k}=\Phi(U_{R,\frac{k}{2}}).
		$$
		Now, if $a\in U_{R,\frac{k}{2}}$ with $a\not \in U_{R,k}$, then $\Phi(a)\in U_{R,k}$ and the above equalities imply that there exist $b\in -U_{R,k}$ and $c\in U_{R,k}$ such that $\Phi(c)=\Phi(a)=\Phi(b)$.
		Since the map is $2$-to-$1$ when we restrict to $U_{R,\frac{k}{2}}$ and $a\not \in U_{R,k}$ then $a=b\in -U_{R,k}$. 
		Therefore, we obtain that 
		$$
		U_{R,\frac{k}{2}}=U_{R,k}\cup (-U_{R,k}),
		$$
		as desired.
		
		\smallskip 
		
		\noindent ($c$)
		If $R$ is finite, we clearly have $|U_{R,\frac{k}{2}}|=2 |U_{R,k}|$ in the case when $v_{2}(k)=v_{2}(q-1)$. 
		The remaining equality $|U_{R,\frac{k}{2^{N}}}|=2 |U_{R,k}|$ in the case $v_{2}(k)>v_{2}(q-1)$ follows from ($b$).
	\end{proof}
	
	As a consequence, under certain mild arithmetic conditions, we can divide $k$ by powers of 2, namely up to $2^{N-1}$.
	More precisely, we have the following.
	
	\begin{coro}
		Let $(R, \frak{m})$ be a commutative local ring of odd characteristic with residue field $R/\frak{m}\cong \ff_q$ and let $k \in 2\N$ such that $v_{2}(k)\ge v_{2}(q-1)$. 
		Then, we have that 
		\begin{equation} \label{eq: chain GRk = GRk2N-1}
			G_{R}(k) = G_{R}(\tfrac{k}{2}) = G_{R}(\tfrac{k}{2^2}) = \cdots = G_{R}(\tfrac{k}{2^{N-1}}),
		\end{equation}	 
		where $N$ is as in \eqref{eq: N v2k-v2q}.
	\end{coro}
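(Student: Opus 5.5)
This corollary follows directly from Proposition \ref{prop: Urk/2 = 2URk}$(a)$. Two Cayley graphs on the same group $R$ coincide exactly when their connection sets coincide, so it suffices to show $U_{R,\frac{k}{2^j}} = U_{R,k}$ for every $0\le j\le N-1$. That is precisely the identity \eqref{eq: URk2N}.

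I would split into cases on $N$. If $v_2(k)=v_2(q-1)$, then $N=1$ and the chain in \eqref{eq: chain GRk = GRk2N-1} reduces to the single term $G_R(k)$, so there is nothing to prove. Note that Proposition \ref{prop: Urk/2 = 2URk}$(b)$ shows the chain cannot be extended to $j=1$ here, since $U_{R,\frac k2}=U_{R,k}\cup(-U_{R,k})$ strictly contains $U_{R,k}$.

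If $v_2(k)>v_2(q-1)$, then $N\ge 2$. I would argue by induction on $j$, applying part $(a)$ of the proposition to $k_j=k/2^j$. Each $k_j$ is a positive even integer with $v_2(k_j)=v_2(k)-j$. For $j\le N-2$ we have $v_2(k_j)\ge v_2(q-1)+1 > v_2(q-1)$, so part $(a)$ gives $U_{R,k_j/2}=U_{R,k_j}$, that is, $U_{R,k_{j+1}}=U_{R,k_j}$. Chaining these equalities for $j=0,\dots,N-2$ gives $U_{R,k/2^{j}}=U_{R,k}$ for all $0\le j\le N-1$, and the graph equalities follow.

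The only point needing care is the index bookkeeping: the hypothesis of part $(a)$ must hold at each step, and the induction stops exactly at $N-1$. Also, $k/2^{N-1}$ is still an integer because $N-1\le v_2(k)$, which holds since $v_2(q-1)\ge 1$ for odd $q$.
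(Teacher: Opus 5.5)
Your proposal is correct and follows the paper's argument: the paper just says the chain follows directly from Proposition~\ref{prop: Urk/2 = 2URk}$(a)$, and your induction makes explicit the check that $(a)$ applies to each $k/2^j$ with $j\le N-2$, since then $v_2(k/2^j)\ge v_2(q-1)+1$. One caveat, which comes from the paper and not from you: in $(a)$, the step from ``$\Phi_{R,\frac k2}$ is a bijection'' to $U_{R,\frac k2}=U_{R,k}$ uses that $R$ is finite, even though the paper says finiteness is not required in that section (for the infinite local ring $R=\ff_3[x]_{(x)}$ and $k=4$, so $N=2$, we have $(1+x)^2\in U_{R,2}\setminus U_{R,4}$ because $\pm(1+x)$ is not a square in $\ff_3(x)$), so both your argument and the paper's prove the chain only for finite $R$.
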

	
	\begin{proof}
		It follows directly from item ($a$) of Proposition \ref{prop: Urk/2 = 2URk}.
	\end{proof}
	
	We now compare the two reductions at hand for the power $k$ in $G_R(k)$.
	\begin{rem}
		In Section \ref{sec: reduction}, we showed a reduction for $k$ in $G_R(k)$ for $R$ a finite local ring. In this section, we present another reduction of $k$ when $R$ is a local ring of odd characteristic. Therefore, if $R$ is a local finite ring of odd characteristic we have both reductions at disposal. In this case, by \eqref{eq: GRk=GRk'} and \eqref{eq: chain GRk = GRk2N-1} we have 
		$$ G_R(k) = G_R(k') = G_R(\tfrac{k}{2^{N-1}} ). $$
		In general, the first reduction is better, since $k' \mid \frac{k}{2^{N-1}}$. 
		Indeed, it is clear that 		 		
		$$ v_{p}(k') \le v_{p}(\tfrac{k}{2^{N-1}}) $$ 
		for any prime $p\neq 2$ and a simple computation allows us to conclude that 
		$$ v_{2}(k')\le v_{2}(\tfrac{k}{2^{N-1}}) $$ 
		as well.
		However, the number $N$ (and hence $\frac{k}{2^{N-1}}$) is very easy to compute, while the computation of the number $k'$ can be more involved.
	\end{rem}

	\subsection{The graphs $\mathcal{G}_R$ as graphs $G_R$}
	Now, we show that the graph $\mathcal{G}_R(k)$ is a graph $G_R(l)$ for a well-determined $l \le k$.
	
	First, by using the previous lemmas and proposition we show that the symmetrized connection set $T_{R,k}$ is the connection set $U_{R,\frac{k}{2^N}}$.
	
	\begin{lem} \label{lem: U=T}
		Let $(R, \frak{m})$ be a commutative local ring of odd characteristic with $R/\frak{m}\cong \ff_q$ and let $k \in 2\N$ such that $v_{2}(k)\ge v_{2}(q-1)$.
		Then, for $N$ as in \eqref{eq: N v2k-v2q} we have
		\begin{equation} \label{eq: TRk = URk/2t}
			T_{R,k}= U_{R,\frac{k}{2^N}} = U_{R,k} \cup (-U_{R,k}).
		\end{equation} 
	\end{lem}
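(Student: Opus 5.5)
The second equality $T_{R,k}=U_{R,k}\cup(-U_{R,k})$ is just the definition \eqref{uGRk} of $T_{R,k}$. So the content of the lemma is the identity
$$ U_{R,\frac{k}{2^N}} = U_{R,k}\cup(-U_{R,k}). $$
I plan to obtain it from Proposition \ref{prop: Urk/2 = 2URk}: first use item $(a)$ to lower the $2$-adic valuation of $k$ down to that of $q-1$ without changing the connection set, and then apply item $(b)$ once.

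Set $k_0 = k/2^{N-1}$. Since $N-1 = v_2(k)-v_2(q-1)\ge 0$, this is an integer with $v_2(k_0)=v_2(q-1)$. The hypothesis $v_2(k)\ge v_2(q-1)$ gives $N\ge 1$, and $k_0$ is even because $v_2(q-1)\ge 1$ in odd characteristic. Hence $k/2^N = k_0/2$ is an integer and $U_{R,\frac{k}{2^N}}$ makes sense.

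If $N=1$, then $k_0=k$. Item $(b)$ of Proposition \ref{prop: Urk/2 = 2URk} gives directly $U_{R,\frac k2}=U_{R,k}\cup(-U_{R,k})$, which is the claim.

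If $N\ge 2$, then $v_2(k)>v_2(q-1)$. The chain \eqref{eq: URk2N} from item $(a)$, taken with $j=N-1$, gives $U_{R,k_0}=U_{R,k}$. Now $k_0$ is even and satisfies $v_2(k_0)=v_2(q-1)$, so item $(b)$ applies to $k_0$ in place of $k$:
$$ U_{R,\frac{k_0}{2}} = U_{R,k_0}\cup(-U_{R,k_0}). $$
Substituting $U_{R,k_0}=U_{R,k}$ and $k_0/2=k/2^N$ gives $U_{R,\frac{k}{2^N}}=U_{R,k}\cup(-U_{R,k})$, as desired.

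The only delicate point is the bookkeeping. One must check that the index range in \eqref{eq: URk2N} really reaches $j=N-1$; it does, since the proposition states it for $0\le j\le N-1$. One must also check that the hypotheses of Proposition \ref{prop: Urk/2 = 2URk} hold again for $k_0$ ($k_0\in 2\N$ and $v_2(k_0)\ge v_2(q-1)$), which was verified above. Finiteness of $R$ is never used, matching the standing assumptions of the section.
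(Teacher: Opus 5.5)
Your argument is correct and is essentially the paper's proof. The paper also sets $\kappa=k/2^{N-1}$, uses item $(a)$ of Proposition~\ref{prop: Urk/2 = 2URk} to get $U_{R,k}=U_{R,\kappa}$, and concludes with item $(b)$ applied to $\kappa$; its extra check $T_{R,\kappa}\subseteq U_{R,\kappa/2}$ via $-1\in U_{R,\kappa/2}$ is already made redundant by $(b)$, and your separate treatment of $N=1$ and of the parity of $k_0$ is just cleaner bookkeeping of the same route.

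One caution about your closing remark. Finiteness is used implicitly inside item $(a)$: its proof passes from an injection of $U_{R,k/2}$ onto its subset $U_{R,k}$ to the equality $U_{R,k/2}=U_{R,k}$. The statement really does fail for infinite local rings. For example, take $R=\ff_3[x]_{(x)}$ and $k=4$, so $N=2$. The unit $1+x\in U_{R,1}$ is not in $U_{R,4}\cup(-U_{R,4})$, because $(1+x)g^4=\pm f^4$ is impossible in $\ff_3[x]$ by comparing degrees modulo $4$.
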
	
	
	\begin{proof}
		Let $\kappa$ be defined by
		$$ k= 2^{v_{2}(k)-v_{2}(q-1)} \kappa, $$
		so that $v_{2}(\kappa)=v_{2}(q-1)$. 
		On the other hand, by Lemma \ref{prop: Urk/2 = 2URk} we have that $U_{R,k}=U_{R,\kappa}$ and, hence, $T_{R,k}=T_{R,\kappa}$.
		
		Notice that $v_{2}(\frac{\kappa}{2})<v_{2}(q-1)$, which implies that $-1\in U_{R,\frac{\kappa}{2}}$ by Lemma \ref{lem: condition for -1 in URk, R local}.
		It is clear that $U_{R,\kappa} \subseteq U_{R,\frac{\kappa}{2}}$ and since $-1\in U_{R,\frac{\kappa}{2}}$,
		we obtain that $-U_{R,\kappa} \subseteq U_{R,\frac{\kappa}{2}}$ due to the fact that $U_{R,\frac{\kappa}{2}}$ is closed under multiplication. 
		Thus, we have that 
		$$
		T_{R,\kappa}=U_{R,\kappa}\cup (-U_{R,\kappa}) \subseteq U_{R,\frac{\kappa}{2}}.
		$$
		
		Hence, since $v_{2}(\kappa)=v_{2}(q-1)$, by Proposition \ref{prop: Urk/2 = 2URk} we have that 
		$$ U_{R,\kappa}\cup (-U_{R,\kappa}) = U_{R,\frac{\kappa}{2}}, $$ 
		which implies that $T_{R,k} = U_{R,\frac{\kappa}{2}} = U_{R,\frac{k}{2^N}}$ with $N=v_{2}(k)-v_{2}(q-1)+1$, as desired.
	\end{proof}
	
	We are finally in a position to state and prove the result showing that the graph $\mathcal{G}_{R}(k)$ is either $G_{R}(k)$ or $G_{R}(\tfrac{k}{2^N})$. 
	We state it in arithmetic terms, but the cases depend upon whether the graph $G_R(k)$ is directed or undirected.
	
	\goodbreak 
	
	\begin{thm} \label{teo: Gk and Gk2}
		Let $(R, \frak{m})$ be a commutative local ring with residue field $R/\frak{m} \cong \ff_{q}$ and let $k$ be a positive integer. 
		Then, we have:
		
		\begin{enumerate}[$(a)$]
			\item If  $R$ has characteristic $2$, then $\mathcal{G}_{R}(k)=G_{R}(k)$. \msk 
			
			\item If $R$ has odd characteristic, then 	
			\begin{equation*}
				\mathcal{G}_{R}(k)=
				\begin{cases}
					G_{R}(k)				& \qquad \text{if $v_{2}(k)< v_{2}(q-1)$, i.e.\@ when $G_R(k)$ is undirected}, \\[1.5mm]
					G_{R}(\tfrac{k}{2^N})   & \qquad \text{if $v_{2}(k)\ge v_{2}(q-1)$, i.e.\@ when $G_R(k)$ is directed},
				\end{cases}
			\end{equation*}	
			where $N$ is as in \eqref{eq: N v2k-v2q}. 
			\msk 
			
			\item If $k\mid q-1$ with $q$ odd and $-1\not \in U_{R,k}$, then $G_{R}(k)$ is directed and 
			\begin{equation}\label{eq: Gr Grsim}
				\mathcal{G}_{R}(k) = G_{R}(\tfrac{k}{2}) = \overset{_{\rightarrow}}{G}_{R}(k) \cup \overset{_{\leftarrow}}{G}_{R}(k)
			\end{equation}
			where $\overset{_{\rightarrow}}{G}_{R}(k)=G_{R}(k)$ and $\overset{_{\leftarrow}}{G}_{R}(k)$ is obtained from the digraph $\overset{_{\rightarrow}}{G}_{R}(k)$ by reversing all of its arrows.
		\end{enumerate}
	\end{thm}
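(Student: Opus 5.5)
The proof compares connection sets, since two Cayley graphs on the same group coincide exactly when their connection sets do. In each case the goal is to identify $T_{R,k}=U_{R,k}\cup(-U_{R,k})$ with a suitable $U_{R,l}$.

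For $(a)$, in characteristic $2$ we have $-1=1$. Hence $-U_{R,k}=U_{R,k}$, so $T_{R,k}=U_{R,k}$ and $\mathcal{G}_R(k)=G_R(k)$ immediately.

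For $(b)$, we split according to Lemma \ref{lem: condition for -1 in URk, R local}$(b)$.
\begin{enumerate}[$(i)$]
\item Suppose $v_2(k)<v_2(q-1)$. Then $-1\in U_{R,k}$. Since $U_{R,k}$ is a multiplicative subgroup of $R^*$, this gives $-U_{R,k}=U_{R,k}$. So $T_{R,k}=U_{R,k}$ and $G_R(k)$ is undirected (its connection set is symmetric).
\item Suppose $v_2(k)\ge v_2(q-1)$. Then $-1\notin U_{R,k}$. By Lemma \ref{lem: condition for -1 in URk, R local}$(c)$, $k$ is even and $U_{R,k}\cap(-U_{R,k})=\varnothing$, so $G_R(k)$ is directed. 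Now Lemma \ref{lem: U=T} applies verbatim and yields $T_{R,k}=U_{R,k/2^N}$, hence $\mathcal{G}_R(k)=G_R(k/2^N)$.
\end{enumerate}
The phrasing ``i.e.\ when $G_R(k)$ is (un)directed'' is justified by the remark that $U_{R,k}$ is symmetric if and only if $-1\in U_{R,k}$. The one point to check here is that the paper does not assume $R$ finite, whereas Lemma \ref{lem: U=T} and Proposition \ref{prop: Urk/2 = 2URk} were stated without finiteness. They rely only on Lemmas \ref{lem: U2} and \ref{lem: condition for -1 in URk, R local}, so nothing extra is needed.

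For $(c)$, assume $k\mid q-1$, $q$ odd and $-1\notin U_{R,k}$. By Lemma \ref{lem: condition for -1 in URk, R local}$(b)$ we get $v_2(k)\ge v_2(q-1)$. Since $k\mid q-1$ also gives $v_2(k)\le v_2(q-1)$, we have equality, and therefore $N=1$. Part $(b)$, or directly Proposition \ref{prop: Urk/2 = 2URk}$(b)$, then gives $\mathcal{G}_R(k)=G_R(k/2)$ with $U_{R,k/2}=U_{R,k}\cup(-U_{R,k})$, a disjoint union. The Cayley digraph on $-U_{R,k}$ is exactly the reverse of $G_R(k)$: $w-v\in -U_{R,k}$ if and only if $v-w\in U_{R,k}$. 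So the arc set of $G_R(k/2)$ is the union of the arcs of $\overset{_{\rightarrow}}{G}_R(k)$ and $\overset{_{\leftarrow}}{G}_R(k)$, which proves \eqref{eq: Gr Grsim}. Directedness follows as in $(b)$.

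The only delicate step is the equality $v_2(k)=v_2(q-1)$ in $(c)$, forced by $k\mid q-1$. Everything else is bookkeeping on top of Lemma \ref{lem: U=T}.
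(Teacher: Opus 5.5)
Your proof is correct for finite local rings and is essentially the paper's own argument. Part $(a)$ follows from $-1=1$. Part $(b)$ splits on Lemma~\ref{lem: condition for -1 in URk, R local}$(b)$ and invokes Lemma~\ref{lem: U=T} in the directed case. Part $(c)$ uses the fact that $k\mid q-1$ forces $v_2(k)=v_2(q-1)$, so $N=1$ and $U_{R,k/2}=U_{R,k}\sqcup(-U_{R,k})$. The only deviation is in $(c)$: you check the arc-reversal description directly ($w-v\in -U_{R,k}$ iff $v-w\in U_{R,k}$) instead of quoting Theorem~2.4 of \cite{PV19}, and that is fine.

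The one thing you should retract is the claim that finiteness is not needed.

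\begin{itemize}
\item Lemma~\ref{lem: condition for -1 in URk, R local} carries all of $(b)$ and $(c)$. It is proved with the multiplicative section $s$ and the splitting $\Psi$ from Proposition~\ref{prop: reduction GRk GRk'}.
\item That section and splitting are constructed there only for finite $R$. A multiplicative lift of $\ff_q^*$ exists there because $R^*$ is finite and $|1+\frak m|$ is a power of $p$, prime to $q-1$.
\end{itemize}

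Without such a lift the statement is false. Take $R=\ff_3[x]_{(x^2+1)}$, which is local of characteristic $3$ with residue field $\ff_9$, and take $k=2$.
\begin{itemize}
\item Here $v_2(2)=1<3=v_2(9-1)$, so $(b)$ would give $\mathcal{G}_R(2)=G_R(2)$, i.e.\ $-1\in U_{R,2}$.
\item But any square root of $-1$ in $\ff_3(x)$ is algebraic over $\ff_3$, so it lies in $\ff_3$, where $-1$ is not a square.
\end{itemize}

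The paper's remark that $R$ need not be finite in that section has the same flaw, so your argument is exactly as complete as the paper's. You should state the hypothesis rather than dismiss it: $R$ finite, or more generally $(R/\frak m)^*$ lifting multiplicatively into $R^*$.
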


	\begin{proof}
		\noindent $(a)$ 
		It is clear that if $-1\in U_{R,k}$, then $U_{R,k}=T_{R,k}$ since in this case $U_{R,k}=-U_{R,k}$ and hence $G_{R}(k)=\mathcal{G}_{R}(k)$. Thus, if $R$ has characteristic $2$, then
		$G_{R}(k)=\mathcal{G}_{R}(k)$, since $-1=1\in U_{R,k}$, which implies ($a$). 
		
		\smallskip 
		
		\noindent $(b)$ 
		On the other hand, if $R$ has odd characteristic the Lemma \ref{lem: condition for -1 in URk, R local} states that $-1\in U_{R,k}$ if and only $v_{2}(k)<v_{2}(q-1)$.
		
		Notice that if $-1\not \in U_{R,k}$, then $R$ must have odd characteristic and $k$ must be even, 
		by Lemma~\ref{lem: U=T} we obtain that $U_{R,\frac{k}{2^N}}=T_{R,k}$ where $N=v_{2}(k)-v_{2}(q-1)+1$ and hence $G_{R}(\frac{k}{2^N})=\mathcal{G}_{R}(k)$, as asserted. 
		
		Notice that $(b)$ in Lemma \ref{lem: condition for -1 in URk, R local} implies that $G_{R}(k)$ is undirected if and only if the inequality $v_2(k)<v_2(q-1)$ holds.
		
		\smallskip 
		
		\noindent $(c)$ 
		It is clear that if $k\mid q-1$ and $-1\not \in U_{R,k}$ with $q$ odd, then $v_{2}(k)=v_{2}(q-1)$. Thus, we obtain that $t=1$ and so by item ($b$) we have 
		$$
		\mathcal{G}_{R}(k)=G_{R}(\tfrac{k}{2}).
		$$
		On the other hand, by $(c)$ of Lemma \ref{lem: condition for -1 in URk, R local}, 
		we have that $U_{R,k}\cap (-U_{R,k}) = \varnothing$. 
		Also, by Lemma \ref{lem: U=T} we have that 
		$$ T_{R,k} = U_{R,\frac k2} = U_{R,k} \cup (-U_{R,k}).$$ 
		Therefore, Theorem 2.4 in \cite{PV19} implies the equality \eqref{eq: Gr Grsim}.
	\end{proof}
	
	Some additional comments about the theorem are worth giving.
	\begin{rem}
		From Theorem \ref{teo: Gk and Gk2} we conclude that, when $R$ is a local ring of odd characteristic, the study of the symmetrized graphs $\mathcal{G}_{R}$ is reduced to the one of the graphs $G_{R}$. 
	\end{rem}
	
	\begin{rem}
		Item ($c$) from the above theorem generalizes Theorem 3.5 in \cite{PV19} for GP-graphs. Indeed, taking $R=\ff_q$ in ($c$) of Theorem \ref{teo: Gk and Gk2}  we obtain Theorem 3.5 from \cite{PV19}.
	\end{rem}
	
	We now illustrate item ($a$) of the previous theorem.
	\begin{exam}
		Instances of commutative rings $R$ of characteristic 2 are the finite fields $\ff_{2^m}$, the polynomials $\Z_2[x_1,\ldots, x_k]$, the quotient rings $\ff_2[x]/(f(x))$ with $f$ a polynomial, the Boolean rings in which $x^2=x$ for every $x$, the ring of series $\ff_2[\![x]\!]$. For all these rings we have that $\mathcal{G}_R(k)=G_R(k)$.
		\hfill $\diamond$	
	\end{exam}
	
	The following example shows that the above theorem is not true when $R$ has even characteristic greater than $2$.
	\begin{exam} \label{exam: not true for Cal GRk}
		Consider the local ring $R=\mathbb{Z}_{8}$, with maximal ideal $\frak{m}=\{0,2,4,6\}$. 
		First, notice that there are only two 
		different graphs $G_{\mathbb{Z}_{8}}(k)$ up to isomorphism  for any $k \in \N$, namely 
		$G_{\mathbb{Z}_8}(1)$ and $G_{\mathbb{Z}_8}(2)$. 
		Indeed, by Proposition \ref{prop: reduction GRk GRk'}, for any $k=2^h\ell$ with $\ell$ odd we have that $k'=2^{\min\{2,h\}}$, since $\ell'=1$. Thus, we have 
		$$ G_{\Z_8}(j) = G_{\Z_8}(1), \qquad G_{\Z_8}(2j) = G_{\Z_8}(2) \qquad \text{and} \qquad G_{\Z_8}(4j) = G_{\Z_8}(4), $$
		for any odd $j$.

		Now, notice that $G_{\Z_8}(2)$ is the directed 8-cycle, since $a^{2}=1\pmod{8}$ for any $a\in \mathbb{Z}_{8}^*$ (and hence $a^4=1\pmod{8}$ for any $a\in \mathbb{Z}_{8}^*$),
		while $G_{\Z_8}(1)$ is the complete bipartite graph with 8 vertices 
		(see Example~\ref{exam: comp bip char 2}). 
		That is, 
		$$ G_{\Z_8}(1) = K_{4,4} \qquad \text{and} \qquad G_{\Z_8}(2)= G_{\Z_8}(4) = \vec{C}_{8}.$$ 
		On the other hand, we have that 
		$$ \mathcal{G}_{\Z_8}(2) = C_8 $$ 
		is the 8-cycle graph. Thus, the graph $\mathcal{G}_{\Z_8}(2)$ is not of the form $G_{\Z_8}(k)$ for any $k$. 
		\hfill $\diamond$
	\end{exam}

	\section{Blow-up decompositions of $G_R$ and $\mathcal{G}_R$} \label{sec: blowups}
	Here we show that, under the assumptions 
	$$ \text{$(R,\frak m)$ a \textsl{finite commutative local ring with unity} \quad and \quad $(k,|R|)=1$},$$
	the graphs $G_R(k)$ and $\mathcal{G}_R(k)$ are balanced blow-ups of the associated graphs $G_{R/\frak{m}}(k)$ and $\mathcal{G}_{R/\frak{m}}(k)$ defined over the residue field $R/\frak m$, respectively. 
	We point out that the hypothesis $(k,|R|)=1$ is crucial to get the blow-ups.

	\subsection{Blow-up decomposition for the graphs $G_R$} 
	As in the case $k=2$ shown by Liu-Zhou in \cite{LZ2}, 
	we recently showed in \cite{PV9} that there is a direct relationship between the graph $G_R(k)$ and the graph $G_{R/\frak{m}}(k)$ defined over the residue finite field $R/\frak{m}$.
	Namely, that one can recover $G_R(k)$ from $G_{R/\frak{m}}(k)$ either by performing a blow-up of order $m$ or by tensoring with $\mathring{K}_m$. 
	
	We now recall this result, using the notations of \S\ref{subsec: preliminares} in the Introduction. 
	
	\begin{thm}[{\cite[Theorem 2.3]{PV9}}] \label{teo: blowup GRk}
		Let $(R,\frak{m})$ be a finite commutative local ring with $m=|\frak{m}|$ and residue field $R/\frak{m} \simeq \ff_q$. If $k\in \mathbb{N}$ satisfies $(k,|R|)=1$, then
		\begin{equation} \label{eq: blowup R local}
			G_R(k) \simeq {G_{\ff_q}(k)}^{(m)} \simeq G_{\ff_q}(k) \otimes \mathring{K}_{m},  
		\end{equation} 
		In particular, $G_R(k)$ is $\frac{m(q-1)}{k'}$-regular where $k'=(k,q-1)$. 
	\end{thm}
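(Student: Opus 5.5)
The plan is to show that the connection set $U_{R,k}$ is a union of full cosets of $\frak m$, namely
$$ U_{R,k} = \pi^{-1}\big(U_{\ff_q,k}\big), \qquad \text{where } \pi: R\to R/\frak m\simeq \ff_q \text{ is the quotient map.} $$
Once this holds, adjacency $x\to y$ in $G_R(k)$ depends only on the classes $\pi(x),\pi(y)$. Explicitly, $y-x\in U_{R,k}$ if and only if $\pi(y)-\pi(x)\in U_{\ff_q,k}$. Fix a set-theoretic bijection $R\to \ff_q\times \frak m$, $x\mapsto (\pi(x), x-s(\pi(x)))$, using any section $s$. Under it, $x,y$ are adjacent exactly when their first coordinates are adjacent in $G_{\ff_q}(k)$, with no condition on the second coordinates. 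This is precisely $G_{\ff_q}(k)\otimes \mathring{K}_m$, since $\mathring K_m$ has every ordered pair as an edge, loops included. The isomorphism with the blow-up ${G_{\ff_q}(k)}^{(m)}$ then follows from \eqref{eq: blowup}. The fibers are independent because $0\notin U_{\ff_q,k}$.

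To prove the coset identity, one inclusion is immediate: $\pi(x^k)=\pi(x)^k$, and $x\in R^*$ if and only if $\pi(x)\neq 0$, so $\pi(U_{R,k})\subseteq U_{\ff_q,k}$. For the reverse, I use the splitting $\Psi: R^*\simeq \ff_q^*\times(1+\frak m)$ from \eqref{eq: Psi isomorfismo}. Since $|1+\frak m|=m$ is a power of $p$ and $(k,|R|)=1$ forces $(k,p)=1$, the map $c\mapsto c^k$ is an automorphism of the group $1+\frak m$. Hence
$$ \Psi(U_{R,k}) = U_{\ff_q,k}\times (1+\frak m) = \Psi\big(\pi^{-1}(U_{\ff_q,k})\big), $$
where the second equality holds because $\pi^{-1}(U_{\ff_q,k})\subseteq R^*$ and $\Psi$ preserves the first coordinate. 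This step is the heart of the argument, and it is the only place the hypothesis $(k,|R|)=1$ is used. One subtle point is that the section $s$ must be multiplicative on units, as assumed in the paper's construction. If one prefers to avoid that, a counting argument works instead. The $k$-th power map on $R^*$ has kernel of size $|\{\zeta\in\ff_q^*:\zeta^k=1\}|=(k,q-1)$, because the $p$-part $1+\frak m$ contributes no $k$-torsion. This gives $|U_{R,k}|=\frac{(q-1)m}{(k,q-1)}=|\pi^{-1}(U_{\ff_q,k})|$, and combined with the inclusion already shown, equality follows.

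Finally, the regularity claim follows from $|U_{R,k}|=m\,|U_{\ff_q,k}|=\frac{m(q-1)}{k'}$, using \eqref{eq: Gkq red}. Because $G_R(k)$ is a Cayley graph, this value is both the in-degree and the out-degree of every vertex.
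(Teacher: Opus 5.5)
Your proof is correct and takes essentially the same route as the argument the paper cites from \cite{PV9} (recalled here as Lemma~\ref{lem: m-cosets} and Proposition~\ref{Prop mod Local case}). In both, the key fact is that $k$-th powering is bijective on $1+\frak{m}$, equivalently that $g_a\colon a+\frak{m}\to a^k+\frak{m}$ is a bijection; hence $U_{R,k}$ is a union of full $\frak{m}$-cosets, adjacency depends only on residue classes, and the blow-up and degree count follow. On the subtle point you raised, a multiplicative section always exists because $1+\frak{m}$ is the Sylow $p$-subgroup of $R^*$, so $R^*=(1+\frak{m})\times T$ with $T\simeq\ff_q^*$; the paper's extra condition $s([-1])=-1$ is not needed and cannot hold for $R=\Z_4$, and your counting fallback avoids the issue altogether.
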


	For instance, we have the following examples in different characteristics. We begin with characteristic 2. 
	
	\begin{exam} \label{exam: comp bip char 2}
		Let $(R, \frak{m})$ be a finite commutative local ring with characteristic $2$ and let $k \in 2\N+1$. Then,
		$$ G_{R}(k) \simeq \G(k,2)^{(|R|/2)} \simeq K_{\frac{|R|}{2},\frac{|R|}{2}}. $$
		That is, $G_{R}(k)$ is the complete bipartite graph with partitions of size $\tfrac 12  |R|$, 
		since in this case we have $R/\frak{m} = \mathbb{F}_{2}$, $|\frak{m}| = \frac 12 |R|$, and $\G(k,2)=K_2$. 
		
		In particular, for any $k\in \mathbb{N}$ odd we have
		$$	G_{\mathbb{Z}_{2^{\alpha}}}(k)\simeq K_{2^{\alpha-1},2^{\alpha-1}} .$$
		Notice that in this case, the graph has three different eigenvalues, namely $\pm 2^{\alpha -1}$ and $0$ and so it is strongly regular with parameters $srg(2^{\alpha},2^{\alpha-1},0,2^{\alpha-1})$.
		\hfill $\diamond$
	\end{exam}
	
	In odd characteristic, we have the next example.
	\begin{exam} \label{exam: mulipatito}
		Let $(R,m)$ be a local ring with prime residue field $\ff_p$ and let $k\in \N$ with $k$ coprime with $p$ and $p-1$, that is $(k,p(p-1))=1$. 
		Then, we obtain that 
		$$ G_{R}(k)=G_{\ff_p}(k)^{(|R|/p)}.$$ 
		We have that $G_{\ff_p}(k)=K_p$ (the complete graph with $p$ vertices), and so $G_R(k)$ is the complete multipartite graph $K_{p \times m}$.
		
		As a particular case when $k=1$, if $|R|=p^r$ we obtain that 
		$$ G_R(1) = G_{\ff_p}(1)^{(p^r/p)} = K_{p \times p^{r-1}}$$ 
		is the complete multipartite graph with $p$ independent sets of size $p^{r-1}$.
		\hfill $\diamond$
	\end{exam}

	We will need the following notation. An integer $n$ is a \textit{primitive divisor} of $p^m-1$ if $n\mid p^m-1$ and $n\nmid p^{a}-1$ for all $1\le a<m$. 
	For simplicity, as in our previous works \cite{PV7}, \cite{PV9}, \cite{PV18}, \cite{PV19}, we denote this fact by  
	\begin{equation} \label{eq: primitive divisor}
		n\dagger p^m-1.
	\end{equation} 
	Also, it is well-known that 
	\begin{equation} \label{eq: connectivity condition}
		\G(k,q) \text{ is connected} \qquad \Leftrightarrow \qquad n \dagger q-1,
	\end{equation}
	where $n$ is the regularity degree of $\G(k,q)$, that is 
	$$n=\frac{q-1}{k'} \qquad \text{ where } \qquad k'= (k,q-1).$$

	Now, we recall some structural consequences for the graph $G_{R}(k)$ when $R$ is a finite commutative local ring.

	\begin{coro}[{\cite[Corollary 2.5]{PV9}}] \label{-1RFq}
		Let $(R,\frak{m})$ be a finite commutative local ring with $m=|\frak{m}|$ and residue field $R/\frak{m} \simeq \ff_q$. 
		Let $k\in \mathbb{N}$ be such that $(k,|R|)=1$. 
		Then, 
		\begin{enumerate}[$(a)$]
			\item $-1\in U_{R,k}$ if and only if $-1\in U_{R/\frak m,k}$, 
			which occurs if and only if $q$ is even or else if $q$ is odd and $(k,q-1) \mid \frac{q-1}{2}$. \sk
			
			\item $G_R(k)$ is undirected if and only if $G_{R/\frak m}(k)$ is undirected. \sk 
			
			\item $G_{R}(k)$ is connected if and only if  $G_{R/\frak m}(k)$ is connected. This in turn happens if and only if $\frac{q-1}{(k,q-1)} \dagger q-1$. 
		\end{enumerate}
	\end{coro}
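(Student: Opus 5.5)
We deduce all three items from the blow-up decomposition of Theorem~\ref{teo: blowup GRk}, $G_R(k)\simeq G_{\ff_q}(k)\otimes \mathring{K}_m$, and read off the arithmetic conditions from the known facts on GP-graphs recalled in the Introduction. The hypothesis $(k,|R|)=1$ is used only to place ourselves in the setting of Theorem~\ref{teo: blowup GRk}. It also ensures that $k$ is coprime to $p$, so that raising to the $k$-th power is a bijection on $1+\frak m$, a $p$-group.

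For $(a)$, we use the splitting $\Psi: R^*\to (R/\frak m)^*\times(1+\frak m)$ from \eqref{eq: Psi isomorfismo}, with $\Psi(-1)=([-1],1)$. Since $x\mapsto x^k$ is a bijection on $1+\frak m$, we get $\Psi(U_{R,k})=U_{\ff_q,k}\times(1+\frak m)$. Hence $-1\in U_{R,k}$ if and only if $[-1]\in U_{\ff_q,k}$. This argument works in any characteristic, so Lemma~\ref{lem: condition for -1 in URk, R local} is not needed. If $q$ is even, then $[-1]=[1]$ is trivially a $k$-th power. If $q$ is odd, we use $U_{\ff_q,k}=U_{\ff_q,k'}$ with $k'=(k,q-1)$, by \eqref{eq: Gkq red}. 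This is the subgroup of index $k'$ in the cyclic group $\ff_q^*$, and it contains the unique element $-1$ of order $2$ exactly when $2 \mid \frac{q-1}{k'}$, i.e.\ when $k'\mid \frac{q-1}{2}$.

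For $(b)$, we note that a Cayley graph $X(G,S)$ is undirected if and only if $-S=S$. Since $U_{R,k}$ is a multiplicative group, this holds if and only if $-1\in U_{R,k}$, and likewise over $\ff_q$. Item $(b)$ then follows immediately from $(a)$. Alternatively, it follows from the decomposition, since $\mathring{K}_m$ is undirected.

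For $(c)$, we use the description $G_R(k)\simeq G_{\ff_q}(k)^{(m)}$: vertex $x$ is replaced by the class $x+\frak m$, and arcs between distinct classes are complete. There are no arcs inside a class, because $U_{R,k}\cap\frak m=\varnothing$.
\begin{enumerate}[$(i)$]
\item If $G_{\ff_q}(k)$ is (strongly) connected, we join two vertices as follows. If they lie in different classes, we follow a directed path between the classes, choosing any representatives. If they lie in the same class $x+\frak m$, we go to a neighbouring class and back by a directed path; such a class exists because $G_{\ff_q}(k)$ is connected with $q\ge 2$ vertices. Hence $G_R(k)$ is connected.
\item Conversely, the projection $\pi:R\to\ff_q$ maps arcs to arcs. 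Hence every walk in $G_R(k)$ projects to a walk in $G_{\ff_q}(k)$, and since $\pi$ is surjective, connectedness descends to $G_{\ff_q}(k)$.
\end{enumerate}
Finally, the equivalence with $\frac{q-1}{(k,q-1)}\dagger q-1$ is exactly \eqref{eq: connectivity condition}.

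The only delicate point is the directed case in $(c)$. There one must check that returning to the same class is possible, which is guaranteed by strong connectivity of $G_{\ff_q}(k)$ together with $q\ge2$. It is also worth noting that for a finite Cayley digraph, weak and strong connectivity coincide, because $S$ generates the same subgroup whether or not $-S$ is included. The remaining steps are routine.
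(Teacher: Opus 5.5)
Your proposal is correct and follows essentially the route the paper has in mind: the corollary is quoted from \cite{PV9} as a consequence of the blow-up Theorem~\ref{teo: blowup GRk}, and your argument for $(a)$ is the $\Psi$-argument of Lemma~\ref{lem: condition for -1 in URk, R local}$(a)$ combined with the bijectivity of $x\mapsto x^k$ on $1+\frak{m}$ (the content of Lemma~\ref{lem: m-cosets}$(a)$). One small slip: for $q$ even with $-1\neq 1$ in $R$ (e.g.\ $R=\Z_4$), no section can satisfy both $s([-1])=-1$ and $s([1])=1$, and a multiplicative section gives $\Psi(-1)=([1],-1)$ rather than $([-1],1)$; this is harmless, because you only need that the first coordinate of $\Psi(u)$ is $[u]$ together with $\Psi(U_{R,k})=U_{\ff_q,k}\times(1+\frak{m})$ (and in that case $k$ is odd anyway, so $-1=(-1)^k\in U_{R,k}$).
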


	\subsection{Blow-up decomposition for the symmetrized graphs $\mathcal{G}_R$} 
	In order to obtain the blow-up decomposition for the graph $\mathcal{G}_R(k)$, with $R$ local and $(k,|R|)=1$, we need first to recall some results previously obtained in \cite{PV9}.
	
	\begin{lem}[{\cite[Lemma 2.1]{PV9}}] \label{lem: m-cosets}
		Let $(R,\frak{m})$ be a finite commutative local ring with identity
		with residue field $R/\frak{m} \simeq \ff_q$. 
		Let $k\in \mathbb{N}$ be such that $(k,|R|)=1$.
		Then, we have: 
		\begin{enumerate}[$(a)$]
			\item For every $a\in R^*$ the map $g_a : a+\frak{m} \rightarrow a^{k}+\frak{m}$ given by $g_a(x)=x^k$ is a bijection. 
			\smallskip
			
			\item Let $a,b\in R$ be such that $b-a\in U_{R,k}$. 
			If $a\equiv c\pmod{\frak{m}}$ and $b\equiv d\pmod{\frak{m}}$, then we have $d-c\in U_{R,k}$. 
		\end{enumerate}
	\end{lem}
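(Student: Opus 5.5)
Both parts rest on one structural fact about the unit group: since $(k,|R|)=1$, the $k$-th power map $x\mapsto x^k$ is an automorphism of the subgroup $1+\frak m$ of $R^*$, because $|1+\frak m| = |\frak m| = m$ is a power of $p$ and $p\nmid k$. For part $(a)$, fix $a\in R^*$ and note that $a+\frak m = a(1+\frak m)$ and $a^k+\frak m = a^k(1+\frak m)$. For $y=1+z$ with $z\in\frak m$ we have $(ay)^k = a^k y^k$, so $g_a$ factors as
$$ a(1+\frak m) \xrightarrow{\ \cdot a^{-1}\ } 1+\frak m \xrightarrow{\ y\mapsto y^k\ } 1+\frak m \xrightarrow{\ \cdot a^k\ } a^k(1+\frak m). $$
The two outer maps are bijections. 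The middle map is a group endomorphism of the finite abelian group $1+\frak m$. Its kernel consists of elements whose order divides both $k$ and $|1+\frak m| = m$; since $(k,m)=1$, the kernel is trivial, so the map is injective and hence bijective. Therefore $g_a$ is a bijection. The only point to check is that $a+\frak m\subseteq R^*$, which holds because $R$ is local.

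For part $(b)$, suppose $b-a = u^k$ with $u\in R^*$. Then $d-c = (b-a) + \big((d-b)-(c-a)\big) \in u^k + \frak m$. By $(a)$ applied to $u$, the map $g_u$ is surjective onto $u^k+\frak m$, so there is some $x\in u+\frak m\subseteq R^*$ with $x^k = d-c$. Hence $d-c\in U_{R,k}$.

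The only substantive step is the injectivity of the $k$-th power map on $1+\frak m$, which reduces to the coprimality of $k$ with $m=|1+\frak m|$ via Lagrange's theorem. Everything else is bookkeeping with cosets of $\frak m$.
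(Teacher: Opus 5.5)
Your proof is correct and complete. This paper does not prove the lemma itself (it is imported from \cite{PV9}), so there is no in-text argument to compare yours with. Your route is the natural one: write $a+\frak{m}=a(1+\frak{m})$, and note that $y\mapsto y^k$ is an injective endomorphism of the group $1+\frak{m}$ because $(k,|1+\frak{m}|)=(k,|\frak{m}|)=1$. It also fits the paper's own toolkit. It is the coprime counterpart of the Lagrange argument on $1+\frak{m}$ in the proof of Proposition~\ref{prop: reduction GRk GRk'}. In terms of the splitting $\Psi$ used there, it says precisely that $\Psi(U_{R,k})=U_{\ff_q,k}\times(1+\frak{m})$. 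Equivalently, $U_{R,k}$ is the full preimage of $U_{\ff_q,k}$ under $R\to R/\frak{m}$, which is what underlies Theorem~\ref{teo: blowup GRk}.

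Two minor points. First, $(k,|\frak{m}|)=1$ follows at once from $|\frak{m}|\mid |R|$, so you need not invoke that $|\frak{m}|$ is a power of $p$. Second, the fact that $1+\frak{m}$ is a subgroup of $R^*$ (because $\frak{m}$ is nilpotent, or is the Jacobson radical) deserves one line. Once that is in place, $a+\frak{m}=a(1+\frak{m})\subseteq R^*$ is automatic, and the separate check you mention is unnecessary.

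If you want an argument that avoids group orders altogether, take $x,y\in a+\frak{m}$. Then $x^k-y^k=(x-y)\sum_{i=0}^{k-1}x^iy^{k-1-i}$, and the sum is congruent to $ka^{k-1}$ modulo $\frak{m}$, hence a unit since $p\nmid k$. So $x^k=y^k$ forces $x=y$, and $g_a$ is an injection between two sets of size $|\frak{m}|$.
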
 
	
	By changing ``arcs'' by ``edges'' in Proposition~2.2 from \cite{PV9}, we obtain the following result.
	We use the notation $E(\mathcal{G}_R)$ to denote the set of edges in a graph $\mathcal{G}_R$.
	
	\begin{prop} \label{Prop mod Local case}
		Let $(R,\frak{m})$ be a finite commutative local ring with identity and let $k\in \mathbb{N}$ be such that $(k,|R|)=1$. 
		If $ab \in E(\mathcal{G}_R(k))$ then $cd \in E(\mathcal{G}_R(k))$ for every $c\in a+\frak m$ and $d\in b+\frak m$. The vertices in each coset $a+\frak m$ of $\frak m$ are independent. \hfill $\square$
	\end{prop}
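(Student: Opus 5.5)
The plan is to reduce everything to Lemma \ref{lem: m-cosets}$(b)$, which treats the connection set $U_{R,k}$, and then pass to $T_{R,k}=U_{R,k}\cup(-U_{R,k})$ by a sign symmetry.

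First I take an edge $ab\in E(\mathcal{G}_R(k))$. By the definition of $\mathcal{G}_R(k)$, this means $b-a\in T_{R,k}$. So either $b-a\in U_{R,k}$ or $a-b\in U_{R,k}$. Now let $c\in a+\frak m$ and $d\in b+\frak m$.

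In the first case, Lemma \ref{lem: m-cosets}$(b)$ applies directly to the pair $(a,b)$ with $a\equiv c$ and $b\equiv d \pmod{\frak m}$, and gives $d-c\in U_{R,k}\subseteq T_{R,k}$. In the second case, I apply the same lemma to the pair $(b,a)$, with $b\equiv d$ and $a\equiv c$. This gives $c-d\in U_{R,k}$, so $d-c\in -U_{R,k}\subseteq T_{R,k}$. In both cases $cd$ is an edge of $\mathcal{G}_R(k)$. The only point needing care is the orientation in Lemma \ref{lem: m-cosets}$(b)$: it is stated for ordered pairs, so it must be applied with the roles of the vertices swapped in the second case.

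For the independence statement, suppose $c,d$ lie in the same coset $a+\frak m$. Then $d-c\in\frak m$. But every element of $T_{R,k}$ is a unit, since $\pm x^k$ is a unit for $x\in R^*$. Because $R$ is local, $\frak m\cap R^*=\varnothing$, so $d-c\notin T_{R,k}$ and $c,d$ are not adjacent. This is immediate and uses neither the hypothesis $(k,|R|)=1$ nor the lemma. I do not see a genuine obstacle anywhere; the whole proposition is a symmetrization of Proposition~2.2 of \cite{PV9}.
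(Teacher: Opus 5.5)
Your argument is correct and is essentially the paper's route. The paper gives no separate proof: it says the result follows from Proposition~2.2 of \cite{PV9} by replacing arcs with edges. That amounts to what you do, namely applying Lemma~\ref{lem: m-cosets}$(b)$ to whichever orientation of $b-a\in U_{R,k}\cup(-U_{R,k})$ holds, and noting that $T_{R,k}\subseteq R^*$ while $\frak m\cap R^*=\varnothing$ gives the independence of the cosets.
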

	
	Finally, we can state and prove the blow-up decomposition for the graphs $\mathcal{G}_R(k)$, in the notation of \S\ref{subsec: preliminares}. 
	
	\begin{thm} \label{teo: blowup mathcalGRk}
		Let $(R,\frak{m})$ be a finite commutative local ring with $|\frak{m}|=m$ and residue field $R/\frak{m} \simeq \ff_q$. 
		If $k\in \mathbb{N}$ satisfies $(k,|R|)=1$, then
		\begin{equation} \label{eq: blowup symmetrized GRk}
			\mathcal{G}_R(k) \simeq \mathcal{G}_{\ff_{q}}(k)^{(m)}.     
		\end{equation} 
		
		Furthermore, if $R$ has odd characteristic then 
		$$ \mathcal{G}_R(k) \simeq \begin{cases}
			{G_{\ff_q}(k)}^{(m)} 				& \qquad \text{if $v_{2}(k)< v_{2}(q-1)$}, \\[1mm]
			{G_{\ff_q}(\tfrac{k}{2^N})}^{(m)}   & \qquad \text{if $v_{2}(k)\ge v_{2}(q-1)$},
		\end{cases} $$
		where $N$ is as in \eqref{eq: N v2k-v2q}. 
		In particular, the graph $\mathcal{G}_R(k)$ is $\frac{m(q-1)}{k'}$-regular or $\frac{2m(q-1)}{k'}$-regular respectively, where $k'=(k,q-1)$.
	\end{thm}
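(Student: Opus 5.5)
The plan is to prove \eqref{eq: blowup symmetrized GRk} directly, in the same way Theorem \ref{teo: blowup GRk} is obtained. The main tool is the reduction map $\pi: R \to R/\frak m \simeq \ff_q$. Fix a set of coset representatives and a bijection $\frak m \to \{1,\ldots,m\}$. Every element of $R$ is then uniquely of the form $a+x$, with $a$ a representative and $x\in\frak m$. This gives a bijection
$$R \to \ff_q \times \{1,\dots,m\}, \qquad a+x\mapsto ([a],x),$$
that is, a bijection between the vertex sets of $\mathcal{G}_R(k)$ and $\mathcal{G}_{\ff_q}(k)\otimes\mathring{K}_m \simeq \mathcal{G}_{\ff_q}(k)^{(m)}$ (see \eqref{eq: blowup}). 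It then remains to check that $b-a\in T_{R,k}$ if and only if $[b]-[a]\in T_{\ff_q,k}$. The second coordinate imposes no condition, since $\mathring{K}_m$ has all loops and all edges.

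For the edge correspondence, I would first show that $\pi(T_{R,k}) = T_{\ff_q,k}$, and then that $T_{R,k}$ is a union of full cosets of $\frak m$. The first claim is clear: $\pi$ is a surjective ring map with $\pi(R^*)=\ff_q^*$, since every lift of a nonzero residue is a unit. Hence $\pi(U_{R,k})=U_{\ff_q,k}$, and $\pi(-U_{R,k}) = -U_{\ff_q,k}$. For the second claim, use Lemma \ref{lem: m-cosets}$(a)$: for $a\in R^*$, the map $x\mapsto x^k$ sends $a+\frak m$ bijectively onto $a^k+\frak m$. So $a^k+\frak m\subseteq U_{R,k}$, and $U_{R,k}$ is a union of $\frak m$-cosets; so is $-U_{R,k}$, and hence $T_{R,k}$. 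Consequently $b-a\in T_{R,k}$ iff $\pi(b-a)\in \pi(T_{R,k}) = T_{\ff_q,k}$. Alternatively, I could quote Proposition \ref{Prop mod Local case} for the coset invariance of edges and the independence of each coset; that independence matches the fact that $0\notin T_{\ff_q,k}$. This step is the only real content, and I do not expect a genuine obstacle. The hypothesis $(k,|R|)=1$ enters exactly through Lemma \ref{lem: m-cosets}.

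For the second assertion, assume $R$ has odd characteristic, so $q$ is odd. Apply Theorem \ref{teo: Gk and Gk2}$(b)$ to the local ring $\ff_q$ itself, whose residue field is $\ff_q$. This gives $\mathcal{G}_{\ff_q}(k)=G_{\ff_q}(k)$ when $v_2(k)<v_2(q-1)$, and $\mathcal{G}_{\ff_q}(k)=G_{\ff_q}(k/2^N)$ otherwise. Substituting into \eqref{eq: blowup symmetrized GRk} gives the two cases.

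For the regularity, a blow-up of order $m$ of an $r$-regular graph is $mr$-regular. In the first case $r=|U_{\ff_q,k}|=(q-1)/k'$. In the second case $r=|T_{\ff_q,k}|=2|U_{\ff_q,k}|=2(q-1)/k'$. This follows from Proposition \ref{prop: Urk/2 = 2URk}$(c)$ applied to $\ff_q$, or directly from the disjointness $U_{\ff_q,k}\cap(-U_{\ff_q,k})=\varnothing$ in Lemma \ref{lem: condition for -1 in URk, R local}$(c)$.
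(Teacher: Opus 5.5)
Your proposal is correct and follows essentially the same route as the paper. The blow-up comes from coset invariance of edges: the paper simply cites Proposition \ref{Prop mod Local case}, while you derive it from Lemma \ref{lem: m-cosets}$(a)$ and also check explicitly that $\pi(T_{R,k})=T_{\ff_q,k}$, a step the paper leaves implicit. The odd-characteristic cases then follow by applying Theorem \ref{teo: Gk and Gk2} to $\ff_q$ itself, and the regularity is a count; the only minor difference is that you compute the degree in $\ff_q$ and multiply by $m$, whereas the paper computes $|T_{R,k}|$ directly in $R$ via Lemma \ref{lem: U=T} and Theorem \ref{teo: blowup GRk}.
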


	\begin{proof}
		Since $(k,|R|)=1$, by Proposition \ref{Prop mod Local case},
		$\mathcal{G}_{R}(k)$ is the balanced blow-up of order $m$ of $\mathcal{G}_{\ff_q}(k)$, where the independent sets are all the cosets of $\frak m$ in $R$, and hence \eqref{eq: blowup symmetrized GRk} holds. 
		
		Now, let us assume that $R$ has odd characteristic. 
		By Theorem \ref{teo: Gk and Gk2}, we have that 
		$$	\mathcal{G}_{\ff_q}(k)=
		\begin{cases}
			G_{\ff_q}(k) 				& \qquad \text{if $v_{2}(k)< v_{2}(q-1)$}, \\[1mm]
			G_{\ff_q}(\tfrac{k}{2^t})   & \qquad \text{if $v_{2}(k)\ge v_{2}(q-1)$}.
		\end{cases} 
		$$
		Since $(k,|R|)=(\frac{k}{2^t},|R|)=1$, Theorem \ref{teo: blowup GRk} implies that
		$$ G_{R}(k) \simeq G_{\ff_q}(k)^{(m)} 
		\qquad \text{and} \qquad  
		G_{\ff_q}(\tfrac{k}{2^t}) \simeq G_{\ff_q}(\tfrac{k}{2^t})^{(m)}, $$ 
		whose independent sets are all the cosets of $\frak m$ in $R$.
		Thus, by \eqref{eq: blowup symmetrized GRk} we have
		$$	\mathcal{G}_R(k) \simeq \mathcal{G}_{\ff_{q}}(k)^{(m)} \simeq 
		\begin{cases}
			{G_{\ff_q}(k)}^{(m)} 				& \qquad \text{if $v_{2}(k)< v_{2}(q-1)$}, \\[1mm]
			{G_{\ff_q}(\tfrac{k}{2^t})}^{(m)}   & \qquad \text{if $v_{2}(k)\ge v_{2}(q-1)$},
		\end{cases}  
		$$
		as asserted. 
		
		Finally, recall that the regularity degree of $\mathcal{G}_{R}(k)$ is given by $|T_{R,k}|$. By Lemmas \ref{prop: Urk/2 = 2URk} and \ref{lem: U=T} we have that
		$$ |T_{R,k}|=
		\begin{cases}
			|U_{R,k}|  & \qquad \text{if $v_{2}(k)< v_{2}(q-1)$}, \\[1mm]
			2|U_{R,k}| & \qquad \text{if $v_{2}(k)\ge v_{2}(q-1)$}.
		\end{cases} $$
		Therefore, by Theorem \ref{teo: blowup GRk} we have that $|U_{R,k}|=\frac{m(q-1)}{k'}$, where $k'=(k,q-1)$, which implies the last assertion in the statement.
	\end{proof}

	As a direct consequence of the above theorem, we obtain the following useful result.
	
	\begin{coro} \label{coro: Local case mathcal G}
		Let $(R,\frak{m})$ be a finite commutative local ring with residue field $R/\frak{m} \simeq \ff_q$. 
		If $R$ has odd characteristic and $k\in \mathbb{N}$ satisfies $(k,|R|)=1$, then
		\begin{equation} \label{eq: mathcalGRk = casos GRks}
			\mathcal{G}_R(k) \simeq \begin{cases}
				{\G(k',q)}^{(m)} 			& \qquad \text{if $v_{2}(k)< v_{2}(q-1)$}, \\[1mm]
				{\G(\tfrac{k'}{2},q)}^{(m)} & \qquad \text{if $v_{2}(k)\ge v_{2}(q-1)$},
			\end{cases}
		\end{equation}
		where $k'=(k,q-1)$ and $m=|\frak{m}|$.
	\end{coro}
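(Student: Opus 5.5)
The plan is to combine Theorem \ref{teo: blowup mathcalGRk} with the classical reduction \eqref{eq: Gkq red} for GP-graphs. Since $R$ has odd characteristic and $(k,|R|)=1$, the second part of Theorem \ref{teo: blowup mathcalGRk} already gives $\mathcal{G}_R(k)\simeq G_{\ff_q}(k)^{(m)}$ when $v_2(k)<v_2(q-1)$, and $\mathcal{G}_R(k)\simeq G_{\ff_q}(\tfrac{k}{2^N})^{(m)}$ when $v_2(k)\ge v_2(q-1)$, with $N=v_2(k)-v_2(q-1)+1$. What remains is to identify these GP-graphs over $\ff_q$ in the normalized form $\G(\cdot,q)$ with exponent dividing $q-1$.

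In the first case, \eqref{eq: Gkq red} gives $G_{\ff_q}(k)=\G(k',q)$ with $k'=(k,q-1)$, so $\mathcal{G}_R(k)\simeq \G(k',q)^{(m)}$ directly.

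In the second case, again by \eqref{eq: Gkq red}, we have $G_{\ff_q}(\tfrac{k}{2^N})=\G\big((\tfrac{k}{2^N},q-1),q\big)$. The task is then to compute this gcd and show it equals $\tfrac{k'}{2}$. Write $k=2^{a}k_0$ and $q-1=2^{b}r_0$ with $k_0,r_0$ odd, so that $a\ge b\ge 1$ ($q$ is odd). Then $v_2(\tfrac{k}{2^N})=a-N=b-1$, and hence
$$ \big(\tfrac{k}{2^N},q-1\big)=2^{\min\{b-1,b\}}(k_0,r_0)=2^{b-1}(k_0,r_0). $$
On the other hand, $k'=(k,q-1)=2^{\min\{a,b\}}(k_0,r_0)=2^{b}(k_0,r_0)$. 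This gives $(\tfrac{k}{2^N},q-1)=\tfrac{k'}{2}$, and therefore $\mathcal{G}_R(k)\simeq \G(\tfrac{k'}{2},q)^{(m)}$.

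As an alternative for the second case, one could apply Theorem \ref{teo: Gk and Gk2}$(c)$ to $\ff_q$ with exponent $k'$: since $G_{\ff_q}(k)=G_{\ff_q}(k')$, the symmetrized graphs also coincide, and this gives $\mathcal{G}_{\ff_q}(k')=G_{\ff_q}(\tfrac{k'}{2})$.

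There is no real obstacle here. The only point needing care is the $2$-adic bookkeeping in the gcd computation, in particular confirming that $b\ge1$ holds because $q$ is odd, which guarantees that $\tfrac{k'}{2}$ is an integer dividing $q-1$.
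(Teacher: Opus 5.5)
Your proof is correct, and its structure is the paper's: both invoke the second part of Theorem~\ref{teo: blowup mathcalGRk}, normalize the GP-graph via \eqref{eq: Gkq red}, and treat the first case identically.

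The two routes differ only in how $\G(\tfrac{k}{2^N},q)=\G(\tfrac{k'}{2},q)$ is established. You compute the gcd directly from $2$-adic valuations, obtaining $(\tfrac{k}{2^N},q-1)=2^{b-1}(k_0,r_0)=\tfrac{k'}{2}$. The paper instead argues by counting. It takes the regularity degree $\tfrac{2m(q-1)}{k'}$ of $\mathcal{G}_R(k)$ obtained at the end of Theorem~\ref{teo: blowup mathcalGRk}. From this it deduces that the connection set of $\G(\tfrac{k}{2^N},q)$ has $\tfrac{2(q-1)}{k'}$ elements, the same as $U_{\frac{k'}{2}}$. It then concludes because the cyclic group $\ff_q^*$ has a unique subgroup of each order.

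Your step is a self-contained arithmetic identity and does not use the regularity computation. The paper's step avoids valuation bookkeeping but needs that extra graph-theoretic input.

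Your alternative route is also valid. It applies Theorem~\ref{teo: Gk and Gk2}$(c)$ to $\ff_q$ with exponent $k'$, which is allowed since $v_2(k')=v_2(q-1)$ gives $-1\notin U_{\ff_q,k'}$. Combined with $\mathcal{G}_R(k)\simeq\mathcal{G}_{\ff_q}(k)^{(m)}=\mathcal{G}_{\ff_q}(k')^{(m)}$ (using $U_{\ff_q,k}=U_{\ff_q,k'}$), it avoids the general exponent $N$ entirely.
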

	
	\begin{proof}
		By Theorem \ref{teo: blowup mathcalGRk}, if $t=v_2(k)-v_2(q-1)+1$ then we have that
		$$ \mathcal{G}_R(k)\simeq 
		\begin{cases}
			{\G(k,q)}^{(m)} 			 & \qquad \text{if $v_{2}(k)< v_{2}(q-1)$}, \\[1mm]
			{\G(\tfrac{k}{2^t},q)}^{(m)} & \qquad \text{if $v_{2}(k)\ge v_{2}(q-1)$}.
		\end{cases} 
		$$
		Since $\G(k,q)=\G(k',q)$ with $k'=(k,q-1)$, by \eqref{eq: Gkq red}, it is enough to show that 
		\begin{equation} \label{eq: Gk2t=Gk'2}
			\G(\tfrac{k}{2^t},q) = \G(\tfrac{k'}{2},q).	
		\end{equation}

		Thus, let $U_\ell=\{x^{\ell}:x\in \mathbb{F}_q^*\}$, then $|U_{\ell}|$ is the regularity degree of $\G(\ell,q)$ and $U_{\ell}$ is the unique subgroup of $\ff_{q}^*$ of size $|U_{\ell}|$. Recall that in general, since $\ff_{q}^*$ is cyclic of order $q-1$, then $|U_{\ell}|=\frac{q-1}{(\ell,q-1)}$.
		
		By Theorem \ref{teo: blowup mathcalGRk}, we have that $\mathcal{G}_R(k)$ is $\frac{2m(q-1)}{k'}$-regular when $v_2(k)\ge v_2(q-1)$, this implies that $\G(\frac{k}{2^{t}},q)$ is $\frac{2(q-1)}{k'}$-regular which implies that $|U_{\frac{k}{2^{t}}}|=\frac{2(q-1)}{k'}$, but the set $U_{\frac{k'}{2}}$ has also the same size which implies \eqref{eq: Gk2t=Gk'2}.
		In this way, we finally obtain \eqref{eq: mathcalGRk = casos GRks}, as desired.
	\end{proof}
	
	By taking into account that the balanced blow-up $G^{(m)}$ of a graph is connected (bipartite) if and only if $G$ is connected (bipartite), we obtain the following result.
	
	\goodbreak 
	
	\begin{coro} \label{coro RFq conn}
		Let $(R,\frak{m})$ be a finite commutative local ring with $m=|\frak{m}|$ and residue field $R/\frak{m} \simeq \ff_q$. 
		Let $k\in \mathbb{N}$ be such that $(k,|R|)=1$. 
		Then, we have:
		\begin{enumerate}[$(a)$]
			\item $\mathcal{G}_{R}(k)$ is connected if and only if $\mathcal{G}_{\ff_q}(k)$ is connected. \sk 
			
			\item $\mathcal{G}_{R}(k)$ is bipartite if and only if $\mathcal{G}_{\ff_q}(k)$ is bipartite.
		\end{enumerate}
	\end{coro}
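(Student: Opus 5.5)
The plan is to reduce everything to the blow-up isomorphism \eqref{eq: blowup symmetrized GRk} of Theorem \ref{teo: blowup mathcalGRk}. That result says $\mathcal{G}_R(k) \simeq \mathcal{G}_{\ff_q}(k)^{(m)}$, with the cosets of $\frak m$ as the independent blocks. Both claims then follow from the general fact, quoted just before the statement, that a balanced blow-up $\G^{(m)}$ is connected (resp.\ bipartite) if and only if $\G$ is. Since that fact is only asserted in passing, I would give a short justification of it. I would use the natural projection $\pi\colon V(\G^{(m)}) \to V(\G)$, sending each vertex of the block $V_x$ to $x$. By the definition of the blow-up (equivalently, $\G\otimes\mathring{K}_m$ by \eqref{eq: blowup}), $u$ and $v$ are adjacent in $\G^{(m)}$ if and only if $\pi(u)$ and $\pi(v)$ are adjacent in $\G$.

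For (a), suppose first that $\mathcal{G}_{\ff_q}(k)$ is connected. Take $u\in V_x$ and $v\in V_y$.
\begin{itemize}
\item If $x\neq y$, a path $x=x_0,x_1,\dots,x_r=y$ in $\G$ lifts to a path $u, u_1,\dots,u_{r-1}, v$ for any choice of $u_i\in V_{x_i}$, since consecutive blocks are completely joined.
\item If $x=y$, we need $\G$ to have at least one edge $xz$; then $u,w,v$ with $w\in V_z$ is a path. This holds because $q\ge 2$ and $T_{\ff_q,k}\ni 1$ is nonempty.
\end{itemize}
Conversely, $\pi$ maps a path in $\mathcal{G}_R(k)$ to a walk in $\mathcal{G}_{\ff_q}(k)$, and $\pi$ is surjective. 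So connectedness of $\mathcal{G}_R(k)$ forces connectedness of $\mathcal{G}_{\ff_q}(k)$.

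For (b), suppose $V(\G)=A\sqcup B$ is a bipartition of $\G=\mathcal{G}_{\ff_q}(k)$. Then $\pi^{-1}(A)\sqcup\pi^{-1}(B)$ is a bipartition of $\G^{(m)}$, because every edge of the blow-up projects to an edge of $\G$. Conversely, if $\G^{(m)}$ is bipartite, then $\G$ embeds as an induced subgraph by choosing one vertex in each block, and an induced subgraph of a bipartite graph is bipartite. Alternatively, an odd cycle in $\G$ lifts to an odd cycle in $\G^{(m)}$.

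The only step needing care is the degenerate situation in (a), where a vertex of $\G$ might have no neighbours. This is ruled out because $1\in T_{\ff_q,k}$ makes the graph regular of positive degree. Everything else is a direct transfer through the isomorphism of Theorem \ref{teo: blowup mathcalGRk}, together with the hypothesis $(k,|R|)=1$ that this theorem requires.
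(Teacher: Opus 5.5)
Your proposal is correct and follows the paper's route. The corollary comes from the blow-up isomorphism $\mathcal{G}_R(k)\simeq\mathcal{G}_{\ff_q}(k)^{(m)}$ of Theorem \ref{teo: blowup mathcalGRk}, together with the fact that a balanced blow-up preserves connectedness and bipartiteness. The only difference is that you verify this fact directly via the projection onto the blocks. The paper instead takes it as known: it cites \cite{Ha} for connectedness and proves the bipartite case spectrally in Lemma \ref{len: blowup bipartite}.
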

	
	However, when $(k,|R|)>1$, the same phenomenon does not occur, as the following example shows. 
	
	\begin{exam}
		Consider the local ring $R=\Z_8$ with maximal ideal $\frak{m}=\{0,2,4,6\}$, hence $\Z_8/\mathfrak{m} = \ff_2$.
		Notice that $\mathcal{G}_{\Z_8/\frak{m}}(k)=K_{2}$ for any $k\in \mathbb{N}$ and, so, $\mathcal{G}_{\Z_8/\frak{m}}(k)^{(4)}\simeq K_{4,4}$ for any $k\in \N$. 
		We have that $\mathcal{G}_{\Z_8}(2)=C_8$ (see Example \ref{exam: not true for Cal GRk}) and hence
		$$ C_8 \simeq \mathcal{G}_{\Z_8}(2) \not \simeq  \mathcal{G}_{\Z_8/\frak{m}}(2)^{(4)} \simeq K_{4,4},$$
		since these graphs have different regularity degrees (2 and 4, respectively).
		\hfill $\diamond$
	\end{exam}

	\section{Product decompositions for $G_{R}$ and reduction} \label{sec: product decompositions for GRk}
	In this and the next section we will extend the results in Theorem \ref{teo: blowup GRk} for the graphs $G_{R}(k)$ and $\mathcal{G}_{R}(k)$ when $R$ is not a local ring. More precisely, in the case of an arbitrary finite commutative ring $R$, by using the Artin decomposition of $R$ we will provide Kronecker product decompositions of these graphs. When $(k,|R|)=1$, we can also express these graphs as blow-ups of the corresponding graph associated to the reduced ring $R_{red}$.

	\subsection{Artin decomposition of $R$ and Kronecker product decomposition of $G_R$}
	We begin by recalling the well-known Artin's structure theorem asserting that for a finite commutative ring with identity $R$ we have the Artin's decomposition 
	\begin{equation} \label{artin desc}
		R \simeq  R_1 \times \cdots \times R_s
	\end{equation} 
	with $s \in \N$, where each $R_i$ is a local ring with unique maximal ideal $\frak{m}_i$. From now on, we set the notations
	\begin{equation} \label{notations R}
		r=|R|, \qquad r_i=|R_i|, \qquad m_i=|\frak{m}_i| \qquad \text{and} \qquad R_i/\frak{m}_i \simeq \ff_{q_i},
	\end{equation}
	for $i=1,\ldots,s$. In this way, $r=r_1 \cdots r_s$ and $q_i=\tfrac{r_i}{m_i}$.
	Moreover, we also have the corresponding decomposition for the group of units:
	\begin{equation} \label{artin desc units}
		R^* \simeq R_1^* \times \cdots \times R_s^*.
	\end{equation}
	Thus, the ring $R$ is local if and only if $s=1$. 
	
	Due to this Artin's decomposition, it will be very convenient for us in the sequel to use the following notation:
	\begin{equation} \label{eq: Is}
		I_s = \{1,2,\ldots, s\}.
	\end{equation}
	
	Recall that the \textit{nilradical} $\mathcal{N}_R$ of $R$ is the intersection of the prime ideals of $R$ (equivalently, the intersection of maximal ideals when $R$ is finite). It can be shown that if $R=R_1\times \cdots \times R_s$ is as above, then 			
	\begin{equation} \label{eq: nilradical}
		\mathcal{N}_R = \frak{m}_1 \times \cdots \times \frak{m_s} \qquad \text{and thus} \qquad 
		|\mathcal{N}_R|=m=m_1\cdots m_s. 
	\end{equation}
	
	We begin by showing a Kronecker product decomposition of the graphs $G_R(k)$ induced by the Artin's decomposition of the ring $R$.
	\begin{prop} \label{prop: Kronecker GRk}
		Let $R$ be a finite commutative ring with identity with Artin's decomposition as in \eqref{artin desc}.
		Then, for any $k\in \mathbb{N}$ we have that 
		\begin{equation} \label{eq: GRk tensor GRki's}
			G_{R}(k) \simeq G_{R_1}(k) \otimes \cdots \otimes G_{R_s}(k). 
		\end{equation}
	\end{prop}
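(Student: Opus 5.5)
The plan is to take the ring isomorphism $\varphi: R \to R_1 \times \cdots \times R_s$ from the Artin decomposition \eqref{artin desc} and check that, viewed as a bijection of vertex sets, it maps arcs of $G_R(k)$ exactly onto arcs of the Kronecker product $G_{R_1}(k) \otimes \cdots \otimes G_{R_s}(k)$. Since $\varphi$ preserves addition, for $v,w\in R$ we have $\varphi(w)-\varphi(v)=\varphi(w-v)$. So it suffices to prove the identity of connection sets
$$ \varphi(U_{R,k}) = U_{R_1,k} \times \cdots \times U_{R_s,k}. $$

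This identity follows from the unit decomposition \eqref{artin desc units} together with the fact that $\varphi$ is multiplicative. Take $x\in R^*$ and write $\varphi(x)=(x_1,\ldots,x_s)$ with each $x_i\in R_i^*$. Then
$$ \varphi(x^k)=(x_1^k,\ldots,x_s^k) \in U_{R_1,k}\times\cdots\times U_{R_s,k}. $$
For the reverse inclusion, take $(y_1^k,\ldots,y_s^k)$ with $y_i\in R_i^*$. Since $\varphi$ restricts to a bijection $R^*\to R_1^*\times\cdots\times R_s^*$, there is a unit $y\in R^*$ with $\varphi(y)=(y_1,\ldots,y_s)$, and then $\varphi(y^k)=(y_1^k,\ldots,y_s^k)$. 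Hence the connection sets correspond componentwise.

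Combining the two steps: $w-v\in U_{R,k}$ if and only if $w_i-v_i\in U_{R_i,k}$ for every $i\in I_s$, where $\varphi(v)=(v_i)$ and $\varphi(w)=(w_i)$. This is precisely the adjacency rule of the Kronecker product, applied to directed arcs (the paper notes that the product extends to digraphs). Therefore $\varphi$ is a digraph isomorphism, which gives \eqref{eq: GRk tensor GRki's}.

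I expect no real obstacle. The only point needing care is that the argument must not assume undirectedness: every adjacency should be treated as an ordered pair $(v,w)$, and that is harmless because the argument works arc by arc. There is also no loop issue, since $0\notin U_{R_i,k}$ for each $i$.
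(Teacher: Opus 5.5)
Your proposal is correct and follows the paper's proof essentially verbatim: both derive $U_{R,k}=U_{R_1,k}\times\cdots\times U_{R_s,k}$ from $R^*\simeq R_1^*\times\cdots\times R_s^*$ and componentwise $k$-th powers, and then read off the Kronecker adjacency coordinatewise. Your explicit use of $\varphi$ and your treatment of arcs as ordered pairs only make precise what the paper leaves implicit.
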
 
	
	\begin{proof}
		The Artin's decomposition we have $R=R_1 \times \cdots \times R_s$ and $R^*= R_{1}^* \times \cdots \times R_{s}^{*}$
		which implies  
		$$ U_{R,k}= U_{R_1,k}\times \cdots \times U_{R_s,k}. $$
		We have that 
		$(a_1,\ldots,a_{s})$ and $(b_1,\ldots, b_s)$ are neighbors in $G_{R}(k)$ if and only if
		$$(b_1,\ldots, b_s)-(a_1,\ldots,a_{s})=(x_1,\ldots,x_s)^k$$ 
		where $x_{i}\in R_{i}^*$, i.e.\@ $b_{i}-a_{i}=x_{i}^k$ for all $i \in I_s$. Hence we get \eqref{eq: GRk tensor GRki's}.
	\end{proof}

	\subsection*{Reduced graphs and reduced rings}
	There are notions of reduced rings and reduced graphs. Fortunately, for the graphs $G_R(k)$ these notions are related in a natural way.
	
	A ring $R$ is \textit{reduced} if it has no non-zero nilpotent elements. 
	We recall that a finite ring $R$ is reduced if and only if it is isomorphic to a product of finite fields 
	$$R=\ff_{q_1} \times \cdots \times \ff_{q_{s}}$$
	(and this is of course its Artin's decomposition).
	The reduced ring of a ring $R$ is 
	$$ R_{red}=R/\mathcal{N}_R, $$ 
	where $\mathcal{N}_R=\{ x \in R: x^n=0 \text{ for some } n \in \N\}$ is the nilradical of $R$.
	
	Given a graph $\G=(V,E)$ , directed or undirected, we have the equivalence relation $\equiv$ on the vertex set $V(\G)$ given by
	$$
	a \equiv b \qquad \Leftrightarrow \qquad N(a)=N(b),
	$$
	where $N(a)=\{v\in V(\G): av \in E(\G)\}$ or 
	$N(a)=\{v\in V(\G): \vec{av} \in E(\G) \text{ or } \vec{va} \in E(\G)\}$, in the undirected or directed case, respectively.
	The \textit{reduced graph} of $\G$, denoted by $\G_{red}$ is the graph with vertex set the equivalence classes $[a]$ under the above equivalence relation such that two classes $[a]$ and $[b]$ are neighbors if and only if $a$ and $b$ are neighbors in $\G$ (with the corresponding orientation in the directed case), that is 
	$$ [a] \sim [b] \quad \text{ in } \G_{red} \qquad \Leftrightarrow \qquad a \sim b \quad \text{ in } \G. $$ 
	In general, the graph $\G$ is reduced if $\G_{red}\simeq \G$, i.e.\@ if $\#[a]=1$ for any $a \in V(\G)$.
	
	We begin with the following lemma stating that generalized Paley graphs $\G(k,q)$ are always reduced.
	
	\begin{lem}\label{lem GP are red}
		Let $q=p^{m}$, with $p$ prime and let $k\in \mathbb{N}$ such that $k\mid q-1$.
		Then, the graph $\G(k,q)$ is reduced.
	\end{lem}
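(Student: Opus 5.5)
The plan is to use that $\G(k,q)$ is a Cayley graph, so its neighbourhoods are translates of a fixed set, and then to show that this set admits no nontrivial additive translation symmetry by a counting argument.

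\emph{Step 1: describe neighbourhoods.} Write $S=R_k=\{x^k: x\in\ff_q^*\}$. This is a multiplicative subgroup of $\ff_q^*$ of order $n=\frac{q-1}{k}$; it is nonempty since $k\mid q-1$. In both the undirected and the directed convention for $N(a)$, we have
$$ N(a) = (a+S)\cup(a-S) = a + H, \qquad H := S\cup(-S). $$
Here $0\notin H$, and $H$ is a subgroup of $\ff_q^*$, being the product of the subgroups $S$ and $\{\pm1\}$. In the undirected case $H=S$.

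\emph{Step 2: reduce to a translation-invariance statement.} Suppose $N(a)=N(b)$ and put $c=b-a$. Translating by $-a$ gives $H=c+H$. It suffices to show that $c+H=H$ forces $c=0$.

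\emph{Step 3: counting argument.} Assume $c\neq 0$ and $c+H=H$. Then $H$ is invariant under translation by every element of the additive subgroup $\ff_p c=\{jc: j\in\ff_p\}$, which has order $p$. Hence $H$ is a disjoint union of cosets $h+\ff_p c$, each of size $p$, so $p\mid |H|$. On the other hand, $H$ is a subgroup of $\ff_q^*$, so $|H|$ divides $q-1$, which is coprime to $p$. Since $H\neq\varnothing$, this is a contradiction. Therefore $c=0$, so each class $[a]$ is a singleton and $\G(k,q)_{red}\simeq\G(k,q)$.

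The only delicate point is Step 1: we must check that the same set $H$ describes $N(a)$ under the paper's directed-graph definition of neighbourhood (in- and out-neighbours together). Once this is settled, the divisibility argument in Step 3 is immediate.
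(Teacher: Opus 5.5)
Your proof is correct and follows essentially the paper's route: reduce (via translation, i.e.\ vertex-transitivity) to showing that the connection set admits no nonzero additive translation, then derive a contradiction from $p$ dividing the order of a subgroup of $\ff_q^*$. There are two minor differences. The paper gets $p\mid |U_k|$ by summing elements, since $\sum_{a\in U_k}a=\sum_{a\in U_k}(a+b)$ gives $b\,|U_k|=0$, whereas you decompose into cosets of $\ff_p c$. The paper also works with $U_k$ alone, while you correctly use $U_k\cup(-U_k)$ to match the paper's directed-graph definition of $N(a)$; the paper passes over this point, though its argument applies verbatim because that set is also a subgroup of $\ff_q^*$.
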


	\begin{proof}
		Since the graph $\G(k,q)$ is vertex-transitive, it is enough to show that $\#[0]=1$. 
		Thus, it is enough to see that if $b \in \ff_q$ is such that $b + U_{k}=U_{k}$, where $U_{k}=\{x^k: x\in \ff_q^*\}$, then $b=0$.
		
		Let $b \in \ff_q$ such that $b + U_k =U_k$, then 
		$$
		\sum_{a\in U_k}a = \sum_{a\in \ff_q}(a+b) = \big( \sum_{a\in \ff_q}a \big) +  b \, |U_k|,
		$$
		which implies that $b\, |U_k|=0$, and so $b =0$ or else $p\mid |U_k|$. However, notice that $p\nmid |U_k|$ since 
		$|U_k|$ divides $q-1$ and $(p,q-1)=1$, hence $b =0$.
		Therefore, we obtain that $\G(k,q)$ is reduced, as asserted. 
	\end{proof}
	
	In particular, in the case that $R$ is reduced, we can give a slightly better decomposition 
	(i.e., one in terms of integers $k_i \le k$ for each $i \in I_s$).
	
	\begin{prop} \label{Nonlocal case red}
		Let $R$ be a finite reduced commutative ring with identity, with Artin's decomposition
		$R \simeq \ff_{q_1}\times \cdots \times \ff_{q_s}$. 
		For any $k\in \mathbb{N}$ we have 
		\begin{equation} \label{eq: GRk Kronecker GRki's}
			G_{R}(k) \simeq \G(k_1,q_1) \otimes \cdots \otimes \G(k_s,q_s)
		\end{equation}
		where $k_i=(k,q_i-1)$ for all $i=1,\ldots,s$.
	\end{prop}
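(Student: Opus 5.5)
The plan is to combine Proposition \ref{prop: Kronecker GRk} with the classical reduction \eqref{eq: Gkq red} for GP-graphs. Since $R \simeq \ff_{q_1}\times \cdots \times \ff_{q_s}$ is the Artin decomposition of $R$, each factor $R_i=\ff_{q_i}$ is a local ring with $\frak m_i=0$. Proposition \ref{prop: Kronecker GRk} then gives, for every $k\in\N$ and with no coprimality hypothesis,
$$ G_R(k) \simeq G_{\ff_{q_1}}(k)\otimes \cdots \otimes G_{\ff_{q_s}}(k). $$
Note that this proposition only uses $R^*=R_1^*\times\cdots\times R_s^*$, so it applies here directly.

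Next I would identify each factor. By definition \eqref{eq: GPkq}, $G_{\ff_{q_i}}(k)=X(\ff_{q_i},\{x^k:x\in\ff_{q_i}^*\})$. Since $\ff_{q_i}^*$ is cyclic of order $q_i-1$, the image of $x\mapsto x^k$ is the unique subgroup of index $(k,q_i-1)$. This image coincides with the image of $x\mapsto x^{k_i}$, where $k_i=(k,q_i-1)$, which is exactly \eqref{eq: Gkq red}. Hence $G_{\ff_{q_i}}(k)=G_{\ff_{q_i}}(k_i)=\G(k_i,q_i)$ with $k_i\mid q_i-1$. One could equally invoke Proposition \ref{prop: reduction GRk GRk'} with $t=0$, as in the remark following it.

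Substituting these identifications into the Kronecker product yields \eqref{eq: GRk Kronecker GRki's}. Since the Kronecker product respects isomorphisms (indeed equalities) of factors, nothing further is needed. There is no real obstacle. The only points to check are that the isomorphism $R\simeq \prod\ff_{q_i}$ transports the connection set componentwise, which is already contained in the proof of Proposition \ref{prop: Kronecker GRk}, and that the case $q_i=2$ (where $k_i=1$ and $\G(1,2)=K_2$) is covered by the same argument.
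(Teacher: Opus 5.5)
Your proposal is correct and follows essentially the same route as the paper: apply Proposition~\ref{prop: Kronecker GRk} to the Artin decomposition $R\simeq\ff_{q_1}\times\cdots\times\ff_{q_s}$, then identify each factor via $G_{\ff_{q_i}}(k)=\G((k,q_i-1),q_i)=\G(k_i,q_i)$. Your explicit justification of that identification (the image of $x\mapsto x^k$ in the cyclic group $\ff_{q_i}^*$ is the unique subgroup of index $(k,q_i-1)$) is exactly the fact the paper invokes without proof.
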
	
	
	\begin{proof}
		From Theorem \ref{prop: Kronecker GRk}, we have that $G_{R}(k) = G_{\ff_{q_1}}(k) \otimes \cdots \otimes G_{\ff_{q_s}}(k)$.  
		Since $G_{\ff_q}(k')=\G((k',q-1),q)$ holds in general, we have that $G_{\ff_{q_i}}(k) = \G(k_i,q_i)$ for each $i \in I_s$ by \eqref{eq: GPkq} and therefore \eqref{eq: GRk Kronecker GRki's} holds.
	\end{proof}
	
	As a direct consequence of Lemma \ref{lem GP are red} and the above proposition, we have the following result.
	
	\begin{coro}\label{coro Gred Rred}
		Let $R$ be a finite reduced commutative ring with identity, with Artin's decomposition
		$R \simeq \ff_{q_1}\times \cdots \times \ff_{q_s}$. 
		For any $k\in \mathbb{N}$, the graph $G_{R}(k)$ is reduced.
	\end{coro}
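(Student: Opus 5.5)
Because $G_R(k)$ is a Cayley graph on $(R,+)$ and therefore vertex-transitive, it is enough to prove $\#[0]=1$. Concretely, we must show that whenever $b\in R$ has the same neighbourhood as $0$, then $b=0$. To do this, I will first show that the neighbourhood relation factors over the Kronecker product given by Proposition \ref{Nonlocal case red}. That proposition gives
$$G_R(k)\simeq \G(k_1,q_1)\otimes\cdots\otimes\G(k_s,q_s),\qquad k_i=(k,q_i-1),$$
and each factor $\G(k_i,q_i)$ is reduced by Lemma \ref{lem GP are red}. The proof then reduces to a general fact: a Kronecker product of reduced Cayley graphs is reduced, provided every factor has nonempty connection set. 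That proviso holds here, since $U_{k_i}\neq\varnothing$ in every factor.

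For the directed case I will read $N(a)$ as the union of out- and in-neighbours. In a Cayley graph $X(G,S)$ this is $a+(S\cup(-S))$. In our setting $U_{R,k}=U_{\ff_{q_1},k}\times\cdots\times U_{\ff_{q_s},k}$, so we need $S\cup -S$ with $S=\prod_i S_i$.

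Take $b=(b_1,\dots,b_s)$ with $N(b)=N(0)$. The key step is to extract the coordinatewise condition $b_i+T_i=T_i$, where $T_i=U_{k_i}\cup(-U_{k_i})$ is the symmetrized connection set in $\ff_{q_i}$.

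\textbf{Undirected or unmixed case.} If we work with out-neighbourhoods only, the argument is clean. The equality $b+\prod S_i=\prod S_i$ means every fibre matches, and since each $S_j$ is nonempty we get $b_i+S_i=S_i$ for every $i$.

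\textbf{Mixed-orientation case (the main obstacle).} The union $\prod S_i\cup\prod(-S_i)$ is not a product set, so projecting coordinates is not immediate. I would handle this in two steps.
\begin{itemize}
\item Project onto coordinate $i$. The projection of $N(0)$ is $S_i\cup(-S_i)=T_i$, and the projection of $N(b)$ is $b_i+T_i$. Hence $b_i+T_i=T_i$.
\item Conclude $b_i=0$ by rerunning the summation trick from Lemma \ref{lem GP are red} with $T_i$ in place of $U_k$. Summing the elements of $b_i+T_i=T_i$ gives $b_i|T_i|=0$.
\end{itemize}
For the last step we need $p_i\nmid|T_i|$. Here $|T_i|$ equals $|U_{k_i}|$ or $2|U_{k_i}|$, so it divides $2(q_i-1)$. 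For odd $p_i$ this is coprime to $p_i$. For $p_i=2$ we have $-1=1$, so $T_i=U_{k_i}$, which divides $q_i-1$ and is odd. In either case $p_i\nmid|T_i|$, so $b_i=0$ for all $i$, hence $b=0$ and $G_R(k)$ is reduced.

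Equivalently, and more simply, the projection argument lets us use Lemma \ref{lem GP are red} directly for the graph $X(\ff_{q_i},T_i)=\mathcal{G}_{\ff_{q_i}}(k)$. That graph is a generalized Paley graph $\G(k_i',q_i)$ for a suitable $k_i'$: $T_i$ is a subgroup of $\ff_{q_i}^*$, being a union of cosets of $U_{k_i}$ by $\{\pm1\}$. By Corollary \ref{coro: Local case mathcal G}, $k_i'$ is $k_i$ or $k_i/2$, and this graph is reduced by Lemma \ref{lem GP are red}.
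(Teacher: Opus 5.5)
Your proof is correct. Its skeleton is the paper's: write $G_R(k)\simeq\bigotimes_i\G(k_i,q_i)$, then force each coordinate to vanish with the summation trick of Lemma~\ref{lem GP are red}.

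The difference lies in the middle step. The paper dismisses it in one line (``by definition of the Kronecker product, the Kronecker product of reduced graphs is reduced''); you actually prove it, and that adds real rigor. The general claim needs your nonempty-connection-set proviso: $(K_2\sqcup K_1)\otimes K_2$ is not reduced, although both factors are. For digraphs the paper takes $N(a)$ to be the union of in- and out-neighbourhoods. A product vertex then has $N(a)=\prod_i N^+(a_i)\cup\prod_i N^-(a_i)$, which in general is not a product set, so nothing holds literally ``by definition'' and one must project, as you do. You also run the summation trick on $T_i=\{\pm1\}U_{k_i}$ rather than on $U_{k_i}$. This checks exactly the condition $b_i+T_i=T_i\Rightarrow b_i=0$ that the union convention requires. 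The proof of Lemma~\ref{lem GP are red} only verifies $b+U_k=U_k\Rightarrow b=0$, which concerns out-neighbourhoods alone.

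Three simplifications are available.
\begin{itemize}
\item Your separate ``undirected or unmixed'' case is superfluous. When all factors are directed the neighbourhood is still not a product set under the paper's convention, and your projection argument covers every case uniformly.
\item $p_i\nmid|T_i|$ is immediate, because $T_i$ is a subgroup of $\ff_{q_i}^*$, whose order $q_i-1$ is prime to $p_i$.
\item Your final remark needs no appeal to Corollary~\ref{coro: Local case mathcal G}. Its hypotheses (odd characteristic and $(k,|R|)=1$) would first force you to replace $k$ by $k_i$ and to treat $p_i=2$ separately. Instead, every subgroup $H$ of the cyclic group $\ff_{q_i}^*$ equals $U_{(q_i-1)/|H|}$, so $X(\ff_{q_i},T_i)$ is a GP-graph directly.
\end{itemize}
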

	
	\begin{proof}
		By definition of the Kronecker product, the Kronecker product of reduced graphs is reduced. By Proposition \ref{Nonlocal case red}, the graph $G_{R}(k)$ is isomorphic to the Kronecker product of $\G(k_i,q_i)$'s and by Lemma \ref{lem GP are red} these graphs are all reduced. Therefore, the graph $G_{R}(k)$ is reduced, as asserted. 
	\end{proof}

	\subsection{Some decompositions for $G_R(k)$ when $(k,|R|)=1$} \label{sec: 3.1}
	As a direct consequence of Theorem~\ref{teo: blowup GRk} and Proposition \ref{prop: Kronecker GRk}, 
	under the assumption $(k,|R|)=1$ we will obtain a better decomposition of $G_{R}(k)$, that is one in terms of Kronecker product of generalized Paley graphs which are defined over finite fields (recall that $G_{\ff_q}(k) = \G(k,q)$ and $K_q = \G(1,q)$).
	Moreover, the graph $G_{R}(k)$ can be obtained as a blow-up of the graph $G_{R_{red}}(k)$ associated to the reduced ring $R_{red}$.

	\begin{thm} \label{teo: GRk tensor GRki's caso no local}
		Let $R$ be a finite commutative ring with identity with Artin's decomposition $R=R_1 \times \cdots \times R_s$
		and let $k\in \mathbb{N}$. 
		If $(k,|R|)=1$, then 
		\begin{equation} \label{eq: GRK tensor GRki for R non local}
			G_{R}(k) \simeq \big( \bigotimes_{i=1}^s \G(k_i,q_i) \big) \otimes \mathring{K}_m \simeq G_{R_{red}}(k) \otimes \mathring{K}_m  \simeq (G_{R_{red}}(k))^{(m)} ,
		\end{equation}
		where $k_i=(k,q_i-1)$ and $m=|\mathcal{N}_R|$. 
	\end{thm}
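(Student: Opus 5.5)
The plan is to chain together Proposition \ref{prop: Kronecker GRk}, Theorem \ref{teo: blowup GRk} applied to each local factor, and Proposition \ref{Nonlocal case red}, using the associativity and commutativity of the Kronecker product.

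First, by Proposition \ref{prop: Kronecker GRk} we have $G_R(k)\simeq G_{R_1}(k)\otimes\cdots\otimes G_{R_s}(k)$. Since $r_i=|R_i|$ divides $|R|$, the hypothesis $(k,|R|)=1$ gives $(k,|R_i|)=1$ for every $i\in I_s$. Hence Theorem \ref{teo: blowup GRk} applies to each local ring $(R_i,\frak m_i)$ and yields
$$G_{R_i}(k)\simeq G_{\ff_{q_i}}(k)\otimes \mathring{K}_{m_i}.$$
Moreover $G_{\ff_{q_i}}(k)=\G(k_i,q_i)$ with $k_i=(k,q_i-1)$, by \eqref{eq: Gkq red}. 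Substituting and regrouping the factors gives
$$G_R(k)\simeq \Big(\bigotimes_{i=1}^s \G(k_i,q_i)\Big)\otimes \Big(\bigotimes_{i=1}^s \mathring{K}_{m_i}\Big).$$

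The only point needing an argument is the identity $\mathring{K}_{a}\otimes \mathring{K}_{b}\simeq \mathring{K}_{ab}$. In $\mathring{K}_a$ every ordered pair of vertices, including equal ones, is adjacent. In the Kronecker product, $(u,v)$ and $(u',v')$ are adjacent if and only if $u\sim u'$ and $v\sim v'$, and this always holds. So the product is the complete graph on $ab$ vertices with a loop at each vertex. By induction, $\bigotimes_i \mathring{K}_{m_i}\simeq \mathring{K}_{m_1\cdots m_s}=\mathring{K}_m$, since $m=|\mathcal N_R|=m_1\cdots m_s$ by \eqref{eq: nilradical}. This proves the first isomorphism.

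For the second isomorphism, note that $R_{red}=R/\mathcal N_R\simeq \prod_i R_i/\frak m_i\simeq \ff_{q_1}\times\cdots\times\ff_{q_s}$, because $\mathcal N_R=\frak m_1\times\cdots\times\frak m_s$. Proposition \ref{Nonlocal case red} then gives $G_{R_{red}}(k)\simeq \bigotimes_i \G(k_i,q_i)$. The last isomorphism is just \eqref{eq: blowup} applied with $\G=G_{R_{red}}(k)$.

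None of these steps is a real obstacle. The mildest points are the coprimality transfer to the local factors, which is needed to invoke Theorem \ref{teo: blowup GRk}, and the identification of $R_{red}$ with the product of the residue fields. Both are immediate.
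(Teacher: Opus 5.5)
Your proposal is correct and follows essentially the same route as the paper: the Kronecker decomposition of Proposition~\ref{prop: Kronecker GRk}, then Theorem~\ref{teo: blowup GRk} on each local factor (after $(k,|R_i|)=1$), then the identity $\mathring{K}_{a}\otimes\mathring{K}_{b}\simeq\mathring{K}_{ab}$ with associativity and commutativity, and finally Proposition~\ref{Nonlocal case red} and \eqref{eq: blowup} for the last two isomorphisms. Your explicit checks of the looped-complete-graph identity and of $R_{red}\simeq\ff_{q_1}\times\cdots\times\ff_{q_s}$ fill in details that the paper only asserts.
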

	
	\begin{proof}
		By Theorem \ref{prop: Kronecker GRk} we have that $G_{R}(k)\simeq  G_{R_1}(k)\otimes \cdots \otimes G_{R_s}(k)$.
		Since $(k,|R|)=1$ and $|R|=|R_1|\cdots|R_s|$ we have that $(k,|R_i|)=1 $ for all $i=1,\ldots,s$.
		By Theorem \ref{teo: blowup GRk}, since each $R_i$ is a local ring, we have that
		$$ G_{R_i}(k)\simeq  G_{\ff_{q_i}}(k) \otimes \mathring{K}_{m_i}$$ 
		for all $i=1,\ldots,s$.
		Now, by taking into account that in general one has 
		$$ \mathring{K}_{n_1}\otimes \mathring{K}_{n_2}=\mathring{K}_{n_{1}n_{2}},$$
		from \eqref{eq: GRk tensor GRki's} and the fact that the $\otimes$ product is associative and commutative, we get
		$$ G_{R}(k) \simeq  \big( \bigotimes_{i=1}^s G_{\ff_{q_i}}(k) \big) \otimes \mathring{K}_m.$$
		As in the proof of the previous corollary, we have that $G_{\ff_{q_i}}(k) = \G(k_i,q_i)$ for each $i \in I_s$.
		So, we have obtained the first equivalence in \eqref{eq: GRK tensor GRki for R non local}. The second equivalence in 
		\eqref{eq: GRK tensor GRki for R non local} is a consequence of Proposition \ref{Nonlocal case red}, since $R_{red}=\ff_{q_1}\times \cdots \times \ff_{q_s}$ in this case. For the third equivalence we use \eqref{eq: blowup}. The last equivalence it follows from the definition of balanced blow-up.
	\end{proof}
	
	Notice that by \eqref{eq: blowup} and the commutative and associative properties of the Kronecker product, the expressions in \eqref{eq: GRK tensor GRki for R non local} can be put in terms of blow-ups as follows
	\begin{equation} \label{eq: blowups for GRk}
		G_{R}(k) \simeq (G_{\ff_{q_i}}(k))^{(m_1)} \otimes \cdots \otimes (G_{\ff_{q_s}}(k))^{(m_s)} \simeq (G_{R_{red}}(k))^{(m)}.
	\end{equation}
	This expression will be used in Theorem \ref{teo: GR bip}.

	\subsection*{Blow-ups and reductions} Notice that if we have a reduced graph $G$, then 
	\begin{equation}\label{eq bup red inv}
		(G^{(m)})_{red}\simeq G.
	\end{equation}
	So, as a consequence of Corollary \ref{coro Gred Rred} and Theorem \ref{teo: GRk tensor GRki's caso no local}, we have the following pleasing result stating that, under the assumption that $k$ is coprime with $|R|$, the reduction of the graph $G_R(k)$ is the $k$-th unitary Cayley graph associated to the reduced ring $R_{red}$.
	
	\begin{thm} \label{thm: GR_red=G_Rred}
		Let $R$ be a finite commutative ring with identity 
		and let $k\in \mathbb{N}$. 
		If $(k,|R|)=1$, then 
		\begin{equation} \label{eq: GR_red=G_Rred}
			\big(G_{R}(k)\big)_{red} \simeq G_{R_{red}}(k).
		\end{equation}	
		
	\end{thm}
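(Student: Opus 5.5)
The proof combines three earlier results. First, Theorem \ref{teo: GRk tensor GRki's caso no local}: since $(k,|R|)=1$, we have
$$ G_R(k)\simeq (G_{R_{red}}(k))^{(m)}, \qquad m=|\mathcal{N}_R|, \qquad R_{red}=\ff_{q_1}\times\cdots\times\ff_{q_s}, $$
where $R=R_1\times\cdots\times R_s$ is the Artin decomposition. Second, Corollary \ref{coro Gred Rred}: the graph $G_{R_{red}}(k)$ is reduced, because $R_{red}$ is a finite reduced ring. Third, identity \eqref{eq bup red inv}: for a reduced graph $G$ one has $(G^{(m)})_{red}\simeq G$. Applying it with $G=G_{R_{red}}(k)$ gives
$$ \big(G_R(k)\big)_{red}\simeq \big((G_{R_{red}}(k))^{(m)}\big)_{red}\simeq G_{R_{red}}(k). $$

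Two points need care. The first is that reduction is an isomorphism invariant: an isomorphism $\G\simeq\G'$ preserves the neighborhood relation $N(a)=N(b)$, so it induces $\G_{red}\simeq\G'_{red}$. This lets us pass from $G_R(k)$ to the blow-up.

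The second, and the main one, is to justify \eqref{eq bup red inv} in the directed setting, since $G_{R_{red}}(k)$ may be directed. In $G^{(m)}$, two vertices in the same part $V_x$ clearly have the same in- and out-neighborhoods. Conversely, suppose $u\in V_x$ and $v\in V_y$ have equal neighborhoods. Their neighborhoods are $\bigcup_{z\in N(x)}V_z$ and $\bigcup_{z\in N(y)}V_z$, so these unions coincide, which forces $N(x)=N(y)$ in $G$. Since $G$ is reduced, $x=y$. Hence the classes of $\equiv$ are exactly the parts $V_x$. Two classes $V_x,V_y$ are adjacent, with orientation, exactly when $x\sim y$ in $G$, so $(G^{(m)})_{red}\simeq G$.

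One subtlety remains. Since the paper's $N(a)$ in the directed case merges in- and out-neighbors, the reducedness of $G_{R_{red}}(k)$ has to be read with that same definition. I would check that Lemma \ref{lem GP are red} covers this: its argument shows that $b+U_k=U_k$ forces $b=0$, and the same sum argument applies to $U_k\cup(-U_k)$. The Kronecker-product step of Corollary \ref{coro Gred Rred} then transfers this reducedness to $G_{R_{red}}(k)$. With that confirmed, the chain of isomorphisms above completes the proof.
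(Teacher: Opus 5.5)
Your proof is correct and follows the paper's route exactly: Theorem~\ref{teo: GRk tensor GRki's caso no local} gives $G_R(k)\simeq (G_{R_{red}}(k))^{(m)}$, Corollary~\ref{coro Gred Rred} gives that $G_{R_{red}}(k)$ is reduced, and \eqref{eq bup red inv} concludes. Your extra checks for the directed case are sound and fill a small hole left by Lemma~\ref{lem GP are red}, whose proof only treats $b+U_k=U_k$. One refinement: with merged neighborhoods, the Kronecker step is not literally ``by definition'', because the neighborhood of $0$ in $G_{R_{red}}(k)$ is $U\cup(-U)$ with $U=\prod_i U_{\ff_{q_i},k}$, which is not a product set; projecting $b+(U\cup -U)=U\cup(-U)$ onto each coordinate gives $b_i+T_{\ff_{q_i},k}=T_{\ff_{q_i},k}$, and your sum argument then forces $b_i=0$.
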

	
	\begin{proof}
		By Theorem \ref{teo: GRk tensor GRki's caso no local}, we have that
		$$
		G_{R}(k)\simeq \big(G_{R_{red}}(k)\big)^{(m)}.
		$$
		Now, by Corollary \ref{coro Gred Rred}, we have that $G_{R_{red}}(k)$ is reduced and by \eqref{eq bup red inv} we obtain that 
		$$
		\big(G_{R}(k)\big)_{red}\simeq \big(G_{R_{red}}(k)\big)_{red}\simeq G_{R_{red}}(k),
		$$ 
		as desired.
	\end{proof}

	\subsubsection*{Prime graphs}
	We now make some comments relating reduced and connected graphs (treated in this work) with prime graphs.
	An \textit{homogeneous set} in a graph $\G$ is a set of vertices $X \subset V(\G)$ such that every vertex in $V(\G) \smallsetminus X$ is adjacent to either all or none of the vertices in $X$. 
	Note that $X=V(\G)$ and $X=\{v\}$ for any $v \in V(\G)$ are  homogeneous sets. An homogeneous set $X$ is \textit{non-trivial} if $2 \le |X| < |V(\G)|$.
	A graph is called \textit{prime} if it has no non-trivial homogeneous sets.
	Notice that by the very definitions, if $\G$ is any graph, we have that:
	\begin{itemize}
		\item $\G$ is prime $\Rightarrow$ $\G$ is reduced. \sk 
		
		\item $\G$ is prime $\Rightarrow$ $\G$ is connected and anticonnected (i.e.\@ connected complement).
	\end{itemize}
	Thus, if $\G$ is not reduced, or not connected or not antinconnected, then $\G$ is not prime.
	
	\begin{rem} \label{rem: primeness}
		Prime Cayley graphs $X(G, S)$ were studied in \cite{Minac}. In particular, \S 4.3 deals with unitary Cayley graphs over a ring $R$, that is $X(R,R^*)= G_R(1)$.  
		In a sequel, Nguyen and Tân (\cite{Nguyen}) studied the primeness of $p$-th unitary Cayley graphs over a commutative rings $G_R(p)$, for a prime power $p$. In fact, they give nice characterizations for the graph $G_R(p)$ to be prime for $R$ a finite commutative ring $R$ and $p$ a prime.
	\end{rem}

	\section{Product decomposition for the graphs $\mathcal{G}_{R}$} \label{sec: products of symmetrized GRks}
	Here we mimic the results obtained in the previous section for the graphs $G_R$, to the symmetrized graphs $\mathcal{G}_{R}$, where $R$ is an arbitrary finite commutative ring with identity.
	
	This undirected case is more technical since for the connection sets of $\mathcal{G}_{R}(k)$ we have that in general, 
	$$ T_{R_1 \times R_2,k} \ne T_{R_1,k} \times T_{R_2,k},$$ 
	as opposed to the previous case where we have that $U_{R_1 \times R_2,k} = U_{R_1,k} \times U_{R_2,k}$.
	
	We first give a natural necessary and sufficient condition for $-1$ to be in $U_{R,k}$.
	\begin{lem} \label{lem: GRk undirected conditions}
		Let $R$ be a finite commutative ring with identity with Artin's decomposition $R=R_1 \times \cdots \times R_s$ 
		and let $k\in \mathbb{N}$. Then, 
		$$ -1 \in U_{R,k} \qquad \Leftrightarrow \qquad -1\in U_{R_j,k} \quad \text{for every $j \in I_s$}. $$
	\end{lem}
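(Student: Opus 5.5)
The argument rests on the coordinatewise description of the connection set already obtained in the proof of Proposition \ref{prop: Kronecker GRk}. Under the Artin decomposition $R=R_1\times\cdots\times R_s$ we have $R^*=R_1^*\times\cdots\times R_s^*$, and $k$-th powers are computed componentwise. Hence
$$ U_{R,k}=U_{R_1,k}\times\cdots\times U_{R_s,k}. $$
The identity $1_R$ corresponds to $(1,\ldots,1)$ under the isomorphism, so $-1_R$ corresponds to $(-1_{R_1},\ldots,-1_{R_s})$.

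For the forward implication, suppose $-1\in U_{R,k}$. Then there is $x=(x_1,\ldots,x_s)\in R^*$ with $x^k=-1$. Comparing components gives $x_j^k=-1_{R_j}$ with $x_j\in R_j^*$, so $-1\in U_{R_j,k}$ for every $j\in I_s$.

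For the converse, suppose that for each $j\in I_s$ there is $x_j\in R_j^*$ with $x_j^k=-1_{R_j}$. Put $x=(x_1,\ldots,x_s)$. This is a unit of $R$ because each component is a unit, and $x^k=(-1,\ldots,-1)=-1_R$. Therefore $-1\in U_{R,k}$.

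There is no real obstacle here. The only point needing care is that the identification $R\simeq\prod R_i$ is a ring isomorphism. This guarantees both that $-1$ maps to the tuple of $-1$'s and that units correspond to tuples of units, which is exactly the content of \eqref{artin desc units}.
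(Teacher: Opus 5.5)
Your proof is correct and follows essentially the same route as the paper. Both use the componentwise identity $U_{R,k}=U_{R_1,k}\times\cdots\times U_{R_s,k}$ together with $-1\leftrightarrow(-1,\ldots,-1)$ for the forward direction, and for the converse both build $a=(a_1,\ldots,a_s)\in R^*$ with $a^k=-1$.
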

	
	\begin{proof}
		Assume first that $-1\in U_{R,k}$.
		The Artin's decomposition \eqref{artin desc} implies that 
		\begin{equation} \label{Srk}
			U_{R,k} = U_{R_1,k}\times \cdots\times U_{R_s,k}.
		\end{equation} 	
		The above equality and the hypothesis $-1\in U_{R,k}$ imply that $-1\in U_{R_i,k}$ for all $i \in I_s$. 
		In fact, $-1\in R$ corresponds to $(-1,\ldots,-1) \in R_1 \times \cdots \times R_s$ in the Artin's decomposition.
		
		Conversely, if $-1\in U_{R_j,k}$ for any $j \in I_s$, then there exists $a_{i}\in R_i^{*}$ such that $a_{i}^{k}=-1$ in $R_{i}$ for all $i\in I_s$. 
		Using the Artin's decomposition for the units in \eqref{artin desc units}
		we have that $a=(a_1,\ldots,a_{s})\in R^*$ and 
		$$ a^{k} = (a_1^k,\ldots,a_{s}^k) = (-1,\ldots,-1)=-1. $$
		Hence, we obtain that $-1\in U_{R,k}$, as asserted. 
	\end{proof}
	
	In the same way as was done by Liu and Zhou in \cite{LZ2}, we will give a necessary and sufficient condition for undirected unitary Cayley graphs of $k$-th powers to have a Kronecker product decomposition. 
	We will need the following base case first.
	
	\begin{lem} \label{lem: GR1xR2 = GR1 x GR2}
		If $R_1$ and $R_2$ are finite commutative rings, then 
		\begin{equation} \label{eq:  GR1k x GR2k}
			\mathcal{G}_{R_1\times R_2}(k) = \mathcal{G}_{R_1}(k) \otimes \mathcal{G}_{R_2}(k) \quad  
			\Leftrightarrow \quad -1 \in U_{R_j,k}  \text{ for at least one $j\in \{1,2\}$}.
		\end{equation}
	\end{lem}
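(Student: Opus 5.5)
Both graphs have vertex set $R_1\times R_2$ and are Cayley graphs, so the equality amounts to comparing connection sets. By \eqref{Srk} and the definition of $T$, the connection set of $\mathcal{G}_{R_1\times R_2}(k)$ is
$$ T_{R_1\times R_2,k} = (U_{1}\times U_{2}) \cup \big((-U_{1})\times(-U_{2})\big), $$
with $U_j=U_{R_j,k}$. The Kronecker product $\mathcal{G}_{R_1}(k)\otimes\mathcal{G}_{R_2}(k)$ is the Cayley graph with connection set
$$ T_{R_1,k}\times T_{R_2,k} = (U_1\cup -U_1)\times(U_2\cup -U_2), $$
since $(a_1,a_2)\sim(b_1,b_2)$ there iff $b_i-a_i\in T_{R_i,k}$ for $i=1,2$. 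The plan is therefore to prove
$$ (U_1\times U_2)\cup(-U_1\times -U_2) = (U_1\cup -U_1)\times(U_2\cup -U_2) \quad\Leftrightarrow\quad -1\in U_1 \text{ or } -1\in U_2 . $$
The inclusion $\subseteq$ always holds, so the content lies in the mixed terms $U_1\times(-U_2)$ and $(-U_1)\times U_2$.

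For ($\Leftarrow$), suppose $-1\in U_1$. Since $U_1$ is a multiplicative subgroup of $R_1^*$, we get $-U_1=U_1$. The right-hand side then equals $U_1\times(U_2\cup -U_2)=(U_1\times U_2)\cup(U_1\times -U_2)$. The left-hand side equals $(U_1\times U_2)\cup(-U_1\times -U_2)=(U_1\times U_2)\cup(U_1\times -U_2)$, so the two agree. The case $-1\in U_2$ is symmetric.

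For ($\Rightarrow$), I argue by contraposition. Assume $-1\notin U_1$ and $-1\notin U_2$, and take $(1,-1)$, which lies in $T_{R_1,k}\times T_{R_2,k}$. If $(1,-1)\in U_1\times U_2$, then $-1\in U_2$, a contradiction. If $(1,-1)\in -U_1\times -U_2$, then $-1\in U_1$, again a contradiction. Hence $(1,-1)$ is a connection element of the product but not of $\mathcal{G}_{R_1\times R_2}(k)$. The edge $\{0,(1,-1)\}$ therefore belongs to one graph and not the other, so they differ.

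The only delicate point is interpreting ``$=$'' in the statement as equality of graphs on the common vertex set $R_1\times R_2$, rather than abstract isomorphism. Under isomorphism the ($\Rightarrow$) direction would need an extra argument, for instance an edge count: in that case $|T|$ would be strictly smaller than $|T_{R_1,k}||T_{R_2,k}|$. When $-1\notin U_j$, any $x\in U_j\cap(-U_j)$ gives $-1=x\cdot(-x)^{-1}\in U_j$, so $U_j\cap -U_j=\varnothing$ and $|T_{R_j,k}|=2|U_j|$. This gives $|T|=2|U_1||U_2|$ versus $4|U_1||U_2|$, which rules out isomorphism via regularity as well.
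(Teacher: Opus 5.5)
Your proof is correct and follows essentially the same route as the paper. You reduce to equality of the connection sets $T_{R_1\times R_2,k}$ and $T_{R_1,k}\times T_{R_2,k}$, use $(1,-1)$ as the witness for ($\Rightarrow$), and for ($\Leftarrow$) absorb the sign into the factor where $-1$ is a $k$-th power; you do this via $-U_1=U_1$, the paper via the explicit identity with $c^{t_1-t_2}$. Your closing regularity count ($2|U_1||U_2|$ versus $4|U_1||U_2|$) is a useful extra: it also rules out abstract isomorphism, which is the form used later in Theorem~\ref{thm: conditions for mathcalGRk Kronecker}.
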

	
	\begin{proof}
		Notice that on the one hand, we obtain 
		$$ 
		T_{R_1,k} \times T_{R_2,k} = \{(\pm a^k, \pm b^k),(\pm a^k, \mp b^k): a\in R_1^{*}, b\in R_2^{*} \},
		$$
		while on the other hand, since $T_{R_1\times R_2,k} = (U_{R_1,k} \times U_{R_2,k}) \cup - (U_{R_1,k} \times U_{R_{2},k})$, we get that  
		$$ 
		T_{R_1 \times R_2,k} = \{\pm (a^k,b^k): a\in R_{1}^{*}, b\in R_{2}^*\} \subseteq T_{R_1,k} \times T_{R_2,k}. 
		$$
		We have that $\mathcal{G}_{R_1 \times R_2,k} = X(R_1 \times R_2, T_{R_1 \times R_2,k})$ and, from the definitions of $\mathcal{G}_{R_1}(k)$, $\mathcal{G}_{R_2}(k)$ and $\mathcal{G}_{R_1}(k)\otimes\mathcal{G}_{R_2}(k)$, it follows that 
		\begin{gather*}
			\mathcal{G}_{R_1}(k) \otimes \mathcal{G}_{R_2}(k) = X(R_1\times R_2, T_{R_1,k} \times   T_{R_2,k}). 
		\end{gather*}
		Therefore, 
		$$ \mathcal{G}_{R}(k) = \mathcal{G}_{R_1}(k) \otimes \mathcal{G}_{R_2}(k) \qquad \Leftrightarrow \qquad T_{R_1\times R_2,k} = T_{R_1,k} \times T_{R_2,k}. $$
		
		Now, if $-1\not\in U_{R_1,k}$ and $-1\not\in U_{R_2,k}$, then $(1,-1), (-1,1) \in T_{R_1,k} \times T_{R_2,k}$, but they are not elements of $T_{R_1 \times R_2,k}$, and hence $\mathcal{G}_{R_1\times R_2}(k)\neq \mathcal{G}_{R_1}(k)\otimes \mathcal{G}_{R_2}(k)$. Therefore, the equality of graphs $\mathcal{G}_{R_1\times R_2}(k) = \mathcal{G}_{R_1}(k)\otimes \mathcal{G}_{R_2}(k)$ implies that $-1 \in U_{R_1,k}$ or $-1 \in U_{R_2,k}$.
		
		To see the converse of \eqref{eq:  GR1k x GR2k}, suppose that $-1$ belongs to $U_{R_i,k}$ for at least one $i=1,2$. 
		Without loss of generality, we may suppose that $-1\in U_{R_1,k}$, and hence $-1=c^{k}$ for some $c\in R_{1}^{*}$.
		Then, for any
		$(a,b)\in (R_1\times R_2)^*$ and $t_1,t_2 \in \{0,1\}$ 
		we have
		$$ \big( (-1)^{t_1}a^k, (-1)^{t_2} b^{k} \big)= (-1)^{t_2} \big( (-1)^{t_1-t_2} a^k,b^k\big) = (-1)^{t_2} \big( (c^{t_1-t_2} a)^k,b^k\big).$$
		Hence $T_{R_1,k} \times T_{R_2,k} \subseteq T_{R_1\times R_2,k}$. In this way, 
		$T_{R_1,k} \times T_{R_2,k} = T_{R_1 \times R_2,k}$ 
		and hence $\mathcal{G}_{R_1\times R_2}(k) = \mathcal{G}_{R_1}(k) \otimes \mathcal{G}_{R_2}(k)$, as desired. 
	\end{proof}
	
	We are now ready to state and prove the general result.
	The proof is essentially the same as in the case $k=2$ of Theorem 2.5 of \cite{LZ2}, but we give it for the ease of the reader and completeness.
	\begin{thm} \label{thm: conditions for mathcalGRk Kronecker}
		Let $R$ be a finite commutative ring with identity with Artin's decomposition $R=R_1 \times \cdots \times R_s$. 
		If $k\in \mathbb{N}$, then 
		\begin{equation} \label{eq: GRK tensor no-local R}
			\mathcal{G}_{R}(k) \simeq \bigotimes_{i=1}^s \mathcal{G}_{R_i}(k) \qquad \Leftrightarrow \qquad -1\not \in U_{R_j,k} \quad \text{for at most one $j\in I_s$}.
		\end{equation}
	\end{thm}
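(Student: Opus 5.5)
The plan is to reduce everything to an equality of connection sets, as in the proof of Lemma \ref{lem: GR1xR2 = GR1 x GR2}. Using $R^*=R_1^*\times\cdots\times R_s^*$, we have $U_{R,k}=U_{R_1,k}\times\cdots\times U_{R_s,k}$, so
$$T_{R,k}=\{\pm(a_1^k,\ldots,a_s^k): a_i\in R_i^*\}\subseteq T_{R_1,k}\times\cdots\times T_{R_s,k}.$$
Moreover $\bigotimes_i\mathcal{G}_{R_i}(k)=X(R,T_{R_1,k}\times\cdots\times T_{R_s,k})$ directly from the definition of the Kronecker product. Hence the (natural) equality of graphs holds if and only if the two connection sets coincide. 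I will read the ``$\simeq$'' in the statement as this natural identification, exactly as in the lemma. Reversing the reverse inclusion will give non-isomorphism only via a count, addressed below.

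For ($\Leftarrow$), suppose $-1\in U_{R_i,k}$ for all $i\neq j$. Take any element $(\varepsilon_1a_1^k,\ldots,\varepsilon_s a_s^k)$ of the product, with $\varepsilon_i=\pm1$. Factor out $\varepsilon_j$ to write it as
$$\varepsilon_j\big((\varepsilon_j\varepsilon_1)a_1^k,\ldots,a_j^k,\ldots,(\varepsilon_j\varepsilon_s)a_s^k\big).$$
For each $i\neq j$, the sign $\varepsilon_j\varepsilon_i$ equals $c_i^k$ with $c_i\in R_i^*$, because $-1\in U_{R_i,k}$. So the $i$-th entry is $(c_ia_i)^k$ and the element lies in $T_{R,k}$. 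This gives equality of the sets. Alternatively, I would induct on $s$ using Lemma \ref{lem: GR1xR2 = GR1 x GR2} with $R_1\times\cdots\times R_{s-1}$ and $R_s$: order the factors so that the possible bad index $j$ is last. Lemma \ref{lem: GRk undirected conditions} then gives $-1\in U_{R_1\times\cdots\times R_{s-1},k}$, so the lemma applies, and the induction hypothesis handles the first $s-1$ factors.

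For ($\Rightarrow$), suppose two indices $i\neq j$ satisfy $-1\notin U_{R_i,k}$ and $-1\notin U_{R_j,k}$. Consider the element $x$ with $1$ in coordinate $i$, $-1$ in coordinate $j$, and $1$ elsewhere. It lies in $\prod T_{R_l,k}$. If $x=\pm(a_l^k)_l$, then either $a_j^k=-1$ or $a_i^k=-1$, a contradiction, so $x\notin T_{R,k}$. Thus the connection sets differ.

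The main obstacle is upgrading this to non-isomorphism. Here I would compare regularity degrees, which requires $|T_{R,k}|<\prod_l|T_{R_l,k}|$ and not merely a difference of sets. Note $|T_{R,k}|\le 2|U_{R,k}|=2\prod_l|U_{R_l,k}|$. For each $l\in\{i,j\}$, $-U_{R_l,k}\neq U_{R_l,k}$ because $-1\notin U_{R_l,k}$ and $U_{R_l,k}$ is a subgroup; being cosets of that subgroup they are disjoint, so $|T_{R_l,k}|=2|U_{R_l,k}|$. Therefore $\prod_l|T_{R_l,k}|\ge 4\prod_l|U_{R_l,k}|>|T_{R,k}|$. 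The two graphs then have different degrees and are not isomorphic, which finishes the equivalence.
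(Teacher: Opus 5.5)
Your proof is correct, and it takes a more direct route than the paper's.

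\textbf{The paper's route.} The paper argues by induction on $s$, with Lemma~\ref{lem: GR1xR2 = GR1 x GR2} as the base case.
\begin{itemize}
\item For ($\Leftarrow$) it peels off one factor at a time. This is exactly your ``alternative'' argument.
\item For ($\Rightarrow$) it reorders so that the bad indices are $1,\ldots,t$ and splits off the good factors via the lemma. It then compares the two decompositions to cancel down to $\mathcal{G}_{R_1\times\cdots\times R_t}(k)=\mathcal{G}_{R_1}(k)\otimes\cdots\otimes\mathcal{G}_{R_t}(k)$, and uses the induction hypothesis to force $t=s+1$. Only then does it exhibit $(-1,1,\ldots,1)\in\prod_i T_{R_i,k}\setminus T_{R,k}$.
\end{itemize}

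\textbf{What your route does differently.} You avoid induction entirely.
\begin{itemize}
\item The sign-factoring trick gives $\prod_i T_{R_i,k}\subseteq T_{R,k}$ in one step.
\item For the converse, your witness needs only two bad coordinates, so there is no reduction to the all-bad case.
\end{itemize}

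\textbf{What your degree count buys.} More importantly, your count
\[
|T_{R,k}|\le 2|U_{R,k}|<4|U_{R,k}|\le\prod_l|T_{R_l,k}|
\]
proves that the two graphs are genuinely non-isomorphic. It rests on the fact that, when $-1\notin U_{R_l,k}$, the sets $U_{R_l,k}$ and $-U_{R_l,k}$ are distinct, hence disjoint, cosets of the subgroup $U_{R_l,k}\le R_l^*$.

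The paper's argument proves less. Its cancellation step and its final conclusion $\mathcal{G}_R(k)\ne X(R,\prod_i T_{R_i,k})$ only show that the natural Cayley structures on $R$ differ. Cancellation of a common Kronecker factor is valid for equality of connection sets but not for abstract isomorphism. So the paper's proof really establishes the equality version, not the $\simeq$ written in the statement.

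Your count closes that gap; it generalizes the regularity-degree comparison the paper uses only afterwards, in the $\mathbb{Z}_p\times\mathbb{Z}_p$ example. It also makes your explicit witness $x$ redundant for the isomorphism version. The witness is needed only if one reads the statement as equality of connection sets.
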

	
	\begin{proof}
		We assume that $s\ge 3$, since the result is trivial for $s = 1$ and the case $s=2$ is Lemma \ref{lem: GR1xR2 = GR1 x GR2} with $R_1, R_2$ local rings. 
		
		Thus, assume that $s \ge 3$ and suppose that the result holds for any $\ell \le s$. 
		By induction, it is enough to prove the result for $s+1$. So, fix 
		$$ R= R_1 \times \cdots \times R_s \times R_{s+1}$$ 
		the Artin's decomposition of $R$. 
		
		\goodbreak 
		
		Assume that there is at most one index $j \in I_{s+1}$ such that $-1\not\in U_{R_j,k}$. 
		In this way, if $-1 \not \in U_{R_{s+1},k}$ then $-1 \in U_{R_j,k}$ for all $j \in I_s$, and this implies that $-1\in U_{R_{1,k} \times \cdots \times R_{s},k}$, by Lemma~\ref{lem: GRk undirected conditions}. 
		On the other hand, if $-1\not \in U_{R_{j},k}$ for some $j \in I_s$ then $-1\in U_{R_{s+1},k}$.	
		Hence, we have seen that either 
		$-1\in U_{R_1\times \cdots \times R_{s},k}$ or else $-1\in U_{R_{s+1},k}$ and, thus, by \eqref{eq:  GR1k x GR2k} we get 
		$$\mathcal{G}_{R_1\times \cdots \times R_{s+1}}(k)= \mathcal{G}_{R_1\times \cdots \times R_{s}}(k)\otimes \mathcal{G}_{R_{s+1}}(k).$$
		By the inductive hypothesis we have that $\mathcal{G}_{R_1\times \cdots \times R_{s}}(k)= \mathcal{G}_{R_1}(k) \otimes\cdots \otimes\mathcal{G}_{R_{s}}(k)$ and, therefore,
		$$\mathcal{G}_{R_1\times \cdots \times R_{s+1}}(k)= \mathcal{G}_{R_1}(k)\otimes \cdots \otimes \mathcal{G}_{R_{s+1}}(k),$$
		as we wanted to show.
		
		Conversely, assume that $\mathcal{G}_{R}(k)= \mathcal{G}_{R_1}(k)\otimes \cdots \otimes \mathcal{G}_{R_{s+1}}(k)$. We need to prove that there is at most one index $j \in I_{s+1}$ such that $-1\in U_{R_j,k}$. Suppose
		the contrary happens. Without loss of generality we may assume that, for some integer $t$ with $2\leq t \leq s+1$, we have $-1\not \in U_{R_j,k}$ for $1\le j\le t$ and $-1\in U_{R_j,k}$ for $t<j\le s+1$. 
		If $t<s+1$, then since $-1\in U_{R_{s+1},k}$, by \eqref{eq:  GR1k x GR2k} we have that 
		$$ \mathcal{G}_{R}(k) = \mathcal{G}_{R_1\times \cdots \times R_{s}}(k) \otimes \mathcal{G}_{R_{s+1}}(k).$$ 
		Similarly, if $t<s$ then $\mathcal{G}_{R_1\times \cdots \times R_{s}}(k)=\mathcal{G}_{R_1\times \cdots \times R_{s-1}}(k) \otimes \mathcal{G}_{R_s}(k)$ and, inductively, we obtain that
		$$\mathcal{G}_{R}(k)=\mathcal{G}_{R_1\times \cdots \times R_{t}}(k) \otimes\mathcal{G}_{R_{t+1}}(k)\otimes \cdots \otimes \mathcal{G}_{R_{s+1}}(k).$$
		Comparing this with the assumption $\mathcal{G}_{R}(k) = \mathcal{G}_{R_1}(k)\otimes \cdots \otimes \mathcal{G}_{R_{s+1}}(k)$ we obtain that $\mathcal{G}_{R_1\times \cdots \times R_{t}}(k)=\mathcal{G}_{R_1}(k)\otimes \cdots \otimes \mathcal{G}_{R_{t}}(k)$. Since $2\le t\le s$, by the inductive hypothesis we have that there is at most one $j$ between $1$ and $t$ such that $-1\not\in U_{R_j,k}$, but this contradicts the definition of $t$, since the assumption $t\ge 2$ implies that $t=s+1$, that is $-1\not\in U_{R_j,k}$ for $1\le j\le s+1$. 
		Since 
		$$T_{R,k} = \big(U_{R_1,k} \times\cdots\times U_{R_{s+1},k}\big) \cup -\big(U_{R_1,k} \times\cdots\times U_{R_{s+1},k} \big),$$ 
		it follows that $(-1,1,\ldots,1)\not\in T_{R,k}$. On the other hand, we have however that, 
		$(-1,1,\ldots,1) \in T_{R_1,k} \times \cdots \times T_{R_{s+1},k}$ and, consequently, 
		$$ \mathcal{G}_{R}(k) \ne X(R, T_{R_1,k} \times \cdots \times T_{R_{s+1},k}) = \mathcal{G}_{R_1}(k)\otimes \cdots\otimes \mathcal{G}_{R_{s+1}}(k). $$
		This contradiction shows that there is at most one $j \in I_{s+1}$ such that $-1\not \in U_{R_{j},k}$.
	\end{proof}
	
	We now illustrate the theorem with cyclic groups of even order.
	
	\begin{exam}
		Let $R=\mathbb{Z}_{2p^2}$ and let $k=p(p-1)$ with $p$ an odd prime. By the Euler-Fermat Theorem 
		$$ a^{p(p-1)} \equiv 1 \pmod{2p^2}$$ 
		and so 
		$$ G_{R}(p(p-1))=\vec{C}_{2p^{2}} \qquad \text{and} \qquad \mathcal{G}_{R}(p(p-1))=C_{2p^{2}}. $$ 
		It is well-known that if $n$ is odd then $C_{2n}\simeq K_{2}\otimes C_{n}$, thus in our case we get 
		$$C_{2p^{2}}\simeq K_{2}\otimes C_{p^{2}}.$$
		
		On the other hand, recall that $\Z_{2p^{2}}\simeq \Z_2 \times \Z_{p^{2}}$, in this case notice that $-1 \not\in U_{\Z_p^{2},p(p-1)}$, and $-1=1\in U_{\Z_2,p(p-1)}$. 
		Therefore, by Theorem \ref{thm: conditions for mathcalGRk Kronecker}, we obtain that 
		$$\mathcal{G}_{R}(p(p-1))\simeq \mathcal{G}_{\Z_2}(p(p-1))\otimes \mathcal{G}_{\Z_p}(p(p-1)).$$ 
		It can be checked that, $\mathcal{G}_{\Z_2}(p(p-1))=K_2$ and $\mathcal{G}_{\Z_p}(p(p-1))=C_{p^{2}}$.
		\hfill $\diamond$
	\end{exam}

	\subsection*{Reduced rings}
	When $R$ is a reduced finite ring, i.e.\@ when $R \simeq \ff_{q_1} \times \cdots \times \ff_{q_{s}}$, as a particular case of the previous result, we have that if $k\in \N$, then 
	\begin{equation} \label{eq: GRk reduced R}
		\mathcal{G}_{R}(k) \simeq  \bigotimes_{i=1}^s \mathcal{G}_{\ff_{q_i}}(k) \qquad  \Leftrightarrow \qquad 
		-1\not \in U_{\ff_{q_j},k} \quad \text{for at most one $j \in I_s$}
	\end{equation}
	(equivalently, $q_j$ is odd and $k_j \nmid \frac{q_j-1}{2}$).
	
	We next give an example to show that \eqref{eq: GRk reduced R} can fail when $-1\not \in U_{R_i,k}$ for more than one index.
	\begin{exam} 
		Let $R=\mathbb{Z}_p \times \mathbb{Z}_p$ with $p$ an odd prime and let $k=p-1$. In this case, the local components of $R$ are $R_1=R_2=\mathbb{Z}_p$. 
		Recall that the regularity degrees of $\mathcal{G}_R(k)$ and $\mathcal{G}_{R_1}(k)\otimes \mathcal{G}_{R_2}(k)$ are $|T_{R,k}|$ and $|T_{R_1,k}\times T_{R_2,k}|$, respectively.
		
		Now, notice that $-1\not \in U_{R_i,k}$ for $i=1,2$. Indeed, the unique non-zero $(p-1)$-th power of $\mathbb{Z}_p$ is $1$ due to the little Fermat's theorem. Thus, $U_{R_i,k}=\{1\}$ for $i=1,2$ which implies that $T_{R_i,k}=\{1,-1\}$ and hence 
		$$
		T_{R_1,k} \times T_{R_2,k}=\{(1,-1)(-1,1),(1,1),(-1,-1)\}. 
		$$
		On the other hand, for any $u \in R^*$, we have that  $u^{p-1}=(1,1)$, which implies that $U_{R,k}=\{(1,1)\}$ and hence
		$$
		T_{R,k} = \{(1,1),(-1,-1)\}.
		$$
		Therefore, we obtain that 
		$$
		\mathcal{G}_{R}(k) \not \simeq \mathcal{G}_{R_1}(k) 	\otimes \mathcal{G}_{R_2}(k),
		$$
		since they have different regularity degrees.
		\hfill $\diamond$
	\end{exam}

	We now give a general decomposition of $\mathcal{G}_{R}(k)$ into in terms of the symmetrized graph $\mathcal{G}_{R_{red}}(k)$ associated to the reduced ring $R_{red}$ of $R$.
	In the notations of \eqref{artin desc} and \eqref{notations R}, we have the following.
	\begin{thm} \label{thm: product decomposition for mathcal GRk}
		Let $R$ be a finite commutative ring with identity with Artin's decomposition $R \simeq R_1 \times \cdots \times R_s$ and  
		let $k\in \mathbb{N}$ be such that $(k,|R|)=1$. Then, we have that   
		\begin{equation} \label{reduced}
			\mathcal{G}_{R}(k) \simeq \big( \bigotimes_{i=1}^s \mathcal{G}_{\ff_{q_i}}(k) \big) \otimes \mathring{K}_m \simeq \mathcal{G}_{R_{red}}(k) \otimes \mathring{K}_m \simeq (\mathcal{G}_{R_{red}}(k))^{(m)},
		\end{equation}	
		where $R_{red} = \ff_{q_1} \times \cdots \times \ff_{q_s}$ and $m=|\frak{m}_1| \cdots |\frak{m}_s|$,
		if and only if there is at most one index $j \in I_s$ such that $-1 \notin U_{R_j,k}$.
	\end{thm}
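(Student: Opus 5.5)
The plan is to reduce the statement to Theorem~\ref{thm: conditions for mathcalGRk Kronecker} combined with the local blow-up result, Theorem~\ref{teo: blowup mathcalGRk}, and then to handle the converse through the reduced ring.

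\emph{Sufficiency.} Assume at most one index $j\in I_s$ has $-1\notin U_{R_j,k}$. Then Theorem~\ref{thm: conditions for mathcalGRk Kronecker} gives $\mathcal{G}_R(k)\simeq \bigotimes_i \mathcal{G}_{R_i}(k)$. Since $(k,|R_i|)=1$ for every $i$, Theorem~\ref{teo: blowup mathcalGRk} together with \eqref{eq: blowup} gives $\mathcal{G}_{R_i}(k)\simeq \mathcal{G}_{\ff_{q_i}}(k)\otimes \mathring{K}_{m_i}$. Using commutativity and associativity of $\otimes$ and the identity $\mathring{K}_{m_1}\otimes\cdots\otimes\mathring{K}_{m_s}=\mathring{K}_m$, we get the first isomorphism in \eqref{reduced}.

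For the second isomorphism, note that by Corollary~\ref{-1RFq}(a) (or Lemma~\ref{lem: condition for -1 in URk, R local}(a)) we have $-1\in U_{R_j,k}$ if and only if $-1\in U_{\ff_{q_j},k}$. Hence the hypothesis transfers to the reduced ring $R_{red}=\ff_{q_1}\times\cdots\times\ff_{q_s}$, and \eqref{eq: GRk reduced R} gives $\bigotimes_i\mathcal{G}_{\ff_{q_i}}(k)\simeq \mathcal{G}_{R_{red}}(k)$. The last isomorphism is \eqref{eq: blowup}.

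\emph{Necessity.} Suppose two indices $i\neq j$ satisfy $-1\notin U_{R_i,k}$ and $-1\notin U_{R_j,k}$. We must show that none of the three graphs in \eqref{reduced} is isomorphic to $\mathcal{G}_R(k)$. The three graphs are isomorphic to each other as long as $\bigotimes_i \mathcal{G}_{\ff_{q_i}}(k)\simeq\mathcal{G}_{R_{red}}(k)$. If even this fails, I would read the statement as claiming $\mathcal{G}_R(k)\simeq(\mathcal{G}_{R_{red}}(k))^{(m)}$ and compare regularity degrees, which I expect to be cleanest.

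The degree of $\mathcal{G}_R(k)$ is $|T_{R,k}|$. When $-1\notin U_{R,k}$, Lemma~\ref{lem: condition for -1 in URk, R local}(c) adapted to the product (if $x^k=-y^k$ then $-1=(xy^{-1})^k$) gives $U_{R,k}\cap -U_{R,k}=\varnothing$, so $|T_{R,k}|=2|U_{R,k}|$. Theorem~\ref{teo: GRk tensor GRki's caso no local} gives $|U_{R,k}|=m|U_{R_{red},k}|$, so $|T_{R,k}|=2m\,|U_{R_{red},k}|=m\,|T_{R_{red},k}|$, and this is exactly the degree of $(\mathcal{G}_{R_{red}}(k))^{(m)}$. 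So the degree comparison does not separate $\mathcal{G}_R(k)$ from the last graph in \eqref{reduced}. What does fail is the middle step: by \eqref{eq: GRk reduced R} (via the degree argument of the $\Z_p\times\Z_p$ example), $\bigotimes_i\mathcal{G}_{\ff_{q_i}}(k)\not\simeq\mathcal{G}_{R_{red}}(k)$. The degree of $\bigotimes_i\mathcal{G}_{\ff_{q_i}}(k)\otimes\mathring{K}_m$ is $m\prod_i|T_{\ff_{q_i},k}|$, and each factor with $-1\notin U$ contributes a factor $2$. With at least two such indices this gives at least $4m\prod|U|$, versus $2m\prod|U|$ for $\mathcal{G}_R(k)$. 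So the first graph in \eqref{reduced} differs from $\mathcal{G}_R(k)$ in degree, and the chain of isomorphisms breaks.

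\textbf{Main obstacle.} The key difficulty is interpreting ``if and only if'' correctly. The converse direction can only mean that the chain \eqref{reduced} fails as a whole, and concretely that the first isomorphism fails. The regularity count above, which uses the disjointness $U_{R,k}\cap(-U_{R,k})=\varnothing$ and $|T_{\ff_{q_i},k}|=2|U_{\ff_{q_i},k}|$ for the bad indices, establishes this. I would present the proof in that form.
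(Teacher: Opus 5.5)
Your proposal is correct. The sufficiency half follows the paper's argument exactly: Theorem~\ref{thm: conditions for mathcalGRk Kronecker}, the local blow-up for each $R_i$, Corollary~\ref{-1RFq}(a) to transfer the hypothesis to $R_{red}$, and then \eqref{eq: GRk reduced R} and \eqref{eq: blowup}. You also cite the correct local result, Theorem~\ref{teo: blowup mathcalGRk}, where the paper's proof cites Theorem~\ref{teo: blowup GRk}.

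The necessity half takes a slightly different route. The paper inherits the converse from Theorem~\ref{thm: conditions for mathcalGRk Kronecker}, using that $\bigotimes_i\mathcal{G}_{R_i}(k)\simeq\big(\bigotimes_i\mathcal{G}_{\ff_{q_i}}(k)\big)\otimes\mathring{K}_m$ holds unconditionally when $(k,|R|)=1$. The proof of that converse exhibits $(-1,1,\ldots,1)\in\prod_i T_{R_i,k}\setminus T_{R,k}$. Literally, this only shows that the two Cayley graphs on $R$ are unequal; non-isomorphism then needs a count of the kind you carry out. Your direct regularity count, $2m\prod_i|U_{\ff_{q_i},k}|$ against at least $4m\prod_i|U_{\ff_{q_i},k}|$, is self-contained and proves genuine non-isomorphism. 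It uses $U\cap(-U)=\varnothing$, and Corollary~\ref{-1RFq}(a) for the two bad indices.

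Your reading of the ``only if'' is also right, and the degree coincidence you found is no accident. By Lemma~\ref{lem: m-cosets}(a) applied in each $R_i$, $U_{R,k}$ is the full preimage of $U_{R_{red},k}$ under $R\to R_{red}$. Hence $T_{R,k}$ is the full preimage of $T_{R_{red},k}$, so $\mathcal{G}_R(k)\simeq(\mathcal{G}_{R_{red}}(k))^{(m)}$ holds with no condition on $-1$. So the only link of \eqref{reduced} that depends on the hypothesis is $\bigotimes_i\mathcal{G}_{\ff_{q_i}}(k)\simeq\mathcal{G}_{R_{red}}(k)$. You were right to abandon your initial aim of showing that all three graphs differ from $\mathcal{G}_R(k)$.
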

	
	\begin{proof}
		By 
		Theorem \ref{thm: conditions for mathcalGRk Kronecker}, we have that 
		$\mathcal{G}_{R}(k) \simeq \mathcal{G}_{R_1}(k) \otimes \cdots \otimes \mathcal{G}_{R_s}(k)$ if and only if there is at most one 
		$j \in I_s$ such that $-1 \notin U_{R_j,k}$. 
		Now, by Theorem \ref{teo: blowup GRk}, since the $R_i$'s are local rings, by using the associativity and commutativity of the 
		$\otimes$ product and \eqref{eq: GRk reduced R} we get
		\begin{align*}
			\mathcal{G}_{R}(k) 
			\simeq  \bigotimes_{i=1}^s \big( \mathcal{G}_{\ff_{q_i}}(k) \otimes \mathring{K}_{m_i}\big)  
			\simeq  \big( \bigotimes_{i=1}^s \mathcal{G}_{\ff_{q_i}}(k) \big) \otimes \big( \bigotimes_{i=1}^s \mathring{K}_{m_i} \big) 
			\simeq  \big( \bigotimes_{i=1}^s \mathcal{G}_{\ff_{q_i}}(k) \big) \otimes \mathring{K}_m.
		\end{align*}
		Now, since $(k,|R|)=1$ is equivalent to $(k,|R_i|)=1$ for all $i\in I_s$, by \eqref{eq: GRk reduced R} we have that 
		$$ \mathcal{G}_{\ff_{q_1}}(k) \otimes \cdots \otimes \mathcal{G}_{\ff_{q_s}}(k) \simeq \mathcal{G}_{\ff_{q_1} \times \cdots \times \ff_{q_s}} (k)= \mathcal{G}_{R_{red}}(k), $$ 
		since by Corollary \ref{-1RFq} we have that $-1\in U_{R_j,k}$ if and only if $-1\in U_{\ff_{q_j},k}$.
	\end{proof}
	
	We now illustrate the theorem for a small ring.
	
	\begin{exam}
		Let $R=\Z_{4}\times \Z_4$ and let $k=3$. Notice that the nilradical of $R$ is $\mathcal{N}_{R} = \Z_2 \times \Z_2$, hence has size $4$, and that
		$$ T_{R,3}=U_{R,3}=\{(1,1),(1,-1),(-1,1), (-1,-1)\}, $$ 
		which implies that any element of $R$ has exactly $4$ neighbors. 
		
		A direct computation allows us to conclude that each of the sets 
		\begin{align*}
			&V_1=\{(0,0),(2,2),(0,2),(2,0)\}, &  & \quad V_2=\{(1,1),(1,-1),(-1,1), (-1,-1)\}, \\  
			&V_3=\{(1,2),(-1,0),(-1,2),(1,0)\}, & & \quad  V_4=\{(2,-1),(0,1),(2,1),(0,-1)\},
		\end{align*}
		are independent subsets of vertices. Moreover, if $N_S$ denotes the set of neighbours of a subset $S$ of the vertices $V(G)$ in a graph $G$, that is 
		$$
		N_S = \{w \in V(G) \, : \, vw \in E(G) \:\:  \,\text{for any $v\in S$}\}. 
		$$ 
		Then, we have  
		$$ N_{V_1}=V_2, \qquad N_{V_2}=V_1, \qquad N_{V_3}=V_4, \qquad N_{V_4}=V_3.$$
		Thus, $\mathcal{G}_{R}(3)$ is the disjoint union of two copies of the complete bipartite graph $K_{4,4}$.
		
		On the other hand, notice that since $(k,|R|)=1$ and $R_{red}=\Z_2\times \Z_2$, by Theorem \ref{thm: product decomposition for mathcal GRk} we obtain that 		
		$$ 
		\mathcal{G}_{\Z_2\times \Z_2}(3)\simeq \mathcal{G}_{\Z_2}(3) \otimes \mathcal{G}_{\Z_2}(3).
		$$
		By taking into account that $\mathcal{G}_{\Z_2}(3)= K_2$ and $K_{2}\otimes K_2$ is the disjoint union of two copies of $K_2$, we have   
		$$
		(\mathcal{G}_{R_{red}}(3))^{(4)}\simeq (K_2 \sqcup K_2)^{(4)}=K_2^{(4)}\sqcup K_2^{(4)}.
		$$
		Finally, it is well known that $K_2^{(n)}=K_{n,n}$ which implies that $K_2^{(4)}$ is the complete bipartite graph $K_{4,4}$. Therefore, we obtain that $(\mathcal{G}_{R_{red}}(3))^{(4)}$ is the disjoint union of two copies of the complete bipartite graph $K_{4,4}$ and so \eqref{reduced} holds.  
		\hfill $\diamond$
	\end{exam}

	In terms of blow-ups, by \eqref{eq: blowup} and the Kronecker product properties, the expressions in the theorem are equivalent to
	\begin{equation} \label{eq: mathcalGR blowups}
		\mathcal{G}_{R}(k) \simeq (\mathcal{G}_{\ff_{q_i}}(k))^{(m_1)} \otimes \cdots \otimes (\mathcal{G}_{\ff_{q_s}}(k))^{(m_s)} \simeq (\mathcal{G}_{R_{red}}(k))^{(m)}.
	\end{equation}
	
	\begin{rem}
		Note that Theorems \ref{teo: GRk tensor GRki's caso no local} and \ref{thm: product decomposition for mathcal GRk} are generalizations in the non-local case of Theorem \ref{teo: blowup GRk} for a local ring $R$. 
	\end{rem}
	
	As a direct consequence, we have the following corollary for rings of odd order.
	
	\begin{coro} \label{coro: general case}
		Let $R$ be a finite commutative ring with identity of odd order with Artin's decomposition $R \simeq R_1 \times \cdots \times R_s$, and let $k\in \mathbb{N}$ be such that $(k,|R|)=1$. If there is at most one index $j \in I_s$ such that $-1 \notin U_{R_j,k}$ then   
		\begin{equation} \label{reduced ki}
			\mathcal{G}_{R}(k) \simeq \big( \bigotimes_{i=1}^s \G(\tilde{k}_i,q_i) \big) ^{(m)} \simeq \big( \bigotimes_{i=1}^s \G(\tilde{k}_i,q_i) \big) \otimes \mathring{K}_m,
		\end{equation}	
		where $m=|\mathcal{N}_R|$ and for any $i=1,\ldots,s$, we have 
		$$
		\tilde{k}_i=\begin{cases}
			k_i 			& \qquad \text{if $v_2(k)< v_2(q-1)$}, \\[1mm]
			\tfrac{k_i}{2}  & \qquad \text{if $v_2(k)\ge v_2(q-1)$},
		\end{cases}
		$$
		with $k_i=(k,q_i-1)$ and $q_i$ is the size of the residue field $R_i/\frak{m}_i$.
	\end{coro}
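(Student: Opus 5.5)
The plan is to combine Theorem \ref{thm: product decomposition for mathcal GRk} with the local computation of Corollary \ref{coro: Local case mathcal G}, applied factor by factor. Since $|R|$ is odd, every $q_i$ is odd, so each residue field $\ff_{q_i}$, and each $R_i$, has odd characteristic. Also $(k,|R|)=1$ gives $(k,|R_i|)=1$ for every $i\in I_s$. The hypothesis that $-1\notin U_{R_j,k}$ for at most one $j$ is exactly the condition of Theorem \ref{thm: product decomposition for mathcal GRk}. So the first step is to invoke that theorem:
$$\mathcal{G}_R(k)\simeq \Big(\bigotimes_{i=1}^s \mathcal{G}_{\ff_{q_i}}(k)\Big)\otimes \mathring{K}_m,\qquad m=m_1\cdots m_s=|\mathcal{N}_R|,$$
the last equality by \eqref{eq: nilradical}.

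The second step identifies each factor $\mathcal{G}_{\ff_{q_i}}(k)$ as a GP-graph. I will apply Corollary \ref{coro: Local case mathcal G} to the local ring $\ff_{q_i}$ itself, which is a field with $\frak m=0$, so the blow-up order is $1$. The hypothesis $(k,q_i)=1$ holds, and the corollary gives $\mathcal{G}_{\ff_{q_i}}(k)\simeq \G(k_i,q_i)$ if $v_2(k)<v_2(q_i-1)$ and $\mathcal{G}_{\ff_{q_i}}(k)\simeq\G(\tfrac{k_i}{2},q_i)$ if $v_2(k)\ge v_2(q_i-1)$. In both cases this is $\G(\tilde k_i,q_i)$. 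Here I read the $q$ in the statement's definition of $\tilde k_i$ as $q_i$. One check is needed in the second case: $k_i/2$ must be an integer. This holds because $v_2(k)\ge v_2(q_i-1)\ge 1$, so $v_2(k_i)=v_2(q_i-1)\ge1$.

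Finally I substitute into the product and use \eqref{eq: blowup} together with associativity and commutativity of $\otimes$. This gives $\big(\bigotimes_i \G(\tilde k_i,q_i)\big)\otimes\mathring K_m\simeq\big(\bigotimes_i\G(\tilde k_i,q_i)\big)^{(m)}$, which is \eqref{reduced ki}. The only delicate point is the correct per-index reading of the $2$-adic case split, and the applicability of Corollary \ref{coro: Local case mathcal G} to a field with $m=1$. Both are routine, so I expect no real obstacle.
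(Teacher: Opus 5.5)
Your proposal is correct and is essentially the paper's proof, which simply cites Theorem \ref{thm: product decomposition for mathcal GRk} together with Corollary \ref{coro: Local case mathcal G}. Your additional checks (each $q_i$ is odd, $(k,q_i)=1$, $k_i$ is even in the case $v_2(k)\ge v_2(q_i-1)$, and $q$ in the case split is read as $q_i$) make that one-line argument explicit.
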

	
	
	\begin{proof}
		It follows directly from Corollary \ref{coro: Local case mathcal G} and Theorem \ref{thm: product decomposition for mathcal GRk}. 
	\end{proof}
	
	Relative to the relation between reduced graphs and reduced rings, we have an analogous result to Theorem \ref{thm: GR_red=G_Rred} for the symmetrized graphs $\mathcal{G}_R$.
	
	\begin{thm} \label{thm: symmetrized GR_red=G_Rred}
		Let $R$ be a finite commutative ring with identity and let $k\in \mathbb{N}$ such that $(k,|R|)=1$. 
		If there is at most one index $j \in I_s$ such that $-1 \notin U_{R_j,k}$, then 
		\begin{equation} \label{eq: sym GR_red=G_Rred}
			\big(\mathcal{G}_{R}(k)\big)_{red} \simeq \mathcal{G}_{R_{red}}(k).
		\end{equation}
	\end{thm}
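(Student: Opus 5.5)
The proof will mirror that of Theorem \ref{thm: GR_red=G_Rred}, with Theorem \ref{thm: product decomposition for mathcal GRk} in place of Theorem \ref{teo: GRk tensor GRki's caso no local}. The argument has two ingredients: a blow-up decomposition of $\mathcal{G}_R(k)$, and the fact that $\mathcal{G}_{R_{red}}(k)$ is a reduced graph. Identity \eqref{eq bup red inv} then gives the claim.

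First, since $(k,|R|)=1$ and at most one index $j\in I_s$ has $-1\notin U_{R_j,k}$, Theorem \ref{thm: product decomposition for mathcal GRk} gives
$$\mathcal{G}_R(k)\simeq (\mathcal{G}_{R_{red}}(k))^{(m)}, \qquad m=|\mathcal{N}_R|.$$
So it suffices to show that $\mathcal{G}_{R_{red}}(k)$ is reduced.

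Second, by Corollary \ref{-1RFq} the condition on $-1$ transfers to the residue fields: $-1\in U_{R_j,k}$ if and only if $-1\in U_{\ff_{q_j},k}$. Hence, by \eqref{eq: GRk reduced R},
$$\mathcal{G}_{R_{red}}(k)\simeq \bigotimes_{i=1}^s \mathcal{G}_{\ff_{q_i}}(k).$$
As noted in the proof of Corollary \ref{coro Gred Rred}, a Kronecker product of reduced graphs is reduced. It therefore remains to prove that each $\mathcal{G}_{\ff_q}(k)$ is reduced.

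For this, I would reuse the argument of Lemma \ref{lem GP are red} directly, rather than passing through the identification with some $\G(k',q)$. The graph $\mathcal{G}_{\ff_q}(k)=X(\ff_q,T)$ is a Cayley graph, hence vertex-transitive, so it suffices to show $\#[0]=1$. Suppose $N(b)=N(0)$, i.e.\ $b+T=T$. Summing over $T$ gives $|T|\,b=0$. The set $T=U\cup(-U)$ is a subgroup of $\ff_q^*$: it equals $U$ if $-1\in U$, and otherwise it is the subgroup $U\cdot\{\pm1\}$. Thus $|T|$ divides $q-1$ and is prime to $p$, which forces $b=0$. (In odd characteristic one could alternatively quote Theorem \ref{teo: Gk and Gk2} to write $\mathcal{G}_{\ff_q}(k)=G_{\ff_q}(l)$ and apply Lemma \ref{lem GP are red}, but the direct argument also covers $q$ even.)

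Combining these steps with \eqref{eq bup red inv} yields $(\mathcal{G}_R(k))_{red}\simeq((\mathcal{G}_{R_{red}}(k))^{(m)})_{red}\simeq \mathcal{G}_{R_{red}}(k)$.

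The main point needing care is the claim that Kronecker products of reduced graphs are reduced, which the paper asserts without proof. I would verify it for this case directly: neighborhoods in the product are products of neighborhoods, $N(a)=\prod_i N_i(a_i)$. When every factor has nonempty neighborhoods, as here since $T\neq\varnothing$, the equality $N(a)=N(b)$ forces $N_i(a_i)=N_i(b_i)$ for every $i$, and hence $a=b$.
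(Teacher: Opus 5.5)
Your proof is correct and follows the paper's route: the paper simply says to repeat the argument of Theorem \ref{thm: GR_red=G_Rred}, using the blow-up $\mathcal{G}_R(k)\simeq(\mathcal{G}_{R_{red}}(k))^{(m)}$ from Theorem \ref{thm: product decomposition for mathcal GRk} together with \eqref{eq bup red inv}. You additionally supply two details the paper leaves implicit: each $\mathcal{G}_{\ff_q}(k)$ is reduced because $T=U\cdot\{\pm1\}$ is a subgroup of $\ff_q^*$ of order prime to $p$, and the Kronecker product of these factors is reduced because all neighborhoods are nonempty. That nonemptiness hypothesis is genuinely needed, since the paper's unqualified claim that Kronecker products of reduced graphs are reduced fails, e.g., for $K_1\otimes K_2$.
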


	\begin{proof}
		The proof is similar to the one of Theorem \ref{thm: GR_red=G_Rred}, using Theorem \ref{thm: product decomposition for mathcal GRk}.
	\end{proof}

	\goodbreak

	\section{Bipartiteness} \label{sec: bipartiteness}
	In this and the next section, we study two basic structural properties (bipartiteness and connectedness) of the graphs $G_R$ and $\mathcal{G}_R$ for an arbitrary finite commutative ring $R$, by way of Kronecker product decompositions given in Sections~{\ref{sec: product decompositions for GRk} and \ref{sec: products of symmetrized GRks}}. 
	To study the structural properties of the graphs $G_R$ we will have to distinguish whether the graphs are directed or not (the directed case being more involved).
	
	We begin by studying the bipartiteness of the graphs, so we recall first the definitions and basic properties.

	A simple graph $\G$ is \textit{bipartite} if there exist disjoint subsets $V_1,V_2$ of the set of vertices $V=V(\G)$ of $\G$ such that 
	$$ V=V_1\cup V_2 $$ 
	such that if $a,b\in V_{i}$ then $a,b$ are not neighbors in $\G$  (the sets $V_1, V_2$ are called the \textit{bipartition} of $V$).
	
	\subsubsection*{Undirected bipartite graphs} 
	We recall that an undirected  graph $\G$ is bipartite if some (and hence all) of the following equivalent conditions are satisfied:
	\begin{enumerate}[(C1).]
		\item The number $-\rho(\G)\in Spec(\G)$, where $\rho(\G)$ is the spectral radius of $\G$. \sk
		
		\item The spectrum of $\G$ is symmetric (i.e.\@ $\lambda \in Spec(\G)$ if and only $-\lambda \in Spec(\G)$). \sk
		
		\item $\G$ has only even cycles. \sk
		
		\item The chromatic number of $\G$ is 2, that is $\chi(\G)=2$. 
	\end{enumerate}
	
	Above, $Spec(\G)$ denotes the spectrum of $\G$, that is the set of all the eigenvalues $\lambda_1, \ldots, \lambda_n$ of $\G$ ($n=|V|$) counted with multiplicities. If $\G$ is undirected, the eigenvalues are real and are usually listed 
	\begin{equation} \label{eq: eigenvalues}
		\lambda_1 \ge \cdots \ge \lambda_n,
	\end{equation}
	with $\lambda_1$ referred to as the principal eigenvalue.
	The \textit{spectral radius} of $\G$ is 
	$$ 
	\rho(\G) = \max \{ |\lambda|: \lambda \in Spec(\G)\} =\lambda_1,
	$$ 
	with the convention \eqref{eq: eigenvalues}.
	It is well-known that 
	$ \rho(\G) =\Delta(\G):=\max \{\delta(v): v\in V(\G)\} $
	where $\delta(v)$ is the degree of $v$.
	When $\G$ is $m$-regular, 
	$$\rho(\G)=m$$ 
	and hence (C1) can be changed by ``$-m \in Spec(\G)$''. 
	
	\subsubsection*{Directed bipartite graphs}
	The notion of bipartiteness can be extended to directed graphs as follows. A directed graph $D$ is called bipartite if there exist a bipartition 
	$V_1, V_2$ of $V(D)$ such that  
	every arc of $D$ connects a vertex of $V_1$ to a vertex in $V_2$ or vice versa. In this case, it can be shown that the condition on $D$ to be bipartite is equivalent to  (C1) and (C2) of the above list (see Theorem 3.1 in \cite{Br}). Moreover the condition (C3)  can be changed by 
	\begin{enumerate}
		\item[(C3').] $D$ has only even directed cycles.
	\end{enumerate}

	\subsection*{Bipartiteness of blow-ups and Kronecker products of graphs}
	We will need the following facts about the bipartiteness of a blow-up and a Kronecker product of graphs in terms of the bipartiteness of the smaller graphs.
	
	We begin with the bipartiteness of a blow-up.
	\begin{lem} \label{len: blowup bipartite}
		Let $\G$ be any graph and $m\in \N$. The balanced blow-up $\G^{(m)}$ is bipartite if and only if $\G$ is bipartite.
	\end{lem}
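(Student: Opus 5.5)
The plan is to argue directly from the definition of bipartiteness, using the natural projection $\pi: V(\G^{(m)}) \to V(\G)$ that sends every vertex of the independent set $V_x$ to $x$. By construction of the balanced blow-up (equivalently, by \eqref{eq: blowup}, via $\G^{(m)} \simeq \G \otimes \mathring{K}_m$), two vertices $u\in V_x$ and $w\in V_y$ are adjacent in $\G^{(m)}$ (with the same orientation in the directed case) if and only if $x$ and $y$ are adjacent in $\G$. So $\pi$ is a graph homomorphism, and every edge of $\G$ lifts to edges of $\G^{(m)}$.

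For the implication ``$\G$ bipartite $\Rightarrow$ $\G^{(m)}$ bipartite'', take a bipartition $V_1 \cup V_2$ of $V(\G)$ and pull it back: set $W_i = \pi^{-1}(V_i) = \bigcup_{x\in V_i} V_x$ for $i=1,2$. These sets are disjoint and cover $V(\G^{(m)})$. If $u,w \in W_i$ were adjacent, then $\pi(u),\pi(w)\in V_i$ would be adjacent in $\G$, which is impossible. In the directed case the same argument shows that every arc joins $W_1$ to $W_2$ or $W_2$ to $W_1$.

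For the converse, I will use a section of $\pi$: fix one vertex $x^* \in V_x$ for each $x$. The induced subgraph of $\G^{(m)}$ on $\{x^* : x \in V(\G)\}$ is isomorphic to $\G$, since $x^*$ and $y^*$ are adjacent exactly when $x$ and $y$ are. An induced subgraph of a bipartite graph is bipartite, because we can restrict the bipartition, and this gives the result.

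There is no serious obstacle here. The only point needing care is that $\G$ might have loops, for instance when the lemma is applied to pseudographs through $\otimes \mathring{K}_m$. If $\G$ has a loop at $x$, then under the usual blow-up convention $V_x$ is not independent; but in both directions a loop prevents bipartiteness, so the equivalence still holds. I will remark on this briefly, and note that for the simple graphs considered here (the graphs $G_R(k)$ and $\mathcal{G}_R(k)$, which are loopless since $0\notin U_{R,k}$) the issue does not arise.
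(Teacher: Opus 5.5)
Your proof is correct, and it takes a different route from the paper.

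\textbf{The paper's route.} It argues spectrally. From $\G^{(m)}\simeq\G\otimes\mathring{K}_m$ and the fact that $\mathring{K}_m$ has eigenvalues $m$ and $0$, it gets that the nonzero eigenvalues of $\G^{(m)}$ are the numbers $m\lambda$ with $\lambda\in Spec(\G)$. Hence $\rho(\G^{(m)})=m\rho(\G)$, and the paper then transfers criterion (C1), ``$-\rho\in Spec$'', in both directions.

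\textbf{Your route.} You argue combinatorially. The projection $\pi$ is a homomorphism, so a bipartition of $\G$ pulls back to $\G^{(m)}$. A transversal $\{x^*\}$ induces a copy of $\G$, so a bipartition of $\G^{(m)}$ restricts to $\G$.

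\textbf{What your approach buys.} It applies verbatim to disconnected graphs, digraphs and graphs with loops. By contrast, (C1) characterizes bipartiteness only for connected graphs: for instance $K_{3,3}\sqcup K_3$ has $-\rho=-3$ in its spectrum but is not bipartite. For digraphs, spectral tests need strong connectivity: the transitive tournament on three vertices has spectrum $\{0,0,0\}$, so both (C1) and (C2) hold, yet it is not bipartite. This is not a mere technicality. In Theorem~\ref{teo: GR bip} the lemma is applied to $G_{R_{red}}(k)$, which can be directed or disconnected; for example, a factor may be $\G(2^{t}-1,2^{t})\simeq K_2\sqcup\cdots\sqcup K_2$.

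\textbf{What the paper's approach buys.} It fits the spectral-radius machinery the paper reuses in the Kronecker-product bipartiteness and connectedness lemmas. In the undirected case it could be made fully general by invoking (C2) instead of (C1), since scaling the nonzero eigenvalues by $m$ preserves the symmetry of the spectrum.
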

	
	\begin{proof}
		By \eqref{eq: blowup} we have $\G^{(m)} \simeq \G \otimes \mathring{K}_m$. 
		Since the eigenvalues of the Kronecker product are the product of the eigenvalues of the factors and the eigenvalues of $\mathring{K}_m$ are $0$ and $m$, 
		we have that 
		$$ \rho(\G^{(m)}) = m\rho(\G). $$ 
		Moreover, the non-zero eigenvalues of $\G^{(m)}$ are given by $\{m\lambda\}$, where $\lambda$ is any eigenvalue of $\G$.
		Thus, if $\G$ is bipartite then $-\rho(\G)$ is an eigenvalue of $\G$ and hence $-m\rho(\G)$ is an eigenvalue of $\G^{(m)}$, which implies that $\G^{(m)}$ is bipartite. 
		
		Conversely, if $\G^{(m)}$ is bipartite then $-\rho(\G^{(m)})=-m\rho(\G)$ is an eigenvalue of $\G^{(m)}$ and hence $-\rho(\G)$ is an eigenvalue of $\G$. Therefore, the graph $\G$ is bipartite.
	\end{proof}

	Next, we recall that a Kronecker product of graphs is bipartite if and only if at least one of the factors is bipartite.
	This is a well-known result of Weichsel from 1962 (see \cite{Wei}). We give an alternative proof for completeness, using the spectral radius (Weichsel used odd cycles).
	
	\begin{lem} \label{lem: Kronecker bipartite criterion}
		The graph $\G=\G_1 \otimes \cdots \otimes \G_s$ is bipartite if and only if the graph $\G_i$ is bipartite for at least one 
		$i \in I_s$. 
	\end{lem}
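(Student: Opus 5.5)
I will follow the spectral approach announced just before the statement, using criterion (C1): a graph with spectral radius $\rho$ is bipartite if and only if $-\rho$ is an eigenvalue. By associativity of $\otimes$ and an induction on $s$, it suffices to treat $s=2$, i.e.\@ to show that $\G_1\otimes\G_2$ is bipartite if and only if $\G_1$ or $\G_2$ is bipartite. The inductive step is then immediate: $\G_1\otimes\cdots\otimes\G_{s}=(\G_1\otimes\cdots\otimes\G_{s-1})\otimes\G_s$, and the first factor is bipartite if and only if some $\G_i$ with $i\le s-1$ is.

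For $s=2$, the adjacency matrix of $\G_1\otimes\G_2$ is $A_1\otimes A_2$. Its spectrum is therefore $\{\lambda\mu:\lambda\in Spec(\G_1),\,\mu\in Spec(\G_2)\}$ with multiplicities, so $\rho(\G_1\otimes\G_2)=\rho(\G_1)\rho(\G_2)$. If, say, $\G_1$ is bipartite, then $-\rho(\G_1)\in Spec(\G_1)$ and $\rho(\G_2)\in Spec(\G_2)$ by Perron--Frobenius. Hence $-\rho(\G_1)\rho(\G_2)$ is an eigenvalue of the product, which is bipartite by (C1).

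For the converse, suppose $-\rho_1\rho_2=\lambda\mu$ for some eigenvalues $\lambda,\mu$. Since $|\lambda|\le\rho_1$ and $|\mu|\le\rho_2$, we need $|\lambda|=\rho_1$, $|\mu|=\rho_2$ and $\lambda\mu<0$. Thus one of $\lambda,\mu$ equals minus the corresponding spectral radius, and that factor is bipartite by (C1). This argument assumes every factor has at least one edge. If $\rho_i=0$, the product is edgeless and hence trivially bipartite, and the factor with $\rho_i=0$ is also edgeless and bipartite, so both sides of the equivalence hold.

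The main obstacle is whether (C1) can be applied as stated. For connected undirected graphs it is standard. For disconnected graphs it holds componentwise, but "$-\rho$ is an eigenvalue" only certifies that the components attaining the maximal spectral radius are bipartite, not the whole graph. So I would first apply the spectral argument to each pair of connected components $C_1\subseteq\G_1$, $C_2\subseteq\G_2$, noting that $C_1\otimes C_2$ is a union of components of the product. A graph is bipartite if and only if all its components are, and $C_1\otimes C_2$ is bipartite if and only if $C_1$ or $C_2$ is.

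That component-level criterion does not by itself give the global statement, since a product could pair a non-bipartite component of $\G_1$ with a bipartite component of $\G_2$ and vice versa. I therefore expect to fall back on Weichsel's odd-cycle argument for full generality, which handles the directed case via (C3$'$) as well. The argument runs as follows. If $\G_1$ and $\G_2$ contain odd closed walks of lengths $a$ and $b$, repeat them $b$ and $a$ times respectively to get odd closed walks of common length $ab$; these combine into an odd closed walk in the product, so it is not bipartite. Conversely, a proper $2$-colouring of one factor, pulled back along the projection, colours the product properly.
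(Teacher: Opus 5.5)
Your final argument is correct, but for the nontrivial direction it is not the paper's route. For the easy direction, the paper does exactly what you eventually do: it pulls back a bipartition of one factor along the projection. For the converse, the paper argues spectrally. From $-\rho(\G)\in Spec(\G)$ and $\rho(\G)=\rho(\G_1)\cdots\rho(\G_s)$ it extracts an $i$ with $-\rho(\G_i)\in Spec(\G_i)$, and then reads off bipartiteness of $\G_i$ via (C1). You replace this with Weichsel's odd-closed-walk argument.

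The spectral route is shorter and matches the tools used later for connectedness. However, as you rightly noticed, its final step uses (C1) in the direction ``$-\rho\in Spec\Rightarrow$ bipartite'', which fails for disconnected factors (e.g.\ $C_4\sqcup K_3$). Your combinatorial argument needs no connectivity and works unchanged for loops and for mixed directed/undirected factors, so it is the more robust proof. Your first, spectral attempt at the easy direction has the same defect: a product of connected graphs need not be connected (e.g.\ $K_2\otimes K_2$). Dropping it in favour of the pullback is therefore right.

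Two side remarks. First, your worry that the component-level criterion does not give the global statement is unfounded. If $C_i$ is a non-bipartite component of $\G_i$ for $i=1,2$, then $C_1\otimes C_2$ is a union of components of $\G_1\otimes\G_2$. So a single such pair already makes the product non-bipartite, and this is how the paper's spectral proof can be repaired.

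Second, in the directed case your appeal to (C3$'$) is only as good as that criterion. With the paper's definition of a bipartite digraph, (C3$'$) holds when the weak components are strongly connected, as for the Cayley digraphs used later. It does not hold in general: the transitive tournament on three vertices has no directed cycle yet is not bipartite. The same example also defeats the paper's (C1).
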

	
	\begin{proof}
		Assume first that one of the graphs $\G_i$ is bipartite. 
		Without loss of generality, we can assume that $\G_1$ is bipartite. 
		Thus, there exist disjoint subsets $V_1,\, V_2$ of $V(\G_1)$, such that $V(\G_1)= V_1 \cup V_2$, which are a bipartition of $\G$.
		On the other hand, since $V(\G)=V(\G_1)\times \cdots \times V(\G_s)$, we have
		$$ 
		V(\G)= \big(V_1\times V(\G_2)\times \cdots \times V(\G_s) \big) \cup \big(V_2\times V(\G_2)\times \cdots \times V(\G_s)\big).
		$$
		By the definition of Kronecker product, $V_1\times V(\G_2)\times \cdots \times V(\G_s)$ and $V_2\times V(\G_2)\times \cdots \times V(\G_s)$ are disjoint sets forming a bipartition of $\G$, and therefore $\G$ is bipartite.
		
		Assume now that $\G$ is bipartite, so $-\rho(\G) \in Spec(\G)$  by (C1).  
		It is well-known that an eigenvalue $\lambda$ of $\G$ is a product $\lambda_{i_1} \cdots \lambda_{i_s}$ where $\lambda_{i_j}$ is some eigenvalue of $\G_j$. In this way, we clearly have 
		$$ 
		\rho(\G)=\rho(\G_1)\cdots \rho(\G_s), 
		$$ 
		which implies that there exists at least one $i\in I_s$ such that $-\rho(\G_i)\in Spec (\G_i)$, as we wanted to show. 
	\end{proof}

	\subsection*{Bipartiteness of the graphs $G_R$ and $\mathcal{G}_{R}$}
	Now, we study bipartiteness of the graphs $G_R$ and $\mathcal{G}_{R}$, by using their Kronecker product decompositions.
	
	In \cite{PV18}, we showed that all connected generalized Paley graphs $\G(k,q)$ are non-bipartite, unless the trivial case $\G(1,2)=K_2$. Since this is important in the study of connectedness and bipartiteness of the graphs $G_{R}$, so we recall this result.
	
	\begin{lem}[\cite{PV18}, Theorem 4.2] \label{lem: GP bipartito}
		Let $q=p^m$ be a fixed prime power with $m \in \N$ and $k \in \N$ with $k\mid q-1$. 
		Then, $\G(k,q)$ is non-bipartite, except when $k=2^m-1$ and $q=2^m$, in which case we have 
		$$ \G(2^m-1,2^m) \simeq K_2 \sqcup \cdots \sqcup K_2 \qquad \text{$(2^{m-1}$ copies$)$}. $$
	\end{lem}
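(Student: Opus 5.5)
The claim is that, for $k \mid q-1$, the graph $\G(k,q)$ is non-bipartite except in the case $q=2^m$, $k=2^m-1$. I will argue directly from the structure of the connection set $R_k=U_k=\{x^k : x\in\ff_q^*\}$, the unique subgroup of $\ff_q^*$ of order $n=\frac{q-1}{k}$. The proof splits according to $n=1$ or $n\ge 2$, and I plan to exhibit odd closed walks, which in a Cayley digraph come from short relations among elements of $U_k$.

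\emph{Case $n=1$.} Here $k=q-1$ and $U_k=\{1\}$. If $p$ is odd, then $\G(q-1,q)$ is the disjoint union of $q/p$ directed $p$-cycles $\{a, a+1,\ldots, a+p-1\}$. These are odd directed cycles, so by (C3') the graph is non-bipartite. If $p=2$, then $1=-1$ and the graph is the perfect matching $x\sim x+1$. This gives $2^{m-1}$ copies of $K_2$, which is precisely the exceptional case.

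\emph{Case $n\ge 2$ with $p$ odd.} In characteristic $p$, adding any $u\in U_k$ to itself $p$ times gives
$$0 \to u \to 2u \to \cdots \to pu = 0,$$
a closed directed walk of odd length $p$. A closed directed walk of odd length always contains an odd directed cycle, so the graph is non-bipartite. This argument works for every $n$ when $p$ is odd.

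\emph{Case $n\ge 2$ with $p=2$.} Now $-1=1$, so the graph is undirected, and I need a triangle or other odd cycle. I would look for $u,v,w\in U_k$ with $u+v+w=0$, which gives the triangle $0,u,u+v$. The key step is to pick $\zeta\in U_k$ with $\zeta\ne 1$ and to find an odd relation among powers of $\zeta$. The sum $1+\zeta+\cdots+\zeta^{n-1}$ vanishes, because it equals $(\zeta^n-1)/(\zeta-1)$. If $n$ is odd, this sum has $n$ terms, all in $U_k$, so walking $0\to 1\to 1+\zeta\to\cdots$ returns to $0$ after $n$ steps. That is an odd closed walk, hence an odd cycle.

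If $n$ is even, no problem arises: $n\mid 2^m-1$ forces $n$ to be odd in characteristic $2$, so this subcase is empty.

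\emph{Main obstacle.} The only delicate point is guaranteeing that odd closed walks yield genuine odd cycles in the undirected (or directed) sense. This is standard: any closed walk of odd length contains an odd cycle. Beyond that, the argument is just the relations $pu=0$ and $\sum_{i<n}\zeta^i=0$. Alternatively, for $n\ge 2$ one could use criterion (C1) with Gauss periods, but the walk argument is cleaner.
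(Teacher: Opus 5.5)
Your proof is correct and complete. It also takes a genuinely different route from the paper, which gives no proof of this lemma at all. The paper imports the lemma from \cite[Theorem 4.2]{PV18}, and the sentence introducing it describes that result as a statement about \emph{connected} GP-graphs (non-bipartite unless $\G(1,2)=K_2$). If the cited result is only the connected version, as that sentence suggests, then the lemma as stated needs an extra step. The lemma also covers disconnected graphs, including the exceptional $\G(2^m-1,2^m)$ itself, so one would additionally need that the components of $\G(k,q)$ are copies of a connected GP-graph over the subfield of $\ff_q$ generated by $U_k$.

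Your argument avoids this entirely. The first ingredient is the directed $p$-cycle $0\to u\to 2u\to\cdots\to pu=0$. As you note, it settles odd $p$ for every $n$, so the split $n=1$ versus $n\ge 2$ only matters for $p=2$. The second ingredient, for $p=2$, is the closed walk $0\to 1\to 1+\zeta\to\cdots$ of odd length $n\mid 2^m-1$, which ends at $(\zeta^n-1)/(\zeta-1)=0$. Both exist regardless of connectivity, and you only use the valid direction ``odd closed walk $\Rightarrow$ odd cycle $\Rightarrow$ non-bipartite''.

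Your argument is also more elementary than the spectral route suggested by the paper's own bipartiteness criterion (C1). That route would require showing that $-n$ is never an eigenvalue, i.e.\ never a Gauss period, outside the exceptional case.

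Finally, you were right not to rely on your first idea of finding a triangle $u+v+w=0$, because triangles need not exist. For example, $\G(3,16)$ is triangle-free: $U_3$ is the group of fifth roots of unity in $\ff_{16}$, and $1+a+b=0$ has no solutions there. Your walk with $\zeta$ of prime order $\ell\mid n$ still gives an odd cycle of length at most $\ell$.
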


	As a direct consequence, we obtain the following result about the bipartiteness of the graphs $G_{R}$'s and $\mathcal{G}_R$'s.
	
	\begin{prop} \label{prop: GR bipartite}
		Let $R$ be a finite commutative ring with identity with Artin's decomposition $R\simeq R_1 \times \cdots \times R_s$ 
		and let $k\in  \N$. Then, we have: 
		\begin{enumerate}[$(a)$]
			\item $G_R(k)$ is bipartite if and only if $G_{R_i}(k)$ is bipartite for some $i \in I_s$. \msk 
			
			\item If further, there is at most one index $j \in I_s$ such that $-1\not \in U_{R_j,k}$, then $\mathcal{G}_R(k)$ is bipartite if and only if  there is some index $i$, such that $\mathcal{G}_{R_i}(k)$ is bipartite. 
		\end{enumerate}
	\end{prop}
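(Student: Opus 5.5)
Both parts follow by combining the Kronecker product decompositions of the previous two sections with the bipartiteness criterion for Kronecker products in Lemma \ref{lem: Kronecker bipartite criterion}.

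For $(a)$, Proposition \ref{prop: Kronecker GRk} gives $G_R(k)\simeq G_{R_1}(k)\otimes\cdots\otimes G_{R_s}(k)$ for every $k$, with no restriction. Bipartiteness is invariant under isomorphism, so $G_R(k)$ is bipartite if and only if this product is.

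The one point needing care is that the factors may be directed, while Lemma \ref{lem: Kronecker bipartite criterion} was phrased for graphs. Its first half is purely combinatorial: if $V_1\sqcup V_2$ bipartitions $V(G_{R_i}(k))$, then pulling back along the $i$-th coordinate gives a bipartition of the product. This works verbatim for arcs, since every arc of the product projects to an arc of $G_{R_i}(k)$.

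For the converse I would avoid the spectral argument in the directed case and use the cycle characterization (C3') instead. Suppose no factor is bipartite. Each $G_{R_j}(k)$ is a Cayley digraph on the abelian group $R_j$, so every arc lies on a directed cycle, and each non-bipartite factor contains an odd closed directed walk. If the factor has odd length $\ell_j$, then so does $\ell_j$ times the walk. Concatenating to common length $L=\prod_j \ell_j$ (odd), the walks in the factors combine coordinatewise into a closed directed walk of odd length $L$ in the product. Hence the product contains an odd directed cycle and is not bipartite. Alternatively, one can note that Cayley digraphs on abelian groups have normal adjacency matrices, so the spectral-radius argument of the lemma goes through unchanged. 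Either route gives $(a)$.

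For $(b)$, the extra hypothesis that at most one $j$ has $-1\notin U_{R_j,k}$ is exactly the hypothesis of Theorem \ref{thm: conditions for mathcalGRk Kronecker}. That theorem gives $\mathcal{G}_R(k)\simeq\bigotimes_i\mathcal{G}_{R_i}(k)$, a product of undirected graphs. Lemma \ref{lem: Kronecker bipartite criterion} then applies directly and yields $(b)$.

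The only step I expect to need real justification is extending Weichsel's criterion to the directed factors in $(a)$. The rest is a direct citation of the decomposition results.
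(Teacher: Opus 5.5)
Your proposal is correct and follows the paper's route. The paper gives no separate proof and treats the proposition as a direct consequence of Proposition \ref{prop: Kronecker GRk}, Theorem \ref{thm: conditions for mathcalGRk Kronecker} and Lemma \ref{lem: Kronecker bipartite criterion}. Your odd closed directed walk argument correctly supplies the directed case in $(a)$, which the paper passes over silently: weak components of a Cayley digraph are strongly connected, so a non-bipartite factor has an odd closed directed walk, and repeating these to a common odd length and combining coordinatewise gives an odd closed walk in the product.

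Drop the alternative remark, though. Normality does not make the eigenvalues real, so the step in the lemma's proof forcing $\lambda_i=\pm\rho(\G_i)$ does not go through unchanged. That route would instead need the Perron--Frobenius description of the eigenvalues of modulus $\rho$ (namely $\rho$ times the $d$-th roots of unity, with $d$ the period), together with the fact that non-bipartite factors have odd period.
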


	With the two previous lemmas we are now in a position to prove that the graphs $G_R(k)$ are non-bipartite unless some of its Kronecker factors is $K_2$ or a disjoint union of $K_{2}$'s.
	
	\goodbreak 
	
	\begin{thm} \label{teo: GR bip}
		Let $R$ be a finite commutative ring with identity with Artin's decomposition $R\simeq R_1 \times \cdots \times R_s$. 
		Let $k\in \N$ be such that $(k,|R|)=1$. Then, we have 
		\begin{enumerate}[$(a)$]
			\item $G_R(k)$ is bipartite if and only if  there is some index $i \in I_s$, such that 
			\begin{equation} \label{eq: GFq K2s}
				G_{\ff_{q_i}}(k) = \G(2^{t_i}-1,2^{t_i}) \simeq K_2 \sqcup \cdots \sqcup K_2 \quad (\text{$2^{t_i-1}$ copies}).	
			\end{equation}	
			
			\item If further, there is at most one index $j \in I_s$ such that $-1\not \in U_{R_j,k}$ then, $\mathcal{G}_R(k)$ is bipartite if and only if  there is some index $i \in I_s$, such that 
			\begin{equation} \label{eq: mathcalGFq K2s}	
				\mathcal{G}_{\ff_{q_i}}(k) = \G(2^{t_i}-1,2^{t_i}) \simeq K_2 \sqcup \cdots \sqcup K_2 \quad (\text{$2^{t_i-1}$ copies}).
			\end{equation}
		\end{enumerate}
		In both items, we have that $q_i=2^{t_i}$ and $(k,q_i-1)=2^{t_i}-1$. 
	\end{thm}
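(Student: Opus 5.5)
The plan is to reduce bipartiteness of $G_R(k)$ and $\mathcal{G}_R(k)$ to that of their Kronecker factors over the residue fields. Those factors are generalized Paley graphs, where Lemma \ref{lem: GP bipartito} says exactly when they are bipartite.

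For $(a)$, I would start from the blow-up form \eqref{eq: blowups for GRk}, namely $G_R(k)\simeq (G_{\ff_{q_1}}(k))^{(m_1)}\otimes\cdots\otimes(G_{\ff_{q_s}}(k))^{(m_s)}$. This is valid because $(k,|R|)=1$ implies $(k,|R_i|)=1$ for each $i$, via Theorem \ref{teo: GRk tensor GRki's caso no local}. By Lemma \ref{lem: Kronecker bipartite criterion}, this product is bipartite if and only if some factor $(G_{\ff_{q_i}}(k))^{(m_i)}$ is bipartite. By Lemma \ref{len: blowup bipartite}, that happens if and only if $G_{\ff_{q_i}}(k)$ is bipartite. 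Since $G_{\ff_{q_i}}(k)=\G(k_i,q_i)$ with $k_i=(k,q_i-1)\mid q_i-1$, Lemma \ref{lem: GP bipartito} then says this holds if and only if $q_i=2^{t_i}$ and $k_i=2^{t_i}-1$. In that case the factor is the disjoint union of $2^{t_i-1}$ copies of $K_2$, which is exactly \eqref{eq: GFq K2s}.

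One technical point needs care: Lemmas \ref{len: blowup bipartite} and \ref{lem: Kronecker bipartite criterion} were proved spectrally for undirected graphs, while $G_R(k)$ may be directed. I would handle this by noting that bipartiteness of a digraph depends only on its underlying graph. Moreover, the underlying graph of a Kronecker product or blow-up is contained in the corresponding product or blow-up of the underlying graphs. For the direction ``a factor is bipartite $\Rightarrow$ the product is bipartite'', the explicit bipartition in the first half of the proof of Lemma \ref{lem: Kronecker bipartite criterion} works verbatim for digraphs. For the converse, I would use the fact (Theorem 3.1 of \cite{Br}, recalled above) that (C1) still characterizes bipartite digraphs. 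I expect this to be the main obstacle. The cleanest fix is to observe that in the exceptional case the factor has $q_i$ even, so $-1=1\in U_{\ff_{q_i},k}$ and the factor is undirected. For the converse, I would instead argue directly with the spectral radius: the Kronecker and blow-up eigenvalue multiplicativity still holds for digraph adjacency matrices, each factor is regular, so $\rho$ equals the degree, and a product equal to $-\rho(\G)$ forces some factor to have eigenvalue $-\rho(\G_i)$.

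For $(b)$, the extra hypothesis lets me invoke Theorem \ref{thm: product decomposition for mathcal GRk} in its blow-up form \eqref{eq: mathcalGR blowups}: $\mathcal{G}_R(k)\simeq(\mathcal{G}_{\ff_{q_1}}(k))^{(m_1)}\otimes\cdots\otimes(\mathcal{G}_{\ff_{q_s}}(k))^{(m_s)}$. All graphs here are undirected, so Lemmas \ref{lem: Kronecker bipartite criterion} and \ref{len: blowup bipartite} apply directly, and $\mathcal{G}_R(k)$ is bipartite if and only if some $\mathcal{G}_{\ff_{q_i}}(k)$ is bipartite. Each $\mathcal{G}_{\ff_{q_i}}(k)$ is itself a GP-graph. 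If $q_i$ is even it equals $G_{\ff_{q_i}}(k)$ by Theorem \ref{teo: Gk and Gk2}$(a)$. If $q_i$ is odd it is $\G(k_i,q_i)$ or $\G(k_i/2,q_i)$ by Theorem \ref{teo: Gk and Gk2}$(b)$ and Corollary \ref{coro: Local case mathcal G}. Lemma \ref{lem: GP bipartito} then says that bipartiteness forces $q_i=2^{t_i}$ and $(k,q_i-1)=2^{t_i}-1$, so $\mathcal{G}_{\ff_{q_i}}(k)=G_{\ff_{q_i}}(k)=\G(2^{t_i}-1,2^{t_i})$, which is \eqref{eq: mathcalGFq K2s}. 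The converse follows from the same chain of equivalences, which also gives the final remark on $q_i$ and $(k,q_i-1)$.
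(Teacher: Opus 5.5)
Your route is the paper's. For $(a)$ the paper writes $G_R(k)\simeq G_{R_{red}}(k)^{(m)}$. It applies Lemma~\ref{len: blowup bipartite} first and then Lemma~\ref{lem: Kronecker bipartite criterion} to $\bigotimes_i\G(k_i,q_i)$; you use the two lemmas in the opposite order, which makes no difference. For $(b)$ the paper goes through Proposition~\ref{prop: GR bipartite} and Corollary~\ref{coro RFq conn}, which amounts to your use of \eqref{eq: mathcalGR blowups}. It then makes the same parity split on $q_i$ and appeals to Lemma~\ref{lem: GP bipartito}.

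The paper applies Lemma~\ref{lem: Kronecker bipartite criterion} to possibly directed factors without comment, so you are right to flag this. However, the last step of your patch is false as stated. For digraphs the eigenvalues are complex. From $\lambda_1\cdots\lambda_s=-\rho_1\cdots\rho_s$ with $|\lambda_i|=\rho_i$, the modulus condition alone does not force some $\lambda_i=-\rho_i$: take $\lambda_1=\sqrt{-1}\,\rho_1$ and $\lambda_2=\sqrt{-1}\,\rho_2$. Regularity only tells you $\rho_i$ equals the degree. Your remark that the exceptional factor is undirected does not help either. In the converse direction the factors you must control are the non-bipartite ones, and those may well be directed, e.g.\ $\G(q_i-1,q_i)$ with $q_i$ odd.

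To close the gap, use that each factor $G_{R_i}(k)$ is a Cayley digraph on a finite group.
\begin{enumerate}[$(1)$]
\item Every arc lies on a directed cycle, since $-u$ is a positive multiple of $u$. Hence the components are strongly connected, and they are pairwise isomorphic.
\item So the components share the spectral radius $\rho_i$ and a common period $d_i$. By Perron--Frobenius, the eigenvalues of modulus $\rho_i$ are exactly $\rho_i\zeta$ with $\zeta^{d_i}=1$.
\item A non-bipartite factor has $d_i$ odd.
\item If all factors are non-bipartite, every eigenvalue of the product of modulus $\rho$ equals $\rho$ times a root of unity of odd order, hence is never $-\rho$.
\item The product is again a Cayley digraph, so (C1) applies to it and shows it is non-bipartite.
\end{enumerate}
Alternatively, argue combinatorially as Weichsel did. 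Each non-bipartite factor, having strongly connected components, contains a directed cycle of odd length $a_i$. The product then contains a closed directed walk of odd length $a_1\cdots a_s$, so it is not bipartite.

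With either repair your argument is complete.
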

	
	\begin{proof}
		($a$) By \eqref{eq: blowups for GRk} we have that 
		$$ G_R(k)=G_{R_{red}}(k)^{(m)},$$ 
		where $m=m_1 \cdots m_s$ in the notation of \eqref{notations R}.  Thus, $G_R(k)$ is bipartite if and only if $G_{R_{red}}(k)$  is bipartite, by Lemma \ref{len: blowup bipartite}.
		But 
		$$ G_{R_{red}}(k) = \G(k_1, q_1) \otimes \cdots \otimes \G(k_s, q_s)$$ 
		where $k_i = (k,q_i-1)$.
		Thus, by Lemma \ref{lem: Kronecker bipartite criterion}, we have that $G_{R_{red}}(k)$ is bipartite if and only if $\G(k_i, q_i)$ is bipartite for some index $i$. Finally, by Lemma \ref{lem: GP bipartito}, $\G(k_i, q_i)$ is bipartite if and only if 
		$\G(k_i, q_i) =\G(2^{m}-1,2^{m})$. By taking into account that $q_i=p^{t_i}$ with $p$ prime, we obtain that $m=t_i$ in this case and so we obtain \eqref{eq: GFq K2s},
		as asserted.
		
		\smallskip
		\noindent ($b$) 
		By Proposition \ref{prop: GR bipartite}, the graph $\mathcal{G}_R(k)$ is bipartite if and only if $\mathcal{G}_{R_i}(k)$ is bipartite for some $i\in I_s$. 
		Since $(k,|R|)=1$, we have $(k,|R_i|)=1$ for any $i\in I_s$, and hence $\mathcal{G}_{R_i}(k)$ is bipartite if and only if $\mathcal{G}_{\ff_{q_i}}(k)$ is bipartite, by Corollary~\ref{coro RFq conn}. 
		By Corollary~\ref{coro: Local case mathcal G}, if $q_i$ is odd then 
		$$ \mathcal{G}_{\ff_{q_i}}(k) = \G(k_i,q_i) \qquad \text{or} \qquad \mathcal{G}_{\ff_{q_i}}(k) = \G(\tfrac{k_i}{2},q_i) $$ 
		and so it can not be bipartite by Lemma~\ref{lem: GP bipartito}. Thus, if $\mathcal{G}_{\ff_{q_i}}(k)$ is bipartite then $q_i$ must be even. Since $\ff_{q_i}$ has characteristic $2$, by Theorem~\ref{teo: Gk and Gk2}, we obtain that $\mathcal{G}_{\ff_{q_i}}(k)=G_{\ff_{q_i}}(k)$ and so is a GP-graph. By Lemma~\ref{lem: GP bipartito}, we obtain that $\mathcal{G}_{\ff_{q_i}}(k)$ is bipartite if and only if 
		$\mathcal{G}_{\ff_{q_i}}(k) = \G(2^{t_i}-1,2^{t_i}) \simeq K_2 \sqcup \cdots \sqcup K_2$ 
		($2^{t_i-1}$ copies), i.e.\@ $q_i=2^{t_i}$ and $(k,q_i-1)=2^{t_i}-1$, as asserted.
	\end{proof}
	
	We now exhibit examples of local rings $R, R'$ such that $G_R(k)$ is bipartite and $G_{R'}(k)$ is not bipartite.
	
	\begin{exam}
		Let $R=\Z_{2^{\alpha}}$ and $R'=\mathbb{Z}_{5^{\beta}}$ with $\alpha, \beta \in \N$.
		In this case, the rings $R$ and $R'$ are both local rings with residue fields $\ff_2$ and $\ff_5$, respectively. 
		Thus, the graph $G_{\Z_{2^{\alpha}}}(k)$ is always a bipartite graph for any $k\in \N$, since $\G(k,2)\simeq K_2$. 
		On the other hand, by Theorem \ref{teo: GR bip}, we have that $G_{\Z_{5^{\beta}}}(k)$ is always a non-bipartite graph for any $k\in \N$, since $\G(k,5)$ is always non-bipartite by Lemma \ref{lem: GP bipartito}. 
		\hfill $\diamond$
	\end{exam}

	\section{Connectedness} \label{sec: connectivity}
	In this final section, we study the connectedness of the graphs $G_R$ and $\mathcal{G}_R$. 
	We will use the spectral radius, and for directed graphs we will also need the periods.

	We recall that a \textit{strongly connected digraph} is a directed graph in which there is a directed path in each direction between any pair of vertices of the graph. When we say connected for a general graph (pseudograph) $G$, we mean connected if $G$ is undirected and strongly connected if $G$ is directed.

	\subsection{Connectedness of Kronecker products of graphs}
	Here, we study the connectedness of the Kronecker product of connected graphs, since in general the Kronecker product of connected graphs is not necessarily connected. We will consider three cases: all the graphs are undirected, all the graphs are directed and the mixed situation.

	\subsubsection*{Product of undirected graphs}
	We begin by studying the connectedness of the Kronecker product of undirected connected graphs. Quite surprisingly, this property depends on the bipartiteness of the factors.
	
	\begin{lem}	\label{lem: G connected sii Gi bipartite}
		Let $\G_1,\ldots,\G_{s}$ be undirected graphs. 
		Then, $\G=\G_{1}\otimes \cdots \otimes \G_{s}$ is connected if and only if 
		\begin{enumerate}[$(a)$]
			\item $\G_1,\ldots,\G_{s}$ are all connected. \sk 
			
			\item $\G_{i}$ is bipartite for at most one $i \in I_s$.
		\end{enumerate}
		
	\end{lem}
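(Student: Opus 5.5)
We will prove this classical result (Weichsel's theorem) by induction on $s$, reducing to the case $s=2$, and handle $s=2$ via walks.

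\textbf{Key case $s=2$.} Two vertices $(u_1,u_2),(w_1,w_2)$ are joined in $\G_1\otimes\G_2$ if and only if there are walks $u_1\to w_1$ in $\G_1$ and $u_2\to w_2$ in $\G_2$ of the \emph{same} length. In a connected undirected graph, walk lengths can be padded by $2$ by going back and forth along an edge. Therefore, for each pair of vertices the set of achievable walk lengths is, from some point on, either all integers of one fixed parity (bipartite case) or all sufficiently large integers (non-bipartite case, using an odd cycle to switch parity).

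\emph{Sufficiency.} If both factors are connected and at most one is bipartite, then for any two target pairs we can always find a common length. Indeed, if one factor is non-bipartite, all large lengths occur there, and so matching whatever parity the other factor requires is possible.

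\emph{Necessity.} If some $\G_i$ is disconnected (and, say, nonempty edge sets are involved), then the projection of any walk stays in one component, so $\G$ is disconnected. A factor with no edges makes $\G$ edgeless; we will treat the trivial case of a single vertex separately. If both factors are bipartite with bipartitions $A_i\cup B_i$, then the parity of $[x_1\in A_1]+[x_2\in A_2]$ is preserved along edges, since both coordinates switch side simultaneously. Hence $\G$ splits into at least two components.

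\textbf{Induction step.} The step uses associativity, $\G=(\G_1\otimes\cdots\otimes\G_{s-1})\otimes\G_s$, together with Lemma \ref{lem: Kronecker bipartite criterion}. By that lemma, the partial product is bipartite if and only if some factor in it is bipartite. Conditions $(a)$ and $(b)$ for all $s$ factors are then equivalent to: the partial product is connected (by the induction hypothesis), the last factor $\G_s$ is connected, and not both the partial product and $\G_s$ are bipartite.

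\textbf{Main obstacle.} The main obstacle is the careful bookkeeping of walk lengths in the $s=2$ case. I would isolate it as a claim: in a connected non-bipartite graph, for every pair of vertices there exist walks of every length $\ge N$ for some $N$. In a connected bipartite graph, there exist walks of every large length of the correct parity and no walks of the other parity. I would also handle degenerate cases (single-vertex graphs, $K_1$) by adopting the convention that such a graph counts as connected but contributes an edgeless product; this may require a small caveat in the statement.
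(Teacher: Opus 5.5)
Your argument is correct, but it takes a different route from the paper's.

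\textbf{The paper's route.} The paper proves the lemma spectrally and treats all $s$ factors at once. The spectrum of $\G_1\otimes\cdots\otimes\G_s$ consists of the products $\lambda_{\G_1}\cdots\lambda_{\G_s}$, so $\rho(\G)=\rho(\G_1)\cdots\rho(\G_s)$. Two bipartite factors $\G_i,\G_j$ give a second factorization using $-\rho(\G_i)$ and $-\rho(\G_j)$, so $\rho(\G)$ is not simple. If all factors are connected and at most one is bipartite, the only factorization is $\rho(\G_1)\cdots\rho(\G_s)$, so $\rho(\G)$ is simple. Connectedness is then read off from Perron--Frobenius.

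\textbf{Your route.} You give Weichsel's combinatorial proof. Walks in the product are pairs of equal-length walks, and padding by $2$ together with an odd cycle determines which lengths occur. The parity of $[x_1\in A_1]+[x_2\in A_2]$ separates two components when both factors are bipartite. The case of $s$ factors then follows by induction via Lemma~\ref{lem: Kronecker bipartite criterion}.

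\textbf{What each buys.} The spectral proof is shorter. It is also built to run in parallel with the directed and mixed cases (Lemmas~\ref{lem: comd strcon kron} and~\ref{lem: D vs U prod}), where periods take the place of bipartiteness. Your proof is elementary and valid for arbitrary undirected graphs. The paper's identification of connectedness with simplicity of $\rho$ really uses regularity. For example, $K_3\sqcup K_1$ is disconnected yet has a simple spectral radius, so the step ``$\G_i$ disconnected $\Rightarrow$ $\rho(\G_i)$ not simple'' fails in general. The converse step ``$\rho(\G)$ simple $\Rightarrow$ $\G$ connected'' needs the strict positivity of the Perron vector $x_1\otimes\cdots\otimes x_s$. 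This is harmless in the paper, which only applies the lemma to (regular) Cayley graphs, and your projection-of-walks argument avoids the issue entirely.

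\textbf{Your caveat on degenerate factors.} It is justified. As stated, the lemma fails for $K_1\otimes K_3$: both factors are connected and only $K_1$ is bipartite, yet the product is three isolated vertices. One should require every factor to have at least one edge, which the paper's proof assumes tacitly.
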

	
	\begin{proof}
		$(\Rightarrow)$ Suppose first that $\G$ is connected and assume that there are  at least two different 
		indices $i,j\in I_s=\{1,\ldots,s\}$ such that $\G_i,\G_j$ are bipartite graphs. 
		Thus, we have that $-\rho(\G_i) \in Spec(\G_i)$ and $-\rho(\G_j)\in Spec(\G_j)$, and hence 
		$$\rho(\G) = \rho(\G_1) \cdots \rho(\G_i) \cdots \rho(\G_j) \cdots \rho(\G_s)= \rho(\G_1) \cdots (-\rho(\G_i))  \cdots (-\rho(\G_j)) \cdots \rho(\G_s),$$
		which implies that the multiplicity of $\rho(\G)$ is greater than $1$ and so $\G$ is disconnected.
		
		In a similar manner, if we assume that there is an index $i$ such that $\G_i$ is not connected, then the spectral radius $\rho(\G_i)$ has multiplicity greater than $1$, and therefore the multiplicity of the spectral radius $\rho(\G_1) \cdots \rho(\G_s)$ of $\G_1\otimes \cdots \otimes \G_s$ is greater than $1$, contradicting the connectedness of $\G$. Therefore, the graphs $\G_1, \ldots, \G_s$ are all connected. 
		
		\smallskip

		$(\Leftarrow)$	 Conversely, assume that $\G_1,\ldots, \G_s$  are all connected and suppose that there is at most one bipartite factor in $\G$. If there is exactly one index $j$ such that $\G_{j}$ is bipartite, then there is a unique way to write $\rho(\G)$ as $\lambda_{\G_1} \cdots \lambda_{\G_s}$ with 
		$\lambda_{\G_i} \in Spec(\G_i)$ for $i\in I_s$, and hence $\G$ is connected.
		Indeed, since $\rho(\G)=\rho(\G_1)\cdots \rho(\G_s)$, 
		if 
		$$ \rho(\G) = \lambda_{\G_1} \cdots \lambda_{\G_s} $$ 
		with $\lambda_{\G_i} \in Spec(\G_i)$ for $i\in I_s$, 
		then we have that $|\lambda_{\G_i}| = \rho(\G_i)$. 
		Since $\G_i$ is undirected, its adjacency matrix is symmetric and so all of its eigenvalues are real numbers, which implies that 
		$$ \lambda_{\G_i} = \pm \rho(\G_i).$$ 
		Since there is exactly one index $j$ such that $\G_j$ is bipartite, that is $-\rho(\G_j) \in Spec(\G_j)$ 
		and $\rho(\G)=\lambda_{\G_1} \cdots \lambda_{\G_s}$, then $\lambda_{\G_i} = \rho(\G_i)$ for all $i\in I_s$ and therefore $\G$ is connected.
		
		Finally, if $\G_i$ is non-bipartite for all $i\in I_s$. 
		A similar argument as above shows that $\G$ is connected. 
		In fact, there is a unique way to write $\rho(\G)$ as $\lambda_{\G_1} \cdots \lambda_{\G_s}$ with $\lambda_{\G_i} \in Spec(\G_i)$ for $i\in I_s$,
		namely $\lambda_{\G_i}=\rho(\G_i)$ since this is the unique eigenvalue of $\G_i$ with norm $\rho(\G_i)$.
	\end{proof}

	For the directed case let us recall the following. 
	Given $D$ a digraph, the \textit{period} or \textit{index of imprimitivity} of $D$, denoted $d=d(D)$, is the greatest common divisor between all the lengths of the directed cycles in $D$,  that is 
	\begin{equation} \label{eq: period of G}
		d=d(G)= \gcd\{ \ell(\vec{C}) : \vec{C} \text{ is a directed cycle in $D$}\}. 
	\end{equation}
	
	It is well-known that the spectrum of $D$, as a set of points in $\C$, is invariant under a rotation $\rho_d$ about the origin by the angle $\frac{2\pi}d$ (see Theorem 2.1 in \cite{Br}), i.e. 
	$$ e^{\frac{2\pi i}d} Spec(D) = \rho_d(Spec(D)) \subseteq Spec(D). $$
	Moreover, the index of imprimitivity is exactly the number of eigenvalues with absolute value equal to its spectral radius (see \cite{AGH} and \cite{HJ}). 
	Notice that for instance, $d$ is even if and only if $D$ is bipartite.

	\subsubsection*{Product of directed graphs}
	Now, we study the connectedness of the Kronecker product of directed connected graphs. Quite surprisingly, it depends on the coprimeness of all the periods of the factors.

	\begin{lem} \label{lem: comd strcon kron}
		Let $D_1,\ldots, D_s$ be directed graphs. 
		Then, 
		$D=D_1\otimes \cdots \otimes D_s$ is strongly connected if and only if 
		\begin{enumerate}[$(a)$]
			\item $D_1,\ldots, D_s$ are all strongly connected. 
			\sk 
			
			\item $\gcd(d(D_j),d(D_{j'}))=1$, for every $0\le j < j' \le s$.
		\end{enumerate}
		
	\end{lem}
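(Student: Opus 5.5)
The plan is to argue combinatorially, using the cyclic structure of strongly connected digraphs rather than the spectral approach used for Lemma~\ref{lem: G connected sii Gi bipartite}. The basic tool is the following standard fact. If $D$ is strongly connected with period $d=d(D)$, then $V(D)$ splits into $d$ cyclic classes $V_0,\ldots,V_{d-1}$, and every arc goes from some $V_r$ to $V_{r+1}$ (indices mod $d$). Moreover, for $u\in V_r$ and $v\in V_{r'}$:
\begin{enumerate}[(i)]
\item every directed walk from $u$ to $v$ has length $\equiv r'-r \pmod d$;
\item there is an $L_0$ such that walks from $u$ to $v$ exist of every length $L\ge L_0$ with $L\equiv r'-r \pmod d$.
\end{enumerate}
Fact (ii) follows from the gcd of the closed-walk lengths at a vertex being $d$ together with the Frobenius coin argument. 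The other basic observation is that, by the definition of $\otimes$, there is a walk of length $L$ from $(u_1,\ldots,u_s)$ to $(v_1,\ldots,v_s)$ in $D$ if and only if, for each $i$, there is a walk of length $L$ from $u_i$ to $v_i$ in $D_i$.

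\emph{Case $s=2$.} For sufficiency, assume $D_1,D_2$ are strongly connected with $(d_1,d_2)=1$. Given $(u_1,u_2)$ and $(v_1,v_2)$, let $a$ and $b$ be the residues prescribed by (i) in $D_1$ and $D_2$ respectively. By the Chinese remainder theorem there are arbitrarily large $L$ with $L\equiv a \pmod{d_1}$ and $L\equiv b \pmod{d_2}$. Then (ii) gives walks of length $L$ in both factors, hence a walk in $D$.

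For necessity, first note that projecting a walk of $D$ to the $i$-th coordinate gives a walk of $D_i$, and the projection $V(D)\to V(D_i)$ is surjective. So if $D$ is strongly connected, each $D_i$ is too. Next suppose $g=(d_1,d_2)>1$. Fix $u_1=v_1$ in $D_1$; then every walk between them has length $\equiv 0 \pmod{d_1}$, hence $\equiv 0 \pmod g$. Pick $u_2\in V_0$ and $v_2\in V_1$ in $D_2$. This is possible because $d_2\ge g\ge 2$, so there are at least two cyclic classes. Every walk from $u_2$ to $v_2$ has length $\equiv 1 \pmod g$. Hence no walk joins $(u_1,u_2)$ to $(v_1,v_2)$, a contradiction.

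\emph{General $s$.} I would compute the period of $D_1\otimes D_2$ when both factors are strongly connected with coprime periods. The lengths of closed walks in the product are exactly the $L$ that are closed-walk lengths in both factors. By (i) and (ii) these are, up to finitely many exceptions, the multiples of $\mathrm{lcm}(d_1,d_2)=d_1d_2$, so $d(D_1\otimes D_2)=d_1d_2$. Sufficiency then follows by induction on $s$ using associativity of $\otimes$. Under pairwise coprimality, $d(D_1\otimes\cdots\otimes D_{s-1})=d_1\cdots d_{s-1}$ is coprime to $d_s$, and the case $s=2$ applies. For necessity with $s$ factors, project $D$ onto the coordinates $j,j'$. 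This projection is surjective on vertices and sends walks to walks, so strong connectivity of $D$ forces $D_j\otimes D_{j'}$ to be strongly connected. The case $s=2$ then gives that each factor is strongly connected and that $(d(D_j),d(D_{j'}))=1$.

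The main obstacle is making facts (i) and (ii), and the period of the product, rigorous, in particular the claim that closed-walk lengths (not only cycle lengths as in \eqref{eq: period of G}) have gcd equal to $d$. I would cite the cyclic-class structure from Perron--Frobenius theory (\cite{Br}, \cite{HJ}) rather than reprove it. Degenerate factors (a single vertex without a loop) should be excluded or treated separately.
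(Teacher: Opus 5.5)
Your proof is correct, but it takes a different route from the paper, which argues spectrally. The paper uses Perron--Frobenius theory for imprimitive matrices. For necessity, a common factor $\ell>1$ of $d_j,d_{j'}$ yields eigenvalues $e^{2\pi i/\ell}\rho(D_j)$ and $e^{2\pi i(\ell-1)/\ell}\rho(D_{j'})$, whose product with the remaining spectral radii is a second occurrence of $\rho(D)$. For sufficiency, it writes $\rho(D)=\lambda_1\cdots\lambda_s$ with $\lambda_j=e^{2\pi i\ell_j/d_j}\rho(D_j)$, shows that $\sum_j \ell_j/d_j\in\mathbb{Z}$ forces $\ell_j=d_j$, and concludes that $\rho(D)$ is simple. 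This treats all $s$ factors at once, without induction, and is uniform with Lemmas~\ref{lem: G connected sii Gi bipartite} and~\ref{lem: D vs U prod}. In your version, the Chinese remainder theorem plays the role of the paper's arithmetic claim, projection onto one or two coordinates replaces the eigenvalue count, and the period formula $d(D_1\otimes D_2)=d_1d_2$ drives an induction. What you gain is that nothing depends on identifying strong connectivity with simplicity of the spectral radius. For general digraphs that identification holds only in the direction ``strongly connected $\Rightarrow$ $\rho$ simple'': a directed $3$-cycle with one extra arc to a sink has $\rho=1$ simple but is not strongly connected. So the paper's steps ``$D_i$ not strongly connected $\Rightarrow$ $\rho(D_i)$ multiple'' and ``$\rho(D)$ simple $\Rightarrow$ $D$ strongly connected'' need extra justification. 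That justification is available for regular digraphs, such as the Cayley digraphs used later. Your argument works for arbitrary digraphs and gives the period of the product as a by-product. The obstacle you flag is mild: every closed walk decomposes into directed cycles, so closed walks and cycles have the same length gcd, and (i)--(ii) are the standard cyclic-class facts. Your caveat about a factor that is a single loopless vertex is apt, since the statement genuinely needs that exclusion.
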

	
	
	\begin{proof}
		For simplicity, we will write $d_j$ for the period $d(D_j)$ of $D_j$.
		
		\noindent ($\Rightarrow$) 
		Assume first that $(d_j,d_{j'})=\ell >1$ for some $j,j'\in I_s$ with $j \ne j'$.
		Notice that, if the spectra of a digraph as a set of points in the complex plane is invariant under a rotation about the origin by the angle $\frac{2\pi}d$, then it is invariant under a rotation by the angle $\frac{2\pi}{f}$ for all $f\mid d$.
		In particular, the spectra of $D_j$ and $D_{j'}$ are invariant under a rotation about the
		origin by the angle $\frac{2\pi}{\ell}$ and so we have that
		$$ \lambda_{\rho_\ell,j} := e^{\frac{2\pi i}{\ell}} \rho(D_j) \in Spec (D_j) 
		\qquad \text{and} \qquad 
		\lambda_{\rho_{\ell}^{-1},j'} := e^{\frac{2\pi i (\ell-1)}{\ell}} \rho(D_{j'}) \in Spec(D_{j'}).$$
		This implies that  
		$$ \lambda_{\rho_\ell,j} \lambda_{\rho_{\ell}^{-1},j'} = \rho(D_j)\rho(D_{j'}).$$ 
		In this way, we see that, 
		$$ \rho(D) = \rho(D_1)\cdots \rho(D_s) $$ 
		has multiplicity $>1$ and, therefore, the directed graph $D$ is not strongly connected.
		
		If we assume that there is an index $i \in I_s$ such that $D_i$ is not strongly connected, then the spectral radius $\rho(D_i)$ has multiplicity greater than $1$, and therefore the multiplicity of the spectral radius $\rho(D_1) \cdots \rho(D_s)$ of $D_1\otimes \cdots \otimes D_s$ is greater than $1$, contradicting the strongly connectedness of $D$. Therefore, the digraphs $D_1, \ldots, D_s$ are all strongly connected.	
		
		\smallskip
		
		\noindent ($\Leftarrow$)
		Conversely, suppose that $D_1,\ldots, D_s$ are all strongly connected, and assume that $(d_j,d_{j'})=1$ for all $j,j'\in I_s$ with $i\neq j$. 
		
		\noindent
		\textit{Claim:} Under the above hypothesis, if $\frac{\ell_1}{d_1}+\cdots+\frac{\ell_s}{d_s}\in \mathbb{Z}$ with $(\ell_1,\ldots,\ell_s)\in I_{d_1}\times \cdots \times I_{d_s}$, then
		$(\ell_1,\ldots,\ell_s)=(d_1,\ldots,d_s)$.
		
		Let $\ell_j \in I_{d_j}$ for $j \in I_s$, such that $\frac{\ell_1}{d_1}+\cdots+\frac{\ell_s}{d_s}\in \mathbb{Z}$.
		If $h=d_1\cdots d_s$, notice that 
		$$ \tfrac{\ell_1}{d_1}+\cdots+\tfrac{\ell_s}{d_s}\in \mathbb{Z} \qquad \Leftrightarrow \qquad 
		h \mid \ell_1 \tfrac{h}{d_1} + \cdots + \ell_s \tfrac{h}{d_s}. $$
		So, using that $d_j \mid h$ for $j\in I_{s}$, we have that
		$$ d_j \mid \ell_1 \tfrac{h}{d_1} + \cdots + \ell_s \tfrac{h}{d_s}. $$ 
		Hence, since $d_j \mid \frac{h}{d_{j'}}$ for all $j \ne j'$, we arrive to $d_j \mid \ell_j \frac{h}{d_j}$.
		By hypothesis $(d_j,d_{j'})=1$ for all $j \ne j'$, this implies that $(d_j,\frac{h}{d_j})=1$ and so $d_j \mid \ell_j$. Therefore, 
		since $\ell_j \le d_j$ we obtain that $\ell_j=d_j$, as claimed. \hfill $\diamond$
		
		Now, it is enough to show that the multiplicity of the eigenvalue $\rho(D)$ is $1$.
		By taking into account that all the eigenvalues of $D$ are product of the eigenvalues of its factors $D_i$, we have that there exist $\lambda_{j} \in Spec(D_i)$ for $j\in I_s$ such that 
		\begin{equation} \label{eq: ro lambda}
			\rho(D) = \lambda_{1} \cdots \lambda_{s}. 
		\end{equation}
		On the other hand, since $\rho(D)=\rho(D_1)\cdots \rho(D_s)$, we arrive to $|\lambda_{j}|=\rho(D_j)$ and so, 
		for each $j \in I_s$ there exist $\ell_{j} \in \{1,2,\ldots,d_j\}$, such that
		$$ \lambda_{j} = e^{2\pi i\frac{\ell_j}{d_j}} \rho(D_j). $$ 
		Hence, the equality \eqref{eq: ro lambda} implies that 
		$$1=e^{2\pi i\frac{\ell_1}{d_1}}\cdots e^{2\pi i\frac{\ell_s}{d_s}}= e^{2\pi i (\frac{\ell_1}{d_1}+\cdots+\frac{\ell_s}{d_s})}.$$
		Thus, $\frac{\ell_1}{d_1}+\cdots+\frac{\ell_s}{d_s}\in \mathbb{Z}$ and, by the above claim, we obtain that $\ell_{j}=d_j$ for all $j\in I_s$. So, we have that 
		$\lambda_{j}=\rho(D_j)$ for all $j\in I_s$. Therefore, the directed graph $D$ is strongly connected, as asserted.
	\end{proof}

	\subsubsection*{Product of directed graphs}
	Finally, we study the connectedness of the Kronecker product between an undirected and a directed connected graphs.
	\begin{lem} \label{lem: D vs U prod}
		Let $\G$ be an undirected graph and let $D$ be a digraph.
		Then, the graph $\G \otimes D$ is directed and we have that $ \G \otimes D$ is strongly connected if and only if
		\begin{enumerate}[$(a)$]
			\item $\G$ is connected and $D$ is strongly connected. 
			\sk 
			
			\item $\G$ or $D$ are non-bipartite.
		\end{enumerate}
	\end{lem}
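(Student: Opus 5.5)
The plan is to reduce to the Perron--Frobenius spectral criterion, exactly as in Lemmas \ref{lem: G connected sii Gi bipartite} and \ref{lem: comd strcon kron}. For a nonnegative matrix $A$ with spectral radius $\rho$, the associated (di)graph is strongly connected iff $A$ is irreducible. For a regular graph, as all graphs in our application are, this is equivalent to $\rho$ being a simple eigenvalue. I will treat $\G$ as a symmetric digraph, so its period is $d(\G)\in\{1,2\}$. It is $2$ exactly when $\G$ is bipartite, and $1$ otherwise, provided $\G$ has an edge, because an edge traversed back and forth is a directed $2$-cycle. The first claim is immediate: $\G\otimes D$ is directed because, as noted in the Introduction, a Kronecker product is directed as soon as one factor is.

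\emph{Necessity.} If $\G$ is disconnected or $D$ is not strongly connected, then $\rho(\G)$ or $\rho(D)$ has multiplicity $>1$. Hence $\rho(\G\otimes D)=\rho(\G)\rho(D)$ has multiplicity $>1$, and $\G\otimes D$ is not strongly connected; this is the same argument as in the two previous lemmas. Now suppose $\G$ and $D$ are both bipartite. Then $-\rho(\G)\in Spec(\G)$ by (C1). Since $d(D)$ is even, rotation invariance of $Spec(D)$ by $2\pi/d(D)$ gives $-\rho(D)\in Spec(D)$. The product $(-\rho(\G))(-\rho(D))=\rho(\G)\rho(D)$ is therefore a second occurrence of the spectral radius, and $\G\otimes D$ is not strongly connected. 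A more combinatorial alternative: if $V_1\cup V_2$ and $W_1\cup W_2$ are bipartitions of $\G$ and $D$, then every arc of $\G\otimes D$ preserves the set $(V_1\times W_1)\cup(V_2\times W_2)$, which is a proper nonempty subset.

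\emph{Sufficiency.} Assume (a) and (b). Write $\rho(\G)\rho(D)=\lambda\mu$ with $\lambda\in Spec(\G)$ and $\mu\in Spec(D)$. Then $|\lambda|=\rho(\G)$ and $|\mu|=\rho(D)$. Since $\G$ is undirected and connected, $\lambda=\pm\rho(\G)$, and the value $-\rho(\G)$ occurs only if $\G$ is bipartite. Write $\mu=e^{2\pi i\ell/d}\rho(D)$ with $d=d(D)$ and $\ell\in I_d$.
\begin{itemize}
\item If $\lambda=\rho(\G)$, then $\mu=\rho(D)$.
\item If $\lambda=-\rho(\G)$, then $\G$ is bipartite, so by (b) $D$ is not, and $d$ is odd. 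Then $e^{2\pi i\ell/d}=-1$ is impossible.
\end{itemize}
Hence the only decomposition is $\rho(\G)\cdot\rho(D)$, and both factors are simple eigenvalues by Perron--Frobenius. So $\rho(\G\otimes D)$ is simple. Equivalently, this is the claim in Lemma \ref{lem: comd strcon kron} with $d_1\in\{1,2\}$, since $\gcd(d(\G),d(D))=1$ iff not both periods are even.

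The main obstacle is making ``simple spectral radius $\Leftrightarrow$ strongly connected'' rigorous for directed graphs. In general this needs regularity, which holds for the Cayley graphs under consideration, or at least the absence of degenerate components. So I would state the lemma for regular (pseudo)graphs with at least one edge. As a safer route for sufficiency, I would give a direct path argument. For each vertex $v$, closed walks in $D$ through $v$ have lengths whose gcd is $d(D)$, so all large multiples of $d(D)$ occur. Walks in the connected graph $\G$ between any two vertices occur in all large lengths of one parity, or in all large lengths if $\G$ is non-bipartite. Using (b), the Chinese remainder theorem then produces a common length realizing walks in both factors simultaneously.
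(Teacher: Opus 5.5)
Your argument is essentially the paper's proof: necessity comes from a second factorization of $\rho(\G)\rho(D)$, and sufficiency from showing that $\rho(\G)\rho(D)=\lambda\mu$ forces $\lambda=\rho(\G)$ and $\mu=\rho(D)$, because $-\rho(\G)$ is an eigenvalue only for bipartite $\G$ and $-1$ is not a $d$-th root of unity for odd $d=d(D)$. Your caveat is correct: the steps ``simple spectral radius $\Rightarrow$ strongly connected'' and ``disconnected $\Rightarrow$ non-simple spectral radius'' need regularity, which the paper assumes tacitly and which holds for the Cayley graphs it applies the lemma to, whereas your Chinese-remainder walk-length argument, with your invariant-set argument and the obvious projection argument for disconnected factors, proves the lemma without that assumption.
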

	
	\begin{proof}
		We know that $\G \otimes D$ is directed. Now, if $\G$ and $D$ are bipartite or, else, if some of it is disconnected, then in the same way as in the proof of 
		Lemma \ref{lem: G connected sii Gi bipartite}, 
		we can show that $\G\otimes D$ is not connected.
		
		For the converse, assume first that $\G$ is non-bipartite. 
		If $\rho(\G)=\lambda\cdot\mu$ with $\lambda\in Spec(\G)$ and $\mu \in Spec(D)$, then
		$|\lambda|=\rho(\G)$ and $|\mu|= \rho(D)$ since 
		$$ \rho(\G\otimes D) = \rho(\G)\rho(D).$$ 
		By taking into account that $\G$ is undirected  (hence real spectrum) and non-bipartite, the only eigenvalue with absolute value equal to $\rho(\G)$
		is $\rho(\G)$ and hence $\lambda=\rho(\G)$, this implies that $\mu\in \mathbb{R}$.
		Finally, since $\rho(\G\otimes D)$ and $\rho(\G)$ are both positive, this implies that $\mu>0$ as well and so $\mu =\rho(D)$.
		Therefore $ \rho(\G\otimes D) $ 
		has multiplicity $1$ (since $\rho(\G)$ and $\rho(D)$ both have multiplicity $1$) 
		and thus $\G\otimes D$ is connected.
		
		A similar argument shows that if $D$ is non-bipartite then $\G\otimes D$ is connected.  
	\end{proof}

	\subsection{Connectedness of the graphs $G_R$ and $\mathcal{G}_{R}$}
	Now we deal with the connectedness of the graphs $G_R$ and $\mathcal{G}_{R}$.
	
	First, we compare the connectedness of $G_R(k)$ with the corresponding one of each $G_{R_i}(k)$ 
	or with $G_{R_{red}}(k)$.


	\begin{prop} \label{prop: GRk connected iff GRki connected}
		Let $R$ be a finite commutative ring with identity with Artin's decomposition $R=R_1\times \cdots \times R_s$. 
		Then, for any $k \in \N$ we have:
		\begin{enumerate}[$(a)$]
			\item $G_R(k)$ is undirected if and only if $G_{R_i}(k)$ is undirected for every $i \in I_s$. \sk 
			
			\item If $G_R(k)$ is  (strongly) connected then $G_{R_i}(k)$ is (strongly) connected for every $i \in I_s$.  
		\end{enumerate}
		If further $(k,|R|)=1$, the following holds: 
		\begin{enumerate}[$(a)$]
			\setcounter{enumi}{2}
			
			\item $G_R(k)$ is (strongly) connected if and only if $G_{R_{red}}(k)$ is (strongly) connected. 
		\end{enumerate}
	\end{prop}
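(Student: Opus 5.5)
The plan is to reduce everything to the Kronecker decomposition of Proposition \ref{prop: Kronecker GRk}, $G_R(k)\simeq G_{R_1}(k)\otimes\cdots\otimes G_{R_s}(k)$, and, under $(k,|R|)=1$, to the blow-up form $G_R(k)\simeq (G_{R_{red}}(k))^{(m)}$ of Theorem \ref{teo: GRk tensor GRki's caso no local}.

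For $(a)$, $G_R(k)$ is undirected exactly when $-1\in U_{R,k}$. By Lemma \ref{lem: GRk undirected conditions} this happens if and only if $-1\in U_{R_i,k}$ for every $i\in I_s$, that is, if and only if each $G_{R_i}(k)$ is undirected. Alternatively, one can argue directly from $U_{R,k}=U_{R_1,k}\times\cdots\times U_{R_s,k}$: this set is symmetric if and only if each factor is symmetric, since all factors are nonempty.

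For $(b)$, I would use the spectral argument from the necessity parts of Lemmas \ref{lem: G connected sii Gi bipartite}, \ref{lem: comd strcon kron} and \ref{lem: D vs U prod}. Each $G_{R_i}(k)$ is regular of degree $|U_{R_i,k}|$, so its spectral radius $\rho_i=|U_{R_i,k}|$ is an eigenvalue. If some factor $G_{R_i}(k)$ is not (strongly) connected, then $\rho_i$ has multiplicity at least $2$. This holds because each strong component of a regular digraph is closed and regular, so it gives its own eigenvector for $\rho_i$; in the undirected case it is the standard fact. Since the spectrum of the product consists of products of eigenvalues with multiplicities multiplied, $\rho(G_R(k))=\prod_i\rho_i$ then has multiplicity at least $2$. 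For a regular digraph, a simple Perron root is equivalent to strong connectivity, so $G_R(k)$ is not (strongly) connected. A purely combinatorial alternative avoids spectra altogether: the coordinate projection $R\to R_i$ maps directed paths in $G_R(k)$ to directed paths in $G_{R_i}(k)$ and is surjective on vertices. Hence a path in $G_R(k)$ between two vertices with prescribed $i$-th coordinates yields a path between those coordinates in $G_{R_i}(k)$. I would present this projection argument, since it is cleaner and handles directed and undirected cases uniformly.

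For $(c)$, assume $(k,|R|)=1$. By Theorem \ref{teo: GRk tensor GRki's caso no local}, $G_R(k)\simeq (G_{R_{red}}(k))^{(m)}$, so it suffices to show that a balanced blow-up $\Gamma^{(m)}$ of a graph $\Gamma$ without loops is (strongly) connected if and only if $\Gamma$ is.

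\begin{itemize}
\item If $\Gamma$ has a directed path $x=x_0\to\cdots\to x_n=y$, then for any $u\in V_x$ and $v\in V_y$, choosing arbitrary representatives in the intermediate classes gives a directed path $u\to\cdots\to v$, because every arc of $\Gamma$ becomes a complete bipartite set of arcs.
\item For $u,u'\in V_x$ in the same class, strong connectivity of $\Gamma$ with at least one arc gives a closed walk through $x$ of positive length. Lifting it from $u$ and ending at $u'$ yields a walk from $u$ to $u'$.
\item Conversely, the quotient map $\Gamma^{(m)}\to\Gamma$ sends walks to walks, so (strong) connectivity descends from $\Gamma^{(m)}$ to $\Gamma$.
\end{itemize}

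The main subtlety is the degenerate case where $\Gamma$ has one vertex and no arcs. Here $R_{red}$ would be trivial, which is excluded since $|R|\ge 2$ as a ring with $1\neq 0$. I would also check the case $G_{R_{red}}(k)$ with $q_i=2$ in passing; no issue arises there, as blow-ups preserve connectivity regardless of bipartiteness.
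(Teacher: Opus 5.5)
Your proof is correct and has the same skeleton as the paper's. Part $(a)$ goes through Lemma \ref{lem: GRk undirected conditions}, part $(b)$ pushes walks in the Kronecker product down to a factor, and part $(c)$ uses the blow-up $G_R(k)\simeq (G_{R_{red}}(k))^{(m)}$ of Theorem \ref{teo: GRk tensor GRki's caso no local}. The differences lie in how the elementary steps are justified.

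In $(b)$, the paper takes a walk from $(0,\ldots,a,\ldots,0)$ to $(0,\ldots,b,\ldots,0)$ and asserts that all coordinates other than the $i$-th stay zero along it. That cannot happen, since every arc adds a unit $x_j^k$ in each coordinate $j$. Your version, which takes arbitrary lifts of $a,b$ and projects the whole walk to $R_i$, is the accurate form of that idea.

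In $(c)$, the paper cites a classical result for ``$\Gamma$ connected $\Rightarrow$ $\Gamma^{(m)}$ connected''. It proves the converse spectrally: $\mathring{K}_m$ has eigenvalues $m$ and $0$, so the principal eigenvalue of $G_R(k)$ is $m$ times that of $G_{R_{red}}(k)$, and simplicity descends. Your lifting and projection argument is self-contained and works verbatim for digraphs. It also makes explicit what the citation hides: two vertices in the same class $V_x$ are joined only through a closed walk of positive length at $x$. This requires $\Gamma$ to have at least two vertices, which holds here because $|R_{red}|\ge 2$. The paper's spectral route is shorter, given the eigenvalue machinery it uses throughout Section \ref{sec: connectivity}.
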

	
	\begin{proof}
		($a$)  This follows immediately from the fact that $(-1,\ldots,-1)\in U_{R}(k)$ 
		if and only if $-1\in U_{R_i}(k)$ for all $i \in I_s$ (see Lemma \ref{lem: GRk undirected conditions}). \sk 
		
		\noindent ($b$) Assume that $G_{R}(k)$ is connected and let $a,b\in R_{i}$. Since $G_{R}(k)$ is connected, then there exist a walk from 
		$$ (0,\ldots,0,a,0,\ldots,0) \quad \text{to} \quad (0,\ldots,0,b,0,\ldots,0),$$ 
		where in all the elements of the walk, all of the coordinates different from $i$ are zero. 
		By definition of Kronecker product, this walk induces a walk from $a$ to $b$ in $G_{R_i}(k)$ and hence $G_{R_i}$ is connected.
		Since $i$ is arbitrary, $G_{R_i}(k)$ is connected for every $i \in I_s$. \sk 
		
		\noindent ($c$) The hypothesis $(k,|R|)=1$ ensures that we can apply Theorem \ref{teo: GRk tensor GRki's caso no local}. By this theorem, we have that $G_{R}(k) \simeq G_{R_{red}}(k)^{(m)}$ where $m$ is the size of the nilradical of $R$.
		It is a classic result that the blow-up of connected graphs is connected (see \cite{Ha}), 
		so if $G_{R_{red}}(k)$ is connected then $G_{R}$ is connected.
		
		Conversely if $G_{R}$ is connected, then its principal eigenvalue (i.e.\@ the regularity degree) $\lambda_R$ has multiplicity $1$. 
		By Theorem \ref{teo: GRk tensor GRki's caso no local} we have that $G_{R}(k)\simeq G_{R_{red}}(k)\otimes \mathring{K}_{m}$. 
		It is well-known that $\mathring{K}_{m}$ has only eigenvalues $0$ and $m$ with multiplicity $m$ and $1$, respectively. 
		Then, by recalling that the eigenvalues of the Kronecker product are the product of the eigenvalues of its factors, 
		we have that 
		$\lambda_R = m \lambda_{R_{red}}$, 
		where $\lambda_{R_{red}}$ is the principal eigenvalue of $G_{R_{red}}(k)$, and so $\lambda_{R_{red}}$ has multiplicity $1$ as well. 
		Hence, $G_{R_{red}}(k)$ is also connected, as asserted.
	\end{proof}
	
	We now study the connectedness of the Kronecker product of GP-graphs $\G(k_i,q_i)$ under certain conditions.
	
	\begin{prop} \label{prop: conn ff kron prod}
		Let $\ff_{q_1},\ldots, \ff_{q_s}$ be finite fields and 
		let $k\in \N$.
		Consider the GP-graphs $\G_i=\G(k_i,q_i)$ for $i\in I_s$  where $k_i=(k,q_i-1)$. 
	
	\begin{enumerate}[$(a)$]
		\item If $\G_i$ is undirected for every $i\in I_s$, then $\G_1 \otimes \cdots \otimes \G_s$ is connected if and only if \sk
		
		\begin{enumerate}[$\circ$]	
			\item  $\frac{q_i-1}{k_i}\dagger q_i-1$ for any $i\in I_s$. \sk
			
			\item  $\G_j \simeq \G(1,2)$ for at most one $j \in I_s$.
		\end{enumerate}
		
		\msk
		
		\item If $\G_i$ is directed for all $i\in I_s$, then $\G_1 \otimes \cdots \otimes \G_s$ is strongly connected if and only if \sk
		
		\begin{enumerate}[$\circ$]
			\item  $\frac{q_i-1}{k_i}\dagger q_i-1$ for any $i\in I_s$. \sk 
			
			\item There are no two $j,j' \in I_s$ such that $\G_j \simeq \G(p-1,p) \simeq \G_{j'}$ with $p$ an odd prime.
		\end{enumerate}
	\end{enumerate}
\end{prop}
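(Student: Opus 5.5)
For part (a) I would apply Lemma~\ref{lem: G connected sii Gi bipartite} to the undirected factors $\G_i=\G(k_i,q_i)$. It says that $\G_1\otimes\cdots\otimes\G_s$ is connected if and only if every $\G_i$ is connected and at most one $\G_i$ is bipartite.

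\begin{itemize}
\item By \eqref{eq: connectivity condition}, $\G_i$ is connected exactly when $\frac{q_i-1}{k_i}\dagger q_i-1$. This gives the first bullet.
\item By Lemma~\ref{lem: GP bipartito}, the only bipartite GP-graphs are $\G(2^m-1,2^m)\simeq K_2\sqcup\cdots\sqcup K_2$ ($2^{m-1}$ copies).
\item Once all factors are connected, such a factor is connected only when $2^{m-1}=1$, that is, when it is $\G(1,2)=K_2$.
\end{itemize}

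Hence, under the connectivity condition, ``at most one bipartite factor'' is the same as ``$\G_j\simeq\G(1,2)$ for at most one $j$''. Both directions of (a) follow at once.

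For part (b) I would use Lemma~\ref{lem: comd strcon kron}: the product is strongly connected if and only if every $\G_i$ is strongly connected and the periods $d(\G_i)$ are pairwise coprime. Strong connectivity of each factor is again the primitive divisor condition. The real work, and the main obstacle, is to compute the period of a strongly connected directed GP-graph $\G(k,q)$ with $k\mid q-1$.

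I claim that $d(\G(k,q))=1$ unless $\G(k,q)=\G(p-1,p)\simeq\vec C_p$, whose period is $p$. I would argue with closed walks, since the gcd of closed walk lengths in a strongly connected digraph equals the gcd of its cycle lengths. Put $n=\frac{q-1}{k}=|U_k|$, where $U_k$ is the subgroup of $\ff_q^*$ of order $n$.

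\begin{itemize}
\item Adding the element $1\in U_k$ $p$ times returns to the start vertex, so there is a closed walk of length $p$.
\item If $n>1$, then $\sum_{u\in U_k}u=0$, since the $n$-th roots of unity sum to zero. Traversing each element of $U_k$ once gives a closed walk of length $n$.
\item Since $n\mid q-1$, we have $(n,p)=1$, so the period is $1$.
\item If $n=1$, then $1\dagger q-1$ forces $q=p$ prime. The graph is then $\G(p-1,p)=\vec C_p$ with period $p$, and it is directed exactly when $p$ is odd.
\end{itemize}

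With this, the only pairs of factors with non-coprime periods are two copies of $\G(p-1,p)$ for the same odd prime $p$. Directed cycles for distinct primes have coprime periods, and all other factors have period $1$. This yields the second bullet of (b). Combining it with Lemma~\ref{lem: comd strcon kron} finishes the proof.
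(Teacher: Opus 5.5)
Your proof is correct. Part (a) follows the paper's argument, combining Lemma~\ref{lem: G connected sii Gi bipartite} with Lemma~\ref{lem: GP bipartito}. You are in fact more careful than the paper's text, which says $\G(1,2)$ is the unique bipartite GP-graph. The accurate statement, which you use, is that it is the unique \emph{connected} bipartite one.

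The real difference is in part (b). The paper gets pairwise coprimality of the periods $d(\G_i)$ by quoting Proposition~7.2 of [PV19], and then applies Lemma~\ref{lem: comd strcon kron}. You instead compute the period directly, by exhibiting two closed walks:
\begin{itemize}
\item one of length $p$, obtained by adding $1$ repeatedly;
\item one of length $n=\frac{q-1}{k}$, obtained by adding each element of $U_k$ once, since the $n$-th roots of unity sum to $0$ for $n>1$.
\end{itemize}
These lengths are coprime because $n\mid q-1$, and you correctly note that closed-walk lengths and cycle lengths have the same gcd.

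Your route makes the proposition self-contained and proves slightly more than is needed: every strongly connected directed GP-graph other than $\G(p-1,p)\simeq\vec{C}_p$ has period $1$. It also makes both directions of the equivalence explicit. Necessity of the second bullet is transparent, since two copies of $\vec{C}_p$ both have period $p$; the paper's written proof argues only sufficiency in both (a) and (b). The paper's route is shorter but rests on an outside reference.
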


\begin{proof}
	Recall that the condition $\frac{q_i-1}{k_i}\dagger q_i-1$ is equivalent to state that $\G(k_i,q_i)$ is (strongly) connected for all $i \in I_s$. 
	Let 
	$$ \G = \G_1 \otimes \cdots \otimes \G_s . $$ 
	
	\noindent ($a$) 
	Assume first that $\G_i$ are undirected for all $i\in I_s$. 
	By Lemma \ref{lem: G connected sii Gi bipartite}, the graph $\G$ is connected if and only if there is at most one index $j \in I_{s}$ such that $\G_j$ is bipartite.
	By Lemma \ref{lem: GP bipartito}, the unique bipartite generalized Paley graph is $\G(1,2)$, since there is no two indices $j,j'$ such that $\G_j \simeq \G(1,2) \simeq \G_{j'}$, we obtain that $\G$ is connected.

	\noindent ($b$) 
	Now, assume that $\G_i$ are directed for all $i\in I_s$. 
	Since there is no two indices $j,j'$ such that $\G_j \simeq \G(p-1,p) \simeq \G_{j'}$ for some prime $p$,
	Proposition~7.2 from \cite{PV19} implies that all of the periods $d_i$ of 
	$\G_i$'s with $i \in {I}_{s}$ are pairwise coprime, that is $(d_j,d_{j'})=1$ for all $j,j' \in {I}_s$. 
	Therefore, Lemma \ref{lem: comd strcon kron} 
	implies that  $\G$ is strongly connected, as asserted.	
\end{proof}

As a direct consequence of previous lemmas and the above proposition, we obtain the following result giving necessary and sufficient conditions for the connectedness of $G_R(k)$ for $R$ a finite commutative ring.

\begin{thm} \label{teo: GR connected}
	Let $R$ be a finite commutative ring with identity with Artin's decomposition $R=R_1\times \cdots \times R_s$ and 
	let $k\in \N$. If $(k,|R|)=1$, then $G_{R}(k)$ is (strongly) connected if and only if 
	\begin{enumerate}[$(a)$]
		\item $\frac{q_i-1}{k_i}\dagger q_i-1$, where $k_i = (k,q_i-1)$ for all $i \in I_s$. \sk
		
		\item There are no two indices $i,j\in I_s$ such that 
		$$ G_{R_{i}/\frak m_i}(k) \simeq \G(p-1,p) \simeq G_{R_{j}/\frak m_j}(k) $$ 
		for some prime $p$.
	\end{enumerate}
\end{thm}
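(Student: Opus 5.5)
The plan is to reduce everything to the reduced ring and then to Kronecker products of GP-graphs. Since $(k,|R|)=1$, Proposition \ref{prop: GRk connected iff GRki connected}$(c)$ shows that $G_R(k)$ is (strongly) connected if and only if $G_{R_{red}}(k)$ is. By Proposition \ref{Nonlocal case red}, $G_{R_{red}}(k)\simeq \G_1\otimes\cdots\otimes\G_s$ with $\G_i=\G(k_i,q_i)=G_{R_i/\frak m_i}(k)$. By \eqref{eq: connectivity condition}, condition $(a)$ says exactly that every $\G_i$ is connected. Necessity of $(a)$ then follows from Lemmas \ref{lem: G connected sii Gi bipartite}, \ref{lem: comd strcon kron} and \ref{lem: D vs U prod}, which all require each factor to be (strongly) connected. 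Alternatively, it follows from Proposition \ref{prop: GRk connected iff GRki connected}$(b)$ combined with Corollary \ref{-1RFq}$(c)$.

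Next I split the factors into the undirected ones $U$ (those with $-1\in U_{\ff_{q_i},k}$) and the directed ones $D$. I then write the product as $\G_U\otimes \G_D$ and apply the three lemmas in turn.

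For $\G_U$, Lemma \ref{lem: G connected sii Gi bipartite} and Lemma \ref{lem: GP bipartito} show that $\G_U$ is connected if and only if at most one factor is bipartite. The only connected bipartite GP-graph is $\G(1,2)=K_2=\G(p-1,p)$ with $p=2$. So two such factors are exactly what $(b)$ forbids with $p=2$.

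For $\G_D$, Proposition \ref{prop: conn ff kron prod}$(b)$, which rests on Proposition~7.2 of \cite{PV19}, gives strong connectedness if and only if no two directed factors equal $\G(p-1,p)$ for odd $p$. Here $\G(p-1,p)=\vec C_p$ has period $p$, and the other connected directed GP-graphs have coprime periods.

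The mixed step is to combine $\G_U$ and $\G_D$ via Lemma \ref{lem: D vs U prod}, which requires $\G_U$ or $\G_D$ to be non-bipartite. The factor $\G_D$ is non-bipartite, since a directed connected GP-graph has odd $q$ and so is non-bipartite by Lemma \ref{lem: GP bipartito}. Hence the mixed product is strongly connected exactly when both parts are. Factors equal to $\G(p-1,p)$ with different primes $p$, or with one undirected ($p=2$) and one directed, cause no trouble, which matches $(b)$ requiring the same prime for both indices.

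I expect the main obstacle to be justifying the periods in the directed case, that is, that two directed connected GP-graphs have coprime periods unless both are $\vec C_p$ for the same $p$. I would rely on Proposition~7.2 of \cite{PV19} as the paper does. The converse direction also needs care: two copies of $\vec C_p$ both have period $p$, so Lemma \ref{lem: comd strcon kron} forces disconnection. Since the definition of Kronecker product puts the product in $\G_D$ regardless of $\G_U$, this failure persists in the full product, and Lemma \ref{lem: D vs U prod}$(a)$ then gives disconnection of $\G_U\otimes\G_D$.
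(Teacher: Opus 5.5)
Your proposal is correct and follows essentially the same route as the paper. You reduce to $G_{R_{red}}(k)$ via Proposition~\ref{prop: GRk connected iff GRki connected}$(c)$, split the GP-factors into an undirected product $\G_U$ and a directed product $\G_D$, and treat these with Lemma~\ref{lem: G connected sii Gi bipartite}, Lemma~\ref{lem: GP bipartito} and Proposition~\ref{prop: conn ff kron prod} (resting on Proposition~7.2 of \cite{PV19}), then glue them with Lemma~\ref{lem: D vs U prod}; the one step stated loosely is the non-bipartiteness of the whole product $\G_D$, which, as in the paper, needs Lemma~\ref{lem: Kronecker bipartite criterion} on top of Lemma~\ref{lem: GP bipartito} applied factorwise.
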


\begin{proof}
	\noindent $(\Leftarrow)$ Suppose that there are no two indices $i,j$ such that $\G(k_i,q_i)\simeq \G(p-1,p) \simeq \G(k_j,q_j)$ for some prime $p$ and assume that $\frac{q_i-1}{k_i} \dagger q_i-1$ where $k_i = (k,q_i-1)$ for all $i \in I_s$.
	
	By item ($c$) of Proposition \ref{prop: GRk connected iff GRki connected}, 
	it is enough to see that $G_{R_{red}}(k)$ is connected (strongly connected).
	
	Notice that we can separate the indices in $I_s$ into the following disjoint subsets:
	\begin{align*}
		&  \vec{I}_{s} = \{i\in I_{s}: \, k_i\nmid \tfrac{q_i-1}{2} \, \text{with $q_i$ odd}\}, \\[1mm]
		&  \bar{I}_{s} = \{j\in I_{s}: \, k_j\mid \tfrac{q_j-1}{2}\}\cup \{j\in I_{s}: \, \text{$q_j$ is even}\},
	\end{align*}	
	and thus we can consider the following graphs
	$$ \G_{D}(k) = \bigotimes_{i\in \vec{I}_{s}} \G(k_i,q_i) \qquad \text{and} \qquad \G_{U}(k) = \bigotimes_{j\in \bar{I}_{s}} \G(k_j,q_j).$$
	
	By item ($a$) of Proposition \ref{prop: GRk connected iff GRki connected}, we have that $\G_{D}$ is directed and $\G_{U}$ is undirected (when the corresponding set of indices is non-empty), moreover we have that
	$$G_{R_{red}}(k)= \G_{D}(k) \otimes \G_{U}(k).$$
	By Proposition \ref{prop: conn ff kron prod} we obtain that $\G_{U}(k)$ is connected and $\G_{D}(k)$ is strongly connected.
	
	Thus, by Lemmas \ref{lem: Kronecker bipartite criterion} and \ref{lem: GP bipartito}, the digraph $\G_{D}(k)$ is always non-bipartite.
	Hence, Lemma~\ref{lem: D vs U prod} implies that $G_{R_{red}}(k)$ is connected (strongly connected).
	
	\smallskip
	
	\noindent $(\Rightarrow)$ 
	If $G_R(k)$ is (strongly) connected, then $G_{R_{red}}(k)$ is (strongly) connected and, since $G_{R_{red}}(k) \simeq \G(k_1,q_1) \otimes \cdots \otimes \G(k_s,q_s)$, then $\frac{q_i-1}{k_i}\dagger q_i-1$ for all $i\in I_s$ by Proposition \ref{prop: conn ff kron prod}.

	On the other hand, if there exist $i,j$ such that 
	$\G(k_i,q_i) \simeq \G(p-1,p) \simeq \G(k_j,q_j)$,  
	then $\G(k_i,q_i), \G(k_j,q_j)$ are both bipartite when $p=2$ or else both have the same index of imprimitivity $p$. 
	In any case, we have that the Kronecker product 
	$$ \G(k_i,q_i)\otimes \G(k_j,q_j)$$ 
	is not connected (not strongly connected) by Lemmas \ref{lem: G connected sii Gi bipartite} and \ref{lem: comd strcon kron}. 
	Therefore, we have that $G_{R_{red}}(k)$ is not connected and hence $G_{R}(k)$ is not connected, as it was to be shown.
\end{proof}

\begin{rem}
	For any prime $p>2$ the graph $\G(p-1,p)$ is the directed $p$-cycle $\vec{C}_{p}$.
\end{rem}

As a direct consequence, we obtain the following theorem that states the equivalence between the connectedness of $G_R$ and $\mathcal{G}_{R}$.

\begin{thm} \label{teo: GR sim conn}
	Let $R$ be a finite commutative ring with identity with Artin's decomposition $R=R_1\times \cdots \times R_s$. Let $k\in \N$ with $(k,|R|)=1$ such that $\frac{q_i-1}{k_i}\dagger q_i-1$ where $k_i = (k,q_i-1)$ for all $i \in I_s$.
	If there is at most one $j \in I_s$ such that $-1 \notin U_{R_j,k}$ then 
	$$ \mathcal{G}_{R}(k) \text{ is connected} \qquad \Leftrightarrow \qquad G_{R}(k) \text{ is (strongly) connected}. $$
\end{thm}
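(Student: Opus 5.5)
Under the hypotheses, I plan to reduce both graphs to the reduced ring $R_{red}=\ff_{q_1}\times\cdots\times\ff_{q_s}$. Then I will show that the two connectedness criteria become the same arithmetic condition: there are no two indices $i\neq j$ with $q_i=q_j=2$.

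\emph{Reduction to $R_{red}$.} For $G_R(k)$, Theorem~\ref{teo: GR connected} applies directly, since hypothesis $(a)$ there is assumed. So $G_R(k)$ is (strongly) connected if and only if condition $(b)$ of that theorem holds. For $\mathcal{G}_R(k)$, the assumption that $-1\notin U_{R_j,k}$ for at most one $j$ lets me use Theorem~\ref{thm: product decomposition for mathcal GRk}, which gives
$$\mathcal{G}_R(k)\simeq\Big(\bigotimes_{i=1}^s\mathcal{G}_{\ff_{q_i}}(k)\Big)^{(m)}.$$
A balanced blow-up is connected if and only if the base graph is; this is the argument of Proposition~\ref{prop: GRk connected iff GRki connected}$(c)$ via the spectrum of $\mathring{K}_m$. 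Hence $\mathcal{G}_R(k)$ is connected if and only if $\bigotimes_i\mathcal{G}_{\ff_{q_i}}(k)$ is connected.

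\emph{The factors.} By Corollary~\ref{-1RFq}, $-1\in U_{R_i,k}$ if and only if $-1\in U_{\ff_{q_i},k}$. For every index with $-1\in U_{\ff_{q_i},k}$ we have $\mathcal{G}_{\ff_{q_i}}(k)=\G(k_i,q_i)$, which is undirected and connected by the hypothesis $\frac{q_i-1}{k_i}\dagger q_i-1$. For the possible exceptional index $j$, $q_j$ is odd. Then $\mathcal{G}_{\ff_{q_j}}(k)$ contains $\G(k_j,q_j)$ as a spanning subgraph, because $T\supseteq U$. That digraph is strongly connected, so $\mathcal{G}_{\ff_{q_j}}(k)$ is connected. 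It is also non-bipartite: by Corollary~\ref{coro: Local case mathcal G} it is the GP-graph $\G(k_j/2,q_j)$ with $q_j$ odd, and Lemma~\ref{lem: GP bipartito} applies.

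By Lemma~\ref{lem: G connected sii Gi bipartite}, the product is connected if and only if at most one factor is bipartite. By Lemma~\ref{lem: GP bipartito}, a bipartite factor must have $q_i=2^t$ and $k_i=2^t-1$. Then the degree is $n=1$, and $1\dagger 2^t-1$ forces $t=1$, so the factor is $\G(1,2)=K_2$. Hence $\mathcal{G}_R(k)$ is connected if and only if $q_i=2$ for at most one $i$.

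\emph{Comparison with $G_R(k)$.} Condition $(b)$ of Theorem~\ref{teo: GR connected} forbids two indices with $G_{\ff_{q_i}}(k)\simeq\G(p-1,p)\simeq G_{\ff_{q_j}}(k)$. For odd $p$, $\G(p-1,p)=\vec{C}_p$ is directed, so $-1\notin U_{\ff_{q_i},k}$ and $-1\notin U_{\ff_{q_j},k}$. By Corollary~\ref{-1RFq} this means $-1\notin U_{R_i,k}$ and $-1\notin U_{R_j,k}$, which contradicts our hypothesis. So the only case that can actually occur is $p=2$, i.e.\ two indices with $q_i=q_j=2$. Thus both sides of the equivalence are equivalent to the same condition, and the theorem follows.

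The step I expect to need the most care is the exceptional factor. There I must justify that the symmetrized GP-graph is connected and non-bipartite without invoking $(\frac{k_j}{2},q_j)$ primitivity directly. The spanning-subgraph argument handles connectedness, and Corollary~\ref{coro: Local case mathcal G} together with Lemma~\ref{lem: GP bipartito} handles non-bipartiteness.
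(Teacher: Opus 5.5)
Your argument is correct, but it is organized differently from the paper's proof.

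\textbf{The paper's route.} It never passes to $R_{red}$ on the symmetrized side. After the trivial case $-1\in U_{R_i,k}$ for all $i$, it assumes $-1\notin U_{R_s,k}$. It then uses $\mathcal{G}_{R_1\times\cdots\times R_{s-1}}(k)=G_{R_1\times\cdots\times R_{s-1}}(k)$ and the two-factor splitting $\mathcal{G}_R(k)=\mathcal{G}_{R_1\times\cdots\times R_{s-1}}(k)\otimes\mathcal{G}_{R_s}(k)$, and proves the two implications separately:
\begin{itemize}
\item from $G_R(k)$ to $\mathcal{G}_R(k)$, via the inclusion $\G(k_s,q_s)\subseteq\G(\frac{k_s}{2},q_s)$ and Lemma~\ref{lem: G connected sii Gi bipartite};
\item the converse, by checking the hypotheses of Theorem~\ref{teo: GR connected}. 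Here strong connectivity of $\G(k_s,q_s)$ is recovered from $\G(\frac{k_s}{2},q_s)=\overset{_{\rightarrow}}{\G}(k_s,q_s)\cup\overset{_{\leftarrow}}{\G}(k_s,q_s)$.
\end{itemize}

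\textbf{Your route.} You reduce both sides to one explicit condition: no two indices with $q_i=q_j=2$. The $-1$ hypothesis is exactly what rules out the odd-$p$ case of condition $(b)$ of Theorem~\ref{teo: GR connected}. This is symmetric and avoids the reversed-digraph identity. It also recovers the criterion of Corollary~\ref{coro: GR sim conn} along the way. The cost is that it relies on the full decomposition theorems, and on the primitive-divisor hypothesis in both directions.

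\textbf{A shortcut for both.} Both routes are much heavier than necessary. $G_R(k)$ is strongly connected iff the additive submonoid generated by $U_{R,k}$ is all of $R$. In a finite group this submonoid is already the subgroup $\langle U_{R,k}\rangle=\langle U_{R,k}\cup(-U_{R,k})\rangle$, since $-u=(\mathrm{ord}(u)-1)u$. That subgroup equals $R$ exactly when $\mathcal{G}_R(k)$ is connected. So the equivalence holds for every finite commutative $R$ and every $k$, with none of the stated hypotheses.
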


\begin{proof}
	Notice that if $-1 \in U_{R_i,k}$ for any $i \in I_s$, then $-1\in U_{R,k}$ and so $U_{R,k}=-U_{R,k}$ which implies that $U_{R,k}=T_{R,k}$. Therefore, we obtain that $G_{R}(k)=\mathcal{G}_{R}(k)$, which clearly implies the statement of the theorem. 
	
	Thus, let us assume that there exist one index such that $-1\not \in U_{R_j,k}$. Without loss of generality we can assume that $-1\not \in U_{R_s,k}$. 
	Hence, we have that $-1 \in U_{R_i,k}$ for any $i=1,\ldots, s-1$, which implies that $U_{R_1\times \cdots \times R_{s-1},k}=T_{R_1\times \cdots \times R_{s-1},k}$ and therefore
	$$
	\mathcal{G}_{R_1\times \cdots \times R_{s-1}}(k)=G_{R_1\times \cdots \times R_{s-1}}(k).
	$$
	On the other hand, since $(k,|R|)=1$, then we have that $(k,|R_s|)=1$. Thus, by ($a$) in Corollary~\ref{-1RFq} we have that 
	$-1 \not \in U_{R_s}$ if and only if $-1 \not \in U_{\ff_{q_s}}$, where $\ff_{q_s} \simeq R_{s}/ \frak{m}_s$. Therefore,  $q_s$ must be necessarily odd in this case. Since $R_s$ is of odd order, by Corollary~\ref{coro: Local case mathcal G} we have that
	$$
	\mathcal{G}_{R_s}(k)=\G(\tfrac{k_s}{2},q_s)^{(m_s)},
	$$
	which implies that $\mathcal{G}_{R_s}(k)$ cannot be a bipartite graph by Corollary \ref{coro RFq conn} and Lemma \ref{lem: GP bipartito}.
	
	\smallskip	
	
	$\bullet$ If $G_{R}(k)$ is connected, then $G_{R_1\times \cdots \times R_{s-1}}(k)$ and $G_{R_s}(k)$ are connected, which implies that $\mathcal{G}_{R_1\times \cdots \times R_{s-1}}(k)$ and $\G(k_s,q_s)$ are connected. Since $\G(k_s,q_s)$ is a subgraph of $\G(\frac{k_s}{2},q_s)$ then $\G(\frac{k_s}{2},q_s)$ is connected as well. Since $R_s$ has odd characteristic and $(k,|R_s|)=1$, by Corollary \ref{coro: Local case mathcal G} we have 
	$\mathcal{G}_{R_s}(k) \simeq \G(\tfrac{k_s}{2},q_s)^{(m_s)}$, 
	which implies that $\mathcal{G}_{R_s}(k)$ is connected.  By the above paragraph, the graph $\mathcal{G}_{R_s}$ is non-bipartite and so by Lemma \ref{lem: G connected sii Gi bipartite} we obtain that 
	$$ \mathcal{G}_{R_1 \times \cdots \times R_{s-1}}(k) \otimes \mathcal{G}_{R_s}(k) $$ 
	is connected and therefore $\mathcal{G}_{R}(k)$ is connected, as desired.
	
	\smallskip
	
	$\bullet$ If $\mathcal{G}_{R}(k)$ is connected then $\mathcal{G}_{R_1\times \cdots \times R_{s-1}}(k)$ and $\mathcal{G}_{R_s}(k)$ are connected. 
	Thus, we obtain that $G_{R_1\times \cdots \times R_{s-1}}$ is an undirected connected graph, then $G_{R_i}$ is connected and undirected for any $i=1,\ldots,s-1$, and hence $\tfrac{q_i-1}{k_i}\dagger q_i-1$ for any $i=1,\ldots,s-1$. Then, for any $q_i$ odd we have $G_{\ff_{q_i}}(k) \simeq \G(q_i-1,q_i)$, since $\G(q_i-1,q_i)$ is directed. On the other hand, there is at most one index $i\in \{1,\ldots,s-1\}$ such that $G_{\ff_{q_i}}(k)\simeq \G(1,2)$ for $i\in \{1,\ldots,s-1\}$ by Lemma \ref{prop: GR bipartite}.
	Since $\mathcal{G}_{R_s}(k)$ is connected and $-1\not \in U_{R_s,k}$ then $\G(\tfrac{k_s}{2},q_s)$ is connected. By taking into account that (see Theorem 3.5 from \cite{PV19})
	$$
	\G(\tfrac{k_s}{2},q_s) = \overset{_{\rightarrow}}{\G}(k_s,q_s) \cup \overset{_{\leftarrow}}{\G}(k_s,q_s) 
	$$ 
	we obtain that the graph $\G(k_s,q_s)$ is a strongly connected digraph, which implies that $\tfrac{q_s-1}{k_s}\dagger q_s-1$.
	Moreover, since $q_s$ is odd we have that $G_{R_s}(k)$ is directed and so cannot be equal to $\G(1,2)$. Therefore, 
	$G_{R_1 \times \cdots \times R_{s}}(k)$ is connected by Theorem \ref{teo: GR connected}.
\end{proof}

As a direct consequence, we obtain the following characterization of the connectedness of $\mathcal{G}_{R}(k)$.

\begin{coro} \label{coro: GR sim conn}
	Let $R$ be a finite commutative ring with identity with Artin's decomposition $R=R_1\times \cdots \times R_s$ 
	and let $k\in \N$ with $(k,|R|)=1$. If there is at most one  $j \in I_s$ such that $-1 \notin U_{R_j,k}$ then $\mathcal{G}_{R}(k)$ is connected if and only if 
	\begin{enumerate}[$(a)$]
		\item $\frac{q_i-1}{k_i} \dagger q_i-1$, where $k_i = (k,q_i-1)$ for all $i \in I_s$. \sk 
		
		\item There are no two indices $i,j\in I_s$ such that $\mathcal{G}_{R_{i}/\frak m_i}(k)\simeq \G(1,2) \simeq \mathcal{G}_{R_{j}/\frak m_j}(k)$.
	\end{enumerate}
\end{coro}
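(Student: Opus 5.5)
The plan is to obtain the corollary by combining Theorem \ref{teo: GR sim conn} with Theorem \ref{teo: GR connected}, after rewriting condition (b) of Theorem \ref{teo: GR connected} in terms of the symmetrized residue graphs.

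Under the hypotheses $(k,|R|)=1$ and ``at most one $j$ with $-1\notin U_{R_j,k}$'', I would first argue that condition $(a)$ is necessary for $\mathcal{G}_R(k)$ to be connected. By Theorem \ref{thm: conditions for mathcalGRk Kronecker}, $\mathcal{G}_R(k)\simeq\bigotimes_i\mathcal{G}_{R_i}(k)$, and by Lemma \ref{lem: G connected sii Gi bipartite} each factor $\mathcal{G}_{R_i}(k)$ must then be connected. By Corollary \ref{coro RFq conn} this means each $\mathcal{G}_{\ff_{q_i}}(k)$ is connected. Now split according to whether $-1\in U_{\ff_{q_i},k}$ (using Corollary \ref{-1RFq}$(a)$):
\begin{enumerate}[$\circ$]
\item If $-1\in U_{\ff_{q_i},k}$, then $\mathcal{G}_{\ff_{q_i}}(k)=\G(k_i,q_i)$, and connectedness gives $\frac{q_i-1}{k_i}\dagger q_i-1$ by \eqref{eq: connectivity condition}.
\item If $-1\notin U_{\ff_{q_i},k}$, then $q_i$ is odd and, by Theorem \ref{teo: Gk and Gk2}$(c)$, $\mathcal{G}_{\ff_{q_i}}(k)=\G(k_i/2,q_i)$ is the union of $\G(k_i,q_i)$ and its reverse. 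Connectedness of this union yields strong connectedness of the circulant-type digraph $\G(k_i,q_i)$. Here I would use that in a Cayley digraph on a finite group, weak and strong connectedness coincide, since the connection set generates the same subgroup as a semigroup and as a group. This is the argument already used in the proof of Theorem \ref{teo: GR sim conn}, and it again gives $\frac{q_i-1}{k_i}\dagger q_i-1$.
\end{enumerate}
Hence, in both directions of the corollary we may assume $(a)$. Once $(a)$ holds, Theorem \ref{teo: GR sim conn} applies and says that $\mathcal{G}_R(k)$ is connected if and only if $G_R(k)$ is (strongly) connected. By Theorem \ref{teo: GR connected}, this holds if and only if no two indices $i,j$ satisfy $G_{\ff_{q_i}}(k)\simeq\G(p-1,p)\simeq G_{\ff_{q_j}}(k)$.

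The main step, and the one I expect to be the real obstacle, is to show that under our hypothesis this last condition is equivalent to $(b)$. For $p=2$, $G_{\ff_{2}}(k)=\G(1,2)=\mathcal{G}_{\ff_2}(k)$, so the two conditions agree. For odd $p$, $\G(p-1,p)=\vec C_p$ has $-1\notin U_{\ff_p,k}$. By Corollary \ref{-1RFq}$(a)$ such an index $i$ has $-1\notin U_{R_i,k}$, and by hypothesis there is at most one such index, so a pair $i\ne j$ with both graphs equal to $\vec C_p$ cannot occur. Conversely, $\mathcal{G}_{\ff_{q_i}}(k)\simeq\G(1,2)=K_2$ forces $q_i=2$, hence $G_{\ff_{q_i}}(k)=K_2=\G(1,2)$. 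Thus the condition of Theorem \ref{teo: GR connected}$(b)$ reduces exactly to $(b)$ of the corollary, which finishes the proof. The care needed is in the $-1$ bookkeeping, which ensures that the odd-$p$ case is vacuous and that the $p=2$ case matches in both directions.
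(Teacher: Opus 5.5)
Your proof is correct and follows essentially the same route as the paper. Both combine Theorem~\ref{teo: GR sim conn} with Theorem~\ref{teo: GR connected} and translate the $\G(p-1,p)$ condition into $(b)$ through the $-1$ bookkeeping; the paper only differs in getting necessity of $(b)$ more directly, from two bipartite factors in the Kronecker decomposition via Lemma~\ref{lem: G connected sii Gi bipartite}. Your explicit proof that $(a)$ is necessary, and your uniform treatment of the case with exactly one index $j$ such that $-1\notin U_{R_j,k}$, fill in points that the paper's proof leaves implicit or dismisses as ``a similar argument''.
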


\begin{proof}
	By Theorem \ref{thm: conditions for mathcalGRk Kronecker} we have that 
	$$
	\mathcal{G}_{R}(k)=\mathcal{G}_{R_1}(k)\otimes \cdots \otimes \mathcal{G}_{R_s}(k).
	$$
	If there are two indices such that $\mathcal{G}_{R_{i}/\frak m_i}(k)\simeq \G(1,2) \simeq \mathcal{G}_{R_{j}/\frak m_j}(k)$, then $\mathcal{G}_{R_{i}}(k)$ and $\mathcal{G}_{R_{j}}(k)$ are bipartite and hence $\mathcal{G}_{R}(k)$ is disconnected by Lemma \ref{lem: G connected sii Gi bipartite}.
	
	Now, assume that there are no two indices $i,j\in I_s$ such that $\mathcal{G}_{R_{i}/\frak m_i}(k)\simeq \G(1,2) \simeq \mathcal{G}_{R_{j}/\frak m_j}(k)$. 
	If $-1\in U_{R_i,k}$ for any $i\in I_s$, then $\mathcal{G}_{R_{i}}(k)=G_{R_i}(k)$ for any $i\in I_s$, in particular the graphs $G_{R_i}(k)$'s are all undirected, and so the graphs $G_{R_i/\frak{m}_i}(k)$'s cannot be isomorphic to $\G(p-1,p)$ with $p$ odd. By taking into account that 
	$$ \mathcal{G}_{R_{i}/\frak m_i}(k) = G_{R_{i}/\frak m_i}(k)$$ 
	for any $i\in I_s$, so there are no two indices $i,j\in I_s$ such that $G_{R_{i}/\frak m_i}(k)\simeq \G(1,2) \simeq G_{R_{j}/\frak m_j}(k)$ which implies that $G_{R}(k)$ is connected by Theorem \ref{teo: GR connected}. Therefore, by Theorem~\ref{teo: GR sim conn} we obtain that $\mathcal{G}_{R}(k)$ is connected, as asserted.
	
	A similar argument allow us to obtain that $\mathcal{G}_{R}(k)$ is connected, if there is exactly one index $i$ such that $-1\not\in U_{R_i,k}$.
\end{proof}

We finish the paper with an application of the last results.

\begin{exam}
	Let $R$ be a finite commutative ring with $|R|$ odd and Artin's decomposition $R=R_1\times \cdots \times R_s$.
	If $k\in \N$ with $(k,|R|)=1$ such that $-1\in U_{R_i}$ for any $i\in I_s$, then $G_{R}(k)=\mathcal{G}_{R}(k)$. 
	Is this graph connected?
	Since $-1\in U_{R_i,k}$ we have that $-1\in U_{R_i/\frak{m}_i,k}$, by Lemma \ref{lem: condition for -1 in URk, R local}. 
	Now, by taking into account that $U_{\ff_q,q-1}=\{1\}$, we obtain that 
	$$ G_{R_i/\frak{m}_i}(k) \not \simeq \G(q_i-1,q_i) $$ 
	for all $i\in I_s$. Therefore, the graph $G_{R}(k) = \mathcal{G}_{R}(k)$ is connected by Theorem \ref{teo: GR sim conn}.
	\hfill $\diamond$
\end{exam}


\section*{Final remarks}
We have studied basic structural properties of the graphs $G_R(k)=Cay(R, U_k)$ and $\mathcal{G}_R(k)=Cay(R, U_k \cup -U_k)$, with  $U_k=\{x^k : x \in R^*\}$ where $R^*$ is the group of units of $R$, for $R$ a finite commutative ring with identity. 
In particular, we gave necessary and sufficient conditions for directedness, bipartiteness and connectedness of these graphs. Also, we have given some decompositions for these graphs: blow-up decompositions when $R$ is local, and Kronecker product decompositions in the general case (i.e., $R$ not necessarily local). Moreover, we have a related reduction of the graphs with the reduction of the rings.

In a forthcoming work, we will provide the connected components of these graphs, in the case when they are not connected, and we will study spectral properties such as the spectrum and energy (as well as isospectral and equienergetic problems).

Another interesting question is to study the primeness of these graphs (see Remark \ref{rem: primeness}) 
The problem of characterizing when $G_R(k)$ is prime for any finite commutative ring for a general integer $k$ is still open, and our characterization of connected $G_R(k)$ is a first step in this direction. The primeness of $\mathcal{G}_R(k)$ will probably follow from the one of the $G_R$'s, due to the Kronecker product decompositions and Theorem \ref{teo: Gk and Gk2}.

\goodbreak
\subsection*{Acknowledgements}
The first author wishes to thank the Guangdong Technion Israel Institute of Technology (GTIIT) in Shantou, China, for its hospitality, facilities, and friendly atmosphere, during his academic visit on winter 2025, where this work was almost finished.

\end{document}